\renewcommand{\epsilon}{\varepsilon}
\renewcommand{\phi}{\varphi}
\DeclareMathOperator{\Gal}{Gal}
\DeclareMathOperator{\Div}{Div}
\renewcommand{\div}{\mathrm{div}}
\newcommand{\floor}[1]{\left\lfloor #1 \right\rfloor}
\newcommand{\msim}{{\sim_{\emme}}}
\newcommand{\Gm}{\mathbb{G}_\mathrm{m}}
\newcommand{\Ga}{\mathbb{G}_\mathrm{a}}
\newcommand{\dplus}{\Delta_+}
\newcommand{\dminus}{\Delta_-}
 \newcommand{\emme}{{\mathfrak{m}}}
\renewcommand{\P}{\mathbb{P}}
\newcommand{\Q}{\mathbb{Q}}
\newcommand{\C}{\mathbb{C}}
\newcommand{\Cv}{\mathcal{C}}
\newcommand{\sub}{\subseteq}
\newcommand{\dif}{\setminus}
\newtheorem{thm}{Theorem}[section]
\newtheorem*{teo}{Theorem}
\newtheorem{prop}[thm]{Proposition}
\newtheorem{lem}[thm]{Lemma}
\newtheorem{cor}[thm]{Corollary}
\newtheorem{example}[thm]{Example}
\newtheorem{remark}[thm]{Remark}
\title{Hyperelliptic continued fractions in the singular case of genus zero}
\author{Francesco Ballini}
\address[F. Ballini]{Mathematical Institute, University of Oxford, Woodstock Road OX2 6GG, Oxford, United Kingdom}
\email{ballini@maths.ox.ac.uk}
\author{Francesco Veneziano}
\address[F. Veneziano]{Department of mathematics, University of Genova, Via Dodecaneso~35, 16146 Ge\-no\-va, Italy}
\email{veneziano@dima.unige.it}
\date{\today}
\subjclass[2010]{11A55, 11J70, 40A15, 14G99, 14K99}
\keywords{Continued Fractions, Generalized Jacobians}
\begin{document}
\begin{abstract}
    It is possible to define a continued fraction expansion of elements in a function field of a  curve by expanding as a Laurent series in a local parameter. Considering the square root of a polynomial $\sqrt{D(t)}$ leads to an interesting theory related to polynomial Pell equations. Unlike the classical Pell equation, the corresponding polynomial equation is not always solvable and its solvability is related to arithmetic conditions on the Jacobian (or generalized Jacobian) of the curve  defined by $y^2=D(t)$. In this setting, it has been shown by Zannier in \cite{zannier} that the sequence of the degrees of the partial quotients of the continued fraction expansion of $\sqrt{D(t)}$ is always periodic, even when the expansion itself is not.
    
    In this article we work out in detail the case in which the curve $y^2=D(t)$ has genus 0, establishing explicit geometric conditions corresponding to the appearance of partial quotients of certain degrees in the continued fraction expansion.
    
    We also show that there are non-trivial polynomials $D(t)$ with non-periodic expansions such that infinitely many partial quotients have degree greater than one.
\end{abstract}
\maketitle

\section{Introduction}

\subsection{Continued fraction expansion in the ring of Laurent series}
Let us fix $\C$ as ground field and let us consider the field $\C((t^{-1}))$ of formal Laurent series in $t^{-1}$, whose elements are expressions of the form
\[
f(t)=\sum_{n=-\infty}^{n_0} c_n t^{n}.
\]

We define the polynomial part of the series $f(t)$ to be
\[
\floor{f}=\sum_{n=0}^{n_0} c_{n}t^n\in\C[t].
\]
Using the same iteration which defines the simple continued fraction expansion of real numbers we can define a continued fraction expansion for elements of $\C((t^{-1}))$ as follows:
define $\alpha_0=f$ and for all $n\geq 0$ set 
\begin{align*}
    a_n&=\floor{\alpha_n},\\
    \alpha_{n+1}&=(\alpha_n-a_n)^{-1}.
\end{align*}
Just as in the classical case, we also define recursively
    \begin{equation}
    \begin{cases}
    p_0=1, \quad p_1=a_0, \\
    p_{n+1}=a_np_n+p_{n-1} \  &\forall\, n\ge 1; \\
    q_0=0, \quad q_1=1, \\ 
    q_{n+1}=a_nq_n+q_{n-1} \ &\forall\, n\ge 1.
    \end{cases}
    \end{equation}
With these definitions $a_n,p_n,q_n$ are polynomials in $t$, and the convergents $p_n(t)/q_n(t)$ satisfy properties of best approximation with respect to the valuation of $\C((t^{-1}))$ similar to the ones seen in the classical case. We quote here a result relevant to our computations.

\begin{prop}[\cite{PT00}*{Proposition 2.1}]\label{Pro:ConvergentCondition}
Let $f\in \C(\!(t^{-1})\!)$ be a Laurent series in the variable $t^{-1}$, and let $p(t)$ and $q(t)$ be coprime polynomials. Then  $p(t)-q(t) f(t) =  O(t^{-\deg q-1})$ holds if and only if $p/q$ is a convergent of the continued fraction expansion of $f$.
\end{prop}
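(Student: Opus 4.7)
The plan is to mirror the classical proof for real continued fractions, but with the absolute value replaced by the $t^{-1}$-adic valuation $v$ on $\C(\!(t^{-1})\!)$ (so $v(t^n) = -n$, and for a polynomial $v(g) = -\deg g$).

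For the forward direction, I would first establish the standard auxiliary identities by induction on $n$: namely
\[
p_{n+1} q_n - p_n q_{n+1} = (-1)^n, \qquad f \;=\; \frac{\alpha_n p_n + p_{n-1}}{\alpha_n q_n + q_{n-1}},
\]
where $\alpha_n$ is the complete quotient with $\lfloor \alpha_n\rfloor = a_n$. Substituting the second into $p_n - q_n f$ and using the first gives the key formula
\[
p_n - q_n f \;=\; \frac{\pm 1}{\alpha_n q_n + q_{n-1}}.
\]
For $n\geq 1$ the polynomial part of $\alpha_n$ is $a_n$ with $\deg a_n \ge 1$, so $\deg(\alpha_n q_n) = \deg a_n + \deg q_n > \deg q_{n-1}$, and therefore $v(\alpha_n q_n + q_{n-1}) = -(\deg a_n + \deg q_n) = -\deg q_{n+1}$. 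Hence $v(p_n - q_n f) = \deg q_{n+1} \geq \deg q_n + 1$, which is precisely the desired approximation bound.

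For the converse, assume $p,q$ are coprime with $v(p - qf) \geq \deg q + 1$. Choose $n$ with $\deg q_n \leq \deg q < \deg q_{n+1}$ (possible because the degrees $\deg q_k$ form a strictly increasing sequence of nonnegative integers, at least until the expansion terminates; the terminating case is immediate). Consider the \emph{polynomial} identity
\[
p\,q_n - p_n\,q \;=\; q_n\bigl(p - q f\bigr) \;-\; q\bigl(p_n - q_n f\bigr).
\]
By the forward computation and our choice of $n$, the valuations of the two terms on the right are
\[
v\bigl(q_n(p-qf)\bigr) \geq \deg q + 1 - \deg q_n > 0, \qquad v\bigl(q(p_n - q_n f)\bigr) = \deg q_{n+1} - \deg q > 0.
\]
So the right-hand side, regarded as a Laurent series, has strictly positive valuation; since the left-hand side is a polynomial, it must vanish. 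Thus $p q_n = p_n q$, and coprimality on both sides forces $p/q = p_n/q_n$, i.e.\ $p/q$ is a convergent.

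The only subtle point I expect is bookkeeping around the non-standard indexing used here (where $p_1/q_1 = a_0/1$) and the degenerate cases: $n=0$, where $\alpha_0 = f$, and the case where the continued fraction terminates (i.e.\ $f$ is already a rational function), which must be excluded when writing $\alpha_n^{-1}$. Handling those edge cases carefully, rather than any deeper conceptual difficulty, is the main thing to watch; the rest is linear algebra in $\C(\!(t^{-1})\!)$ combined with the elementary fact that a Laurent series of positive valuation which is also a polynomial is zero.
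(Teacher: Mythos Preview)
Your argument is correct; both directions go through exactly as in the classical theory once the archimedean absolute value is replaced by the $t^{-1}$-adic one, and your handling of the indexing and of the terminating case is adequate.

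Note, however, that in the paper this proposition is only \emph{quoted} from \cite{PT00} and not reproved at that point. The paper does later give its own argument for essentially the converse implication (in the special case $f=\sqrt{D(t)}$) as Lemma~\ref{lem:pre3}, and that argument is genuinely different from yours. Instead of the determinant identity $p\,q_n-p_n\,q = q_n(p-qf)-q(p_n-q_nf)$, the paper writes $q$ uniquely in the ``mixed-radix'' form $q=\sum_i r_i(t)\,q_i(t)$ with $\deg r_i<\deg a_i$ and then reads off, from the pattern of vanishing of the coefficients of $t^{-j}$ in $q_i\sqrt{D}$, that at most one $r_i$ can be nonzero. Your route is shorter and closer to the classical best-approximation proof; the paper's route in Lemma~\ref{lem:pre3} yields a little more, namely the non-coprime conclusion $(p,q)=(r\,p_n,r\,q_n)$ together with the inequality $\deg a_n\ge l$, which is what is actually used in the later geometric arguments.
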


It is an interesting topic to investigate the properties of this continued fraction expansion and compare them with the classical theory of simple continued fractions of real numbers.

A classical theorem of Lagrange states that the real numbers with an eventually periodic expansion are precisely the quadratic irrational numbers. Furthermore, the periodic expansion of $\sqrt{m}$ is strongly related to the fundamental solution of the Pell equation
\[
x^2-my^2=1.
\]

In the function field case the situation is more complicated, but some of these features remain true.

Let $D(t)$ be a polynomial in $\C[t]$ and let us consider the equation
\begin{equation}\label{PolyPell}
x(t)^2-D(t)y(t)^2=1
\end{equation}
to be solved in non-zero polynomials $x(t),y(t)\in\C[t]$. We assume that $D$ is not a square in $\C[t]$, and that it has even degree $2d>0$.

Unlike the classical case, it is not true in general that the polynomial Pell equation \eqref{PolyPell} is always solvable. When this is the case, we say that the polynomial $D(t)$ is \emph{Pellian}.

For example, Euler (\cite{Eu1767}) noticed the identity
\begin{equation}\label{Eqn:EulersPolynomialPellIdentity}
(2t^2+1)^2 - (t^2+1) (2t)^2 = 1
\end{equation}
and more in general  if $T_k$ and $U_k$ denote the Chebyshev polynomials of the first and second kind, the relation
$$T_k(t)^2 -(t^2-1)U_{k-1}(t)^2=1$$
holds.

On the other hand the polynomial $(t^2-1)(t-2)^2$ is not Pellian, as we will see later (see also \cite{ChapterAlpbach}*{Appendix} for some conditions implying non-pellianity).

\medskip

A link between Pell equations and the continued fraction expansion of square roots exists also in the function field case; if we consider $\sqrt{D(t)}$ as a series in $\C((t^{-1}))$, the following theorem of Abel characterizes the pellianity of $D(t)$ in terms of its continued fraction expansion.

\begin{thm}[Abel, 1826]
Let $D(t)\in\C[t]$ be a poynomial of even degree and not a square. Then $D(t)$ is Pellian if and only if the continued fraction expansion of $\sqrt{D(t)}$ is eventually periodic.
\end{thm}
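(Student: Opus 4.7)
The plan is to prove both implications by exploiting the algebraic identity
\[
p_n^2 - D q_n^2 = (p_n - q_n\sqrt{D})(p_n + q_n\sqrt{D})
\]
evaluated at convergents of $\sqrt{D}$. By Proposition~\ref{Pro:ConvergentCondition} the first factor is $O(t^{-\deg q_n - 1})$ while the second has degree $\deg q_n + d$, so $p_n^2 - D q_n^2$ is a polynomial in $\C[t]$ of degree at most $d - 1$ for every $n$. Abel's theorem asserts that this bounded-degree polynomial reduces to a nonzero constant for some $n$ precisely when the expansion is eventually periodic, and both directions of the proof rest on making this reduction explicit.

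For the direction $\Leftarrow$, let $(p,q)$ be a Pell solution with $\deg q$ minimal. From $p^2 - Dq^2 = 1$ we read $p - q\sqrt{D} = (p + q\sqrt{D})^{-1} = O(t^{-\deg q - d})$, so Proposition~\ref{Pro:ConvergentCondition} identifies $(p,q)$ with some convergent $(p_N, q_N)$. Writing the complete quotients as $\alpha_n = (\sqrt{D} + P_n)/Q_n$ with $P_n, Q_n \in \C[t]$, solving the identity $\sqrt{D} = (p_n \alpha_{n+1} + p_{n-1})/(q_n \alpha_{n+1} + q_{n-1})$ for $\alpha_{N+1}$ and substituting $p_N^2 - Dq_N^2 = 1$ yields $\alpha_{N+1} = \epsilon\sqrt{D} + (\text{polynomial})$ for some sign $\epsilon = \pm 1$; in particular $Q_{N+1}$ is a nonzero constant. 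A short computation then gives $\alpha_{N+2} = \alpha_1$ (possibly after doubling the period to absorb the sign $\epsilon$), so the sequences $(\alpha_n)$ and hence $(a_n)$ are eventually periodic.

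For the direction $\Rightarrow$, assume the expansion is eventually periodic: $\alpha_{n_0+k} = \alpha_{n_0}$ for some $n_0 \geq 0$ and minimal $k \geq 1$. Introduce the matrices $M_n = \begin{pmatrix} a_n & 1 \\ 1 & 0 \end{pmatrix}$; the recursion $\alpha_n = a_n + 1/\alpha_{n+1}$ is the M\"obius action of $M_n$, so the product $A = M_{n_0} M_{n_0+1} \cdots M_{n_0+k-1}$ fixes $\alpha_{n_0}$ and satisfies $\det A = (-1)^k$. The fixed-point equation for $A$ is a quadratic over $\C[t]$ for $\alpha_{n_0}$; comparing it with the minimal polynomial $Q_{n_0}^2 X^2 - 2 P_{n_0} Q_{n_0} X + P_{n_0}^2 - D$ of $\alpha_{n_0}$ over $\C(t)$ and equating the discriminants produces a polynomial identity of the form $X^2 - D Y^2 = c$ with $c \in \C^{\times}$, where $X$ comes from the trace of $A$ and $Y$ from the off-diagonal entries. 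Rescaling $(X,Y)$ by $1/\sqrt{c}$ delivers a genuine solution of \eqref{PolyPell}.

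The principal technical obstacles are, in the $\Leftarrow$ direction, the sign and parity bookkeeping that decide whether the period equals $N+1$ or doubles to $2(N+1)$; and, in the $\Rightarrow$ direction, verifying that the quantity $Y$ extracted from the matrix $A$ really lies in $\C[t]$ rather than merely $\C(t)$, which will use the coprimality of $P_{n_0}$ and $Q_{n_0}$ inherent to the continued fraction construction.
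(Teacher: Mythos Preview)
The paper does not give a proof of this statement at all: Abel's theorem is quoted as a classical result, with attribution and date, and then immediately used. There is therefore no argument in the paper to compare your proposal against.

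As for the proposal itself, the strategy is the standard one and is essentially sound, but you have interchanged the labels $\Leftarrow$ and $\Rightarrow$. In the paragraph you call ``$\Leftarrow$'' you begin by assuming a Pell solution exists, which is the hypothesis of the forward implication (Pellian $\Rightarrow$ eventually periodic); conversely, the paragraph you call ``$\Rightarrow$'' assumes eventual periodicity and deduces a Pell solution. The mathematical content of each paragraph is appropriate to the direction it actually proves.

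Two small points worth tightening. First, in the Pellian $\Rightarrow$ periodic direction, the identity you need is $p_n^2 - Dq_n^2 = (-1)^{n}Q_{n+1}$ (or its variant, depending on indexing conventions), which gives $Q_{N+1}\in\C^\times$ directly from the Pell relation; from this the equality $\alpha_{N+2}=\alpha_1$ (or $\alpha_{2N+2}=\alpha_1$ after doubling) follows by the usual recursion for the $P_n,Q_n$. Second, in the periodic $\Rightarrow$ Pellian direction, your concern that $Y\in\C[t]$ is legitimate: one extracts $Y=\lambda Q_{n_0}$ from the proportionality between the fixed-point quadratic of $A$ and the minimal polynomial of $\alpha_{n_0}$, and integrality of $Y$ follows from $\lambda Q_{n_0}^2\in\C[t]$ and $\lambda P_{n_0}Q_{n_0}\in\C[t]$ together with $\gcd(P_{n_0},Q_{n_0})=1$, exactly as you anticipate.
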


It is a remarkable fact, also considering how recently it has been discovered, that some periodic structure in the sequence of the partial quotients survives even for non-Pellian polynomials. In \cite{zannier} Zannier proved the following theorem:

\begin{thm}[\cite{zannier}*{Theorem 1.1}]
Let $D(t)\in\C[t]$ be a polynomial of even degree and not a square.  Then the sequence of the degrees of the partial quotients in the continued fraction expansion of $\sqrt{D(t)}$ is eventually periodic.
\end{thm}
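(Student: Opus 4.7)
The plan is to encode each step of the continued fraction algorithm as a translation on a semiabelian variety attached to $C : y^2 = D(t)$, and then deduce periodicity of the degree sequence from a Mordell--Lang-type theorem. Since $\deg D = 2d$ is even, the smooth projective model $C$ has two distinct points at infinity $\infty_+$ and $\infty_-$; I would work with the generalized Jacobian $J_{\emme}$ of $C$ with modulus $\emme = \infty_+ + \infty_-$, which is a semiabelian variety fitting into an exact sequence
\[
0 \longrightarrow \Gm \longrightarrow J_{\emme} \longrightarrow J \longrightarrow 0,
\]
where $J$ is the ordinary Jacobian of $C$.

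To each convergent $p_n/q_n$ I would attach a class $\delta_n \in J_{\emme}$ built from the divisor of $p_n - q_n y$ on $C$. By Proposition \ref{Pro:ConvergentCondition} this function has a zero of order at least $\deg q_n + 1$ at $\infty_+$ and a pole of order $\deg q_n + d$ at $\infty_-$, giving
\[
\operatorname{div}(p_n - q_n y) = E_n + (\deg q_n + 1 + e_n)\,\infty_+ - (\deg q_n + d)\,\infty_-
\]
for an effective affine divisor $E_n$ and an excess $e_n \ge 0$ which vanishes at a generic convergent and is linearly related to $\deg a_{n+1}$. After a suitable rigidification at $\emme$ that uses the $\Gm$-factor of $J_{\emme}$ and not merely the image in $J$, one should obtain the translation identity
\[
\delta_{n+1} = \delta_n + \sigma
\]
for a single element $\sigma \in J_{\emme}$ depending only on $D$ (morally, the class of $\infty_+ - \infty_-$ in the generalized Jacobian). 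Consequently $\delta_n = \delta_0 + n\sigma$ lies in a translate of the finitely generated subgroup $\Gamma = \Z\sigma \sub J_{\emme}(\C)$.

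The condition $\deg a_n \ge k$ can in turn be expressed as a Riemann--Roch-type vanishing condition on $\delta_n$, so the loci $V_k = \{\delta \in J_{\emme} : \deg a(\delta) \ge k\}$ form a descending chain of Zariski-closed subvarieties of $J_{\emme}$. Applying the Mordell--Lang theorem for semiabelian varieties to each $V_k$, the intersection $\Gamma \cap (V_k - \delta_0)$ is a finite union of cosets of subgroups of $\Gamma$, i.e.\ a finite union of arithmetic progressions in $\Z$. Since $\deg a_n$ takes only finitely many values (it is bounded by $d$, since $\deg E_n = d - 1 - e_n \ge 0$ forces $e_n \le d-1$), intersecting finitely many such conditions yields the eventual periodicity of the sequence $(\deg a_n)_n$.

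The main obstacle is verifying the translation relation $\delta_{n+1} - \delta_n = \sigma$ in $J_{\emme}$. This is exactly where the generalized rather than the ordinary Jacobian is indispensable: the class of $E_n$ in $J$ alone shifts by amounts depending on the fluctuating excess $e_n$, whereas the $\Gm$-factor in $J_{\emme}$ absorbs this fluctuation through the multiplicative rigidification at $\infty_\pm$. Making this precise requires a careful bookkeeping of leading coefficients in the expansion of $p_n - q_n \sqrt{D}$ in powers of $t^{-1}$, together with a proof that the $V_k$ are genuine algebraic subvarieties (and not merely constructible sets), so that the Mordell--Lang conclusion applies as stated.
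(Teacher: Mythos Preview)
The paper does not itself prove this theorem; it is quoted from \cite{zannier}. However, the paper's genus-zero analysis (Sections~3--6) faithfully reflects Zannier's strategy, so one can compare your proposal against that.

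Your proposal has two structural problems that prevent it from going through.

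\textbf{Wrong modulus.} You take $\emme=\infty_{+}+\infty_{-}$, but then the divisors $\operatorname{div}(p_n-q_n y)$ are supported precisely at $\infty_{\pm}$ and are not elements of $\Div_{\emme}^{0}$ at all; you cannot form the classes $\delta_n$ you describe. In the paper (and in \cite{zannier}) the modulus is placed at the preimages on the normalized curve of the \emph{multiple roots} of $D$, while $\infty_{\pm}$ are kept outside the modulus. The fixed element is then $\delta=[(\infty_{-})-(\infty_{+})]\in J_{\emme}$, and the convergent $p_n/q_n$ is encoded not by a new class $\delta_n$ but by the condition $\delta^{\,m}\in \overline{W_l}$ with $m=\deg p_n$; see Lemma~\ref{lem:pre2} and Theorem~\ref{thm:thmpartialquotwl}.

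\textbf{The translation relation is false as stated.} You claim $\delta_{n+1}-\delta_n=\sigma$ with $\sigma$ independent of $n$, indexed by the convergent number $n$. But $\deg p_{n+1}-\deg p_n=\deg a_n$, which is exactly the quantity whose periodicity you are trying to prove; any class built from $\operatorname{div}(p_n-q_n y)$ moves by a \emph{variable} multiple of $\delta$ as $n\to n+1$. The one-dimensional $\Gm$-factor of your $J_{\emme}$ cannot ``absorb'' this, since the fluctuation lives in the abelian-variety quotient $J$, not in the linear part. What is actually true, and what the paper uses, is that the map $m\mapsto m\cdot\delta$ (indexed by $\deg p_n$, not by $n$) is a genuine arithmetic progression in $J_{\emme}$, and the degree of the partial quotient is read off from which stratum $\overline{W_l}\setminus\overline{W_{l-1}}$ the point $m\cdot\delta$ lands in. Periodicity then follows from a Skolem--Mahler--Lech/Mordell--Lang statement applied to each $\overline{W_l}$ (Theorem~\ref{thm:smlz} in the paper), after which one checks that the resulting arithmetic-progression structure on the set $\{m:\delta^{m}\in\overline{W_l}\}$ forces periodicity of the degree pattern.
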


The methods used to prove this theorem are geometric in nature. 
Just as the theory of the classical Pell equations leads to the study of the field $\Q(\sqrt{d})$, the study of the polynomial Pell equations leads naturally to the field $\C(\Cv)$, where $\Cv$ is the projective curve defined by the equation $y^2=D(t)$.

If we assume that the polynomial $D$ is squarefree, the pellianity of $D$ can be related to the arithmetic of the Jacobian $J$ of the curve $\Cv$. Denoting by $\infty_1$ and $\infty_2$ the two points at infinity of the curve $\Cv$, we have the following classical result.
\begin{thm}[Chebyshev]
Let $D(t)\in\C[t]$ be a squarefree polynomial of even degree and not a square. Then $D$ is Pellian if and only if $(\infty_1)-(\infty_2)$ is torsion in $J$. 
\end{thm}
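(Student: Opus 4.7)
The plan is to exploit the hyperelliptic involution $\sigma\colon\Cv\to\Cv$, $(t,y)\mapsto(t,-y)$, which swaps the two points at infinity, in order to translate the polynomial Pell equation into a statement about principal divisors on $\Cv$. Throughout, let $\eta$ denote the function on $\Cv$ with $\eta^2=D(t)$, so that $\sigma^*\eta=-\eta$; I reserve $y(t)\in\C[t]$ for the polynomial appearing in the Pell equation.

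\emph{Forward direction.} Given a non-trivial solution $x(t)^2-D(t)y(t)^2=1$, set $f:=x(t)-y(t)\eta\in\C(\Cv)^*$. The identity $f\cdot\sigma^*f=x^2-Dy^2=1$ shows that both $f$ and $f^{-1}$ are regular on the affine part $\Cv\setminus\{\infty_1,\infty_2\}$, so $\div(f)$ is supported at infinity. Being of degree zero, it must have the form $\div(f)=a\bigl((\infty_1)-(\infty_2)\bigr)$ for some $a\in\Z$. A short computation with the Laurent expansion of $\eta$ in $1/t$ at $\infty_1$ gives the order of vanishing of $f$ there and shows $a\neq 0$, so $(\infty_1)-(\infty_2)$ is torsion in $J$.

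\emph{Reverse direction.} Suppose $N\bigl((\infty_1)-(\infty_2)\bigr)$ is principal for some $N\geq 1$, and choose $f\in\C(\Cv)^*$ with $\div(f)=N\bigl((\infty_2)-(\infty_1)\bigr)$. Because $D$ is squarefree, the affine curve is smooth and its coordinate ring $\C[t][\eta]=\C[t]\oplus\C[t]\,\eta$ is integrally closed in $\C(\Cv)$. Since $f$ has no finite poles it lies in this ring, so $f=x(t)-y(t)\eta$ for some $x,y\in\C[t]$. The divisor of $\sigma^*f=x(t)+y(t)\eta$ equals $-\div(f)$, hence $f\cdot\sigma^*f=x(t)^2-D(t)y(t)^2$ is a regular function with trivial divisor and therefore a non-zero constant $c\in\C^*$; rescaling by $1/\sqrt c$ produces the required solution of \eqref{PolyPell}.

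The main obstacle is verifying the non-degeneracy conditions hidden in each direction: that $a\neq 0$ in the forward direction (so one really exhibits a torsion element and not the trivial class), and that $y(t)\not\equiv 0$ and $c\neq 0$ in the reverse direction (so that the pair produced is a genuine Pell solution rather than a constant identity). Both facts rest on the hypotheses that $D$ has positive even degree and is not a square, which guarantee that $\infty_1$ and $\infty_2$ are well-defined, distinct points and that the hyperelliptic construction $f\mapsto f\cdot\sigma^*f$ does not collapse.
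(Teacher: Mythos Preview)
The paper does not give its own proof of this statement; it is quoted as a classical result attributed to Chebyshev (and later invoked via \cite{zannier}*{Proposition~2.4} in the generalized-Jacobian form). So there is no proof in the paper to compare against.

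Your argument is the standard one and it is correct. The engine is the norm map $f\mapsto f\cdot\sigma^*f$ for the quadratic extension $\C(\Cv)/\C(t)$: a Pell identity $x^2-Dy^2=1$ says exactly that $f=x-y\eta$ is a unit on the affine curve, hence $\div(f)$ is supported on $\{\infty_1,\infty_2\}$; conversely a function whose divisor is $N\bigl((\infty_2)-(\infty_1)\bigr)$ has no affine poles, so lies in $\C[t]\oplus\C[t]\eta$ by integral closure (this is precisely where squarefreeness of $D$ is used), and its norm is a nonzero constant.

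Two small simplifications you might incorporate. In the forward direction the Laurent computation is unnecessary: if $a=0$ then $f$ is a nonzero constant, so $x-y\eta\in\C$; since $1,\eta$ are $\C(t)$-linearly independent (because $D$ is not a square) this forces $y=0$, contradicting non-triviality. In the reverse direction, $y\not\equiv 0$ follows at once from the shape of $\div(f)$: a nonzero polynomial in $t$ alone has poles of the \emph{same} order at $\infty_1$ and $\infty_2$, which is incompatible with $\div(f)=N\bigl((\infty_2)-(\infty_1)\bigr)$ for $N\geq 1$. With these remarks your final paragraph about ``hidden non-degeneracy conditions'' becomes a one-line check rather than an obstacle.
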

Similarily, the degrees of the partial quotients in the continued fraction expansion of $\sqrt{D(t)}$ can be studied by looking at the multiples of $(\infty_1)-(\infty_2)$ in $J$.

When $D(t)$ is not squarefree there are technical complications, but it is possible to recover most of the arguments and conclusions seen above, by using the notion of \emph{generalized Jacobians}, which we will introduce in the following section.

\bigskip

In this paper we will study the continued fraction expansion of $\sqrt{D(t)}$ in the case of genus 0, that is to say, when the curve defined by $y^2=D(t)$ is a singular curve of arithmetic genus 0.
Up to linear transformations, this happens if and only if $D(t)=(t^2-1)\cdot D_1(t)^2$.
This case was sketched roughly in \cite{zannier}*{ Example 4.2}; we will work out the computations sketched therein and relate in Theorem~\ref{thm:rootsofunity} the roots of the polynomial $D$ with the degrees of the partial quotients.

We consider first the case in which $D_1$ is a squarefree polynomial, and we consider later in Sections \ref{sec:nonsquarefreeGenJac}--\ref{sec:nonsquarefreeGeo} the case of a $D_1$ with multiple roots.

Our main theorem, proved in Section~\ref{sec:GeoParQuot} is the following
\begin{teo}[Theorem~\ref{thm:rootsofunity}]
Let $\rho_1,\dotsc,\rho_g$ be the roots of $D_1(t)$, and $b_i=\rho_i+\sqrt{\rho_i^2-1}$ for some fixed choices of the square roots. We assume $b_1^{\pm 1}, \dotsc , b_g^{\pm 1}$ to be distinct. Let $r$ be the number of roots of unity among the $b_i$. Then:

\begin{enumerate}
\item Only finitely many partial quotients of $\sqrt{D(t)}$ have degree $\ge r+2$;
\item Let the roots of unity be precisely $b_1, \dotsc , b_r$ and let $D_p(t)=(t^2-1)(t-\rho_1)^2 \dotsc (t-\rho_r)^2$. Then $\sqrt{D(t)}$ has only finitely many partial quotients of degree $\ge d-r$, with the exceptions of the partial quotients $a_n$ corresponding to those indices $n$ such that the convergents $(p_n(t), q_n(t))$ are of the form $(p(t)(t-\rho_{r+1}) \dotsm (t-\rho_g),q(t))$, where $(p(t),q(t))$ are convergents of $\sqrt{D_p(t)}$. In other words, up to finitely many exceptions, every partial quotient with degree $\ge d-r$ is obtained from $\sqrt{D_p(t)}$.
\end{enumerate}
\end{teo}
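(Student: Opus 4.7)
I would proceed geometrically, along the lines sketched by Zannier. The first step is to identify the generalized Jacobian $J$ of the singular curve $\Cv\colon y^2=D(t)$. Writing $y=D_1(t)z$ reduces $\Cv$ birationally to the rational normalization $\bar\Cv\colon z^2=t^2-1$, which I parametrize by $\P^1$ via $t=\tfrac{1}{2}(b+b^{-1})$, $z=\tfrac{1}{2}(b-b^{-1})$. The two preimages of each node $\rho_i$ are then $b=b_i$ and $b=b_i^{-1}$. Since $\bar\Cv$ has genus zero, $J\cong\Gm^g$ with one $\Gm$-factor per node, and evaluating the rational function $b$ (whose divisor on $\P^1$ is $(\infty_1)-(\infty_2)$) at each pair of glued points shows that the class of $(\infty_1)-(\infty_2)$ corresponds to $(b_1^2,\dotsc,b_g^2)\in\Gm^g$. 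In particular, the $n$-th multiple corresponds to $(b_1^{2n},\dotsc,b_g^{2n})$.

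The next step is the dictionary between the $n$-th convergent $(p_n,q_n)$ and the divisor class $n\cdot[(\infty_1)-(\infty_2)]$. Via Proposition~\ref{Pro:ConvergentCondition} the function $p_n-q_n\sqrt{D}$ on $\Cv$ gives an effective representative $\Delta_n$ of $n\cdot[(\infty_1)-(\infty_2)]+g\cdot(\infty_2)$. The degree of the partial quotient $a_{n+1}$ equals $\deg q_{n+1}-\deg q_n$, and a Riemann--Roch-type count shows that $\deg a_{n+1}\ge k+1$ precisely when $n\cdot[(\infty_1)-(\infty_2)]$ lies in a certain subvariety $W_k\subseteq J=\Gm^g$ of codimension $k$, where $W_k$ is a finite union of translates of subtori of $\Gm^g$ indexed by which nodes are ``absorbed'' by $\Delta_n$.

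With this dictionary in place, both parts of the theorem reduce to Mordell--Lang-type statements in $\Gm^g$. Under the distinctness hypothesis, the cyclic subgroup generated by $(b_1^2,\dotsc,b_g^2)$ has Zariski closure $H$ equal to a finite union of cosets of a subtorus of dimension $g-r$: the first $r$ coordinates land in a finite cyclic subgroup, while the remaining $g-r$ coordinates span a full-dimensional subtorus. For part~(1), the subvariety $W_{r+1}$ has codimension $r+1$, hence meets $H$ in a proper subvariety, and the finiteness of $\{n\in\Z\colon(b_1^{2n},\dotsc,b_g^{2n})\in W_{r+1}\}$ follows from the discreteness of the multiplicative orbit. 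For part~(2), the subvariety $W_{d-r-1}$ has codimension $d-r-1=g-r$ equal to the dimension of the identity component of $H$; its intersection with $H$ consists of the points where the last $g-r$ coordinates take a prescribed value, and up to finitely many exceptions these are exactly the points of the ``torsion sublocus'' of $H$. This torsion sublocus corresponds, under the dictionary, to divisor classes $\Delta_n$ that absorb the nodes $\rho_{r+1},\dotsc,\rho_g$, and the identity $\sqrt{D}=(t-\rho_{r+1})\dotsm(t-\rho_g)\sqrt{D_p}$ then forces the corresponding convergent to have the form $(p\cdot(t-\rho_{r+1})\dotsm(t-\rho_g),q)$ with $(p,q)$ a convergent of $\sqrt{D_p(t)}$, as claimed.

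The main technical obstacle is the dictionary set up in the second step: pinning down the geometric meaning of the subvarieties $W_k$ with the correct codimensions, and verifying that $W_{d-r-1}\cap H$ has exactly the structure described above rather than being larger. This requires a careful analysis of which effective divisors on the singular curve $\Cv$ have large order of contact with $\infty_1$ or $\infty_2$, carried out in the framework of the generalized Jacobian. Once this is in hand, the torus-geometric input is elementary and the factorization identity for $\sqrt{D}$ supplies the explicit description of the convergents in part~(2).
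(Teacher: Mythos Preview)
Your overall architecture---realize $J_\emme$ as $\Gm^g$, send $\delta$ to $(b_1^2,\dotsc,b_g^2)$, and translate ``$\deg a_n\ge d-l$'' into ``$\delta^n$ lies in a subvariety $W_l\subseteq\Gm^g$ of dimension $l$''---matches the paper. But two of your key steps are not correct as stated, and the actual work of the theorem lies precisely there.

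First, your description of $W_k$ as ``a finite union of translates of subtori'' is false. In the paper $W_l$ is the image of the sum map $(\Cv\setminus S)^l\to\Gm^g$, $(P_1,\dotsc,P_l)\mapsto[P_1]+\dotsb+[P_l]$; for instance $\overline{W_{g-1}}$ is the zero locus of an irreducible polynomial that is linear in each coordinate, not a coset of a subtorus. The recursive structure of $\overline{W_l}$ (it is $W_l$ together with certain $\delta$-translates of preimages of $\overline{W_{l-2}^{(j)}}$ coming from dropping a root $\rho_j$) is what drives an induction on $g$, and this structure is invisible if one pretends the $W_l$ are torus cosets.

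Second, and more seriously, your dimension count does not yield finiteness. Saying ``$W_{r+1}$ has codimension $r+1$, so it meets the $(g-r)$-dimensional closure $H$ of the orbit properly, hence the orbit hits it finitely often'' is a non sequitur: a proper intersection can still be positive-dimensional, and a Zariski-dense orbit can meet any positive-dimensional subvariety infinitely often. What one actually needs is Skolem--Mahler--Lech (the Zariski closure of $\{\delta^n:n\in I\}$ is a finite union of torus cosets) \emph{together with} a statement ruling out positive-dimensional torus cosets inside $\overline{W_l}$ for the relevant $l$. The paper isolates this as a separate lemma: if $X$ is a complete curve carrying functions $z_1,\dotsc,z_l$ such that the $g$ functions $f_i=\prod_j\frac{z_j-b_i}{z_j-b_i^{-1}}$ generate a group of rank $1$ modulo constants, then at most $l$ of the $f_i$ are nonconstant and at most $l-1$ are constant. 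The second bound is proved by a non-obvious double-counting of $\sum_{j<k}\deg(z_jz_k-1)$, and it is exactly what converts ``coset inside $\overline{W_l}$'' into the inequality $r\ge g-l$ (for part~(1)) and $r\le l-1$ (for part~(2)). Without this lemma the torus-geometric input is \emph{not} elementary, and your sketch does not supply a substitute.
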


In order to establish this theorem we determine some special subvarieties in a quotient of the generalized Jacobian of the curve $\Cv$ and compute explicitly their equations; then we study when the powers of a special point belong to these subvarieties.

In Section~\ref{sec:examples} we construct  polynomials $D(t)$ with a non-periodic expansion such that infinitely many partial quotients have degree greater than one, thus correcting an imprecision in \cite{zannier}*{Example 4.1}. Families of examples arise by considering, for instance, a  polynomial $D(t)=H(t^2)$ in which only even exponents of the unknown appear, but there are also many examples not of these special forms. Using the construction seen in Section~\ref{sec:examples} we show that the polynomial
\begin{align*}
 D(t)&=\frac{1}{4096}(t^2-1)(4t^2-1)^2(16t^2+20t+13)^2=\\
&=t^{10}+\frac{5 t^9}{2}+\frac{27 t^8}{16}-\frac{55 t^7}{32}-\frac{911 t^6}{256}-\frac{105 t^5}{64}+\frac{379 t^4}{512}+\frac{505 t^3}{512}+\frac{705 t^2}{4096}-\frac{65 t}{512}-\frac{169}{4096}
\end{align*}
is not Pellian and the sequence of the partial quotients is given by $5,\overline{1,2}$.

\section{Generalized Jacobians}
Generalized Jacobians extend the notion of the Jacobian of a curve and are defined as a quotient of some set of divisors of degree 0 with respect to an equivalence relation finer than linear equivalence. We will need generalized Jacobians in place of ordinary Jacobians because we will study the expansion of non-squarefree polynomials.

The theory of generalized Jacobians had been developed by Rosenlicht and Serre; we refer the reader to Serre's book \cite{serre} for details.

Let us consider a complete smooth algebraic curve $\mathcal{C}$ and an effective divisor $\textfrak{m}$, which we call the \textit{modulus}
$$\textfrak{m} = \sum_{i=1}^n e_i P_i \text{ with } P_i \in \mathcal{C}, \quad e_i \ge 1.$$
We say that two divisors $E,E'$ are $\emme$-\emph{equivalent}, and we write $E\msim E'$, if there exists $f \in \mathbb{C}(\mathcal{C})^*$ such that $$E = \text{div}(f) + E' \text{ and ord}_{P_i}(1-f) \ge e_i.$$
We denote by Div$_\mathfrak{m}^0(\mathcal{C})$ the group of divisors of degree zero whose support is disjoint from $\mathfrak{m}$. The \textit{generalized Jacobian} of the pair $(\mathcal{C},\mathfrak{m})$ is the quotient
$$J_{\mathfrak{m}} \cong \Div_\emme^0(\mathcal{C})/\msim.$$

As shown in \cite{serre}, this is an algebraic group and we have the following exact sequence
$$0 \rightarrow L_{\mathfrak{m}} \rightarrow J_{\mathfrak{m}} \rightarrow J \rightarrow 0$$
where $L_{\mathfrak{m}}$ is an affine algebraic group and $J$ is the usual Jacobian. If $\mathfrak{m}=0$ we have $J_{\mathfrak{m}}=J$; otherwise, $L_{\mathfrak{m}}$ has dimension deg$(\mathfrak{m})-1$.

The algebraic structure of $L_{\mathfrak{m}}$ can be described explicitely (see \cite{serre}, IV § 3) and in our setting $L_{\mathfrak{m}}$ is isomorphic to $\Gm^{n-1} \times \prod_{i=1}^{n} \Ga^{e_i -1}.$

\section{Notation and setting}
Let  $\Cv:\{U^2=T^2-V^2\} \subseteq \P_2$; this is a smooth curve of genus 0. We fix the affine chart given by $V \neq 0$ and use affine coordinates $t=T/V$ and $u=U/V$. With respect to this chart $\Cv$ has two points at infinity, which we will denote by  $(\infty_+)=(1:1:0)$ and $(\infty_-)=(-1:1:0)$.\\

Let $D_1(t)=(t-\rho_1) \dotsm (t-\rho_g) \in  \C[t]$ be a monic polynomial of degree $g$ with all the $\rho_i$ distinct and such that $\rho_i \neq \pm 1$. Let $D(t)=(t^2-1)D_1(t)^2$.

For each $\rho_i$ we define $\xi_i^\pm=(\rho_i,\pm\sqrt{\rho_i^2-1}) \in \Cv$ for some choice of the square roots.\\

We define $\emme=\sum_{i=1}^g \left((\xi_i^+) + (\xi_i^-) \right)$, which is a divisor on $\Cv$, and we denote by $S$ its support. We take $\emme$ as the modulus and write $J_\emme$ for the generalized Jacobian of $\left(\mathcal{C},\emme\right)$.
Notice that the divisor $\emme$ does not depend on the determinations of the square roots $\sqrt{\rho_i^2-1}$, and is invariant upon exchanging $\xi_i^+$ and $\xi_i^-$.\\

Let $\delta$ be the class of the divisor $(\infty_-)-(\infty_+)$ in $J_\emme$. We have

\begin{prop}[\cite{zannier}*{Proposition 2.4}]\label{prop:pellian-gen}
$D(t)$ is pellian if and only if $\delta$ is a torsion point of $J_\emme$.
\end{prop}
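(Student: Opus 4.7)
The idea is to work on the smooth genus-zero model $\Cv$, where the identity $\sqrt{D(t)} = u\,D_1(t)$ holds in $\C(\Cv)$, and factor the Pell relation accordingly. For a candidate pair $(x,y)$ set
\[
\phi = x + u\,D_1\,y, \qquad \bar\phi = \iota^*\phi = x - u\,D_1\,y,
\]
where $\iota$ is the hyperelliptic involution $u\mapsto -u$; then $x^2 - D y^2 = 1$ is equivalent to $\phi\bar\phi = 1$.

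The first step is to compute $\div(\phi)$ on $\Cv$. At the branch points $(\pm 1,0)$ we have $u=0$, so $\phi$ and $\bar\phi$ coincide with the value $x(\pm 1)$, and $\phi\bar\phi = 1$ forces this value to be $\pm 1$, a nonzero unit. At each $\xi_i^\pm$ the factor $D_1$ vanishes, so again $\phi(\xi_i^\pm) = \bar\phi(\xi_i^\pm) = x(\rho_i) \in \{\pm 1\}$. Hence $\phi$ has no zero or pole at any finite point of $\Cv$, and using $\phi\bar\phi = 1$ together with $\iota$-conjugation one concludes
\[
\div(\phi) = N\bigl((\infty_-)-(\infty_+)\bigr) \qquad \text{for some } N\in\Z.
\]

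For the forward direction, given a nontrivial Pell solution the function $\phi^2$ satisfies $\phi^2(\xi_i^\pm) = x(\rho_i)^2 = 1$ and $\div(\phi^2) = 2N\bigl((\infty_-)-(\infty_+)\bigr)$. Unpacking the definition of $\msim$ (with all $e_i=1$) this is exactly the statement $2N\delta = 0$ in $J_\emme$, and $N\neq 0$ follows because $y\neq 0$ forces $\phi\notin\C(t)$. Conversely, if $\delta$ is torsion of order $N$, the definition of $\msim$ furnishes $\phi\in\C(\Cv)^*$ with divisor $N((\infty_-)-(\infty_+))$ and with $\phi(\xi_i^\pm)=1$ for every $i$. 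The product $\phi\bar\phi$ is a nonzero constant (trivial divisor) and equals $1$ on $S$, hence $\phi\bar\phi = 1$ identically. Set
\[
x = \frac{\phi+\bar\phi}{2}, \qquad y = \frac{\phi-\bar\phi}{2\,u\,D_1}.
\]
Both are $\iota$-invariant, so lie in $\C(t)$; $x$ has poles only above $\infty$ and is therefore a polynomial; for $y$ one must verify regularity at the finite points where $uD_1$ vanishes. At each branch point $(\pm 1,0)$, $\iota$ fixes the point, so the $\iota$-antisymmetric function $\phi-\bar\phi$ vanishes to odd order in the local parameter $u$, cancelling the simple zero of $u$. At each $\xi_i^\pm$, the prescribed value $\phi(\xi_i^\pm)=\bar\phi(\xi_i^\pm)=1$ makes $\phi-\bar\phi$ vanish to order $\geq 1$, cancelling the simple zero of $D_1$. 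Thus $x,y\in\C[t]$ and $x^2 - D y^2 = \phi\bar\phi = 1$, so $D$ is Pellian.

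The main obstacle is the bookkeeping at the points of the modulus $S$: the precise reason the generalized Jacobian (rather than the ordinary one) is the correct object is that the $\msim$-equivalence condition at $\xi_i^\pm$ is exactly what is needed to guarantee that $(\phi-\bar\phi)/(uD_1)$ is regular (and hence a polynomial) at the points where $D_1$ vanishes. Checking these local orders carefully, and matching them to the $e_i=1$ data of $\emme$, is the only non-formal input.
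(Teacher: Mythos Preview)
The paper does not give its own proof of this proposition: it is stated with a citation to \cite{zannier}*{Proposition 2.4} and used as a black box. So there is no ``paper's proof'' to compare against, and your proposal must be judged on its own merits.

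Your argument is correct and is the standard one. The only point you leave implicit is that $y\neq 0$ in the backward direction: this follows because $\div(\phi)=N\bigl((\infty_-)-(\infty_+)\bigr)$ with $N\ge 1$ is not $\iota$-invariant, so $\phi\neq\bar\phi$. Otherwise the bookkeeping at the branch points and at $S$ is exactly right, and your remark that the $\msim$-condition (with $e_i=1$) is precisely what is needed to make $(\phi-\bar\phi)/(uD_1)$ regular at $S$ is the heart of the matter.
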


In the genus zero case we have an easy description of $J_{\emme}$:

\begin{prop}\label{prop:generalizedjacobian}
$J_{\emme}$ is isomorphic to $\Gm^{2g-1}$
\end{prop}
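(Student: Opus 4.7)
The plan is to apply the structure theorem for generalized Jacobians recalled in the previous section. Since $\Cv$ is a smooth conic in $\P_2$, it is isomorphic to $\P^1$ and in particular has genus zero, so its ordinary Jacobian $J$ is trivial. The fundamental exact sequence
\[
0 \to L_\emme \to J_\emme \to J \to 0
\]
therefore collapses to an isomorphism $J_\emme \cong L_\emme$, reducing the problem to identifying $L_\emme$.

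Next I would unpack the combinatorial structure of the modulus. The assumption that $b_1^{\pm 1}, \dotsc, b_g^{\pm 1}$ are pairwise distinct translates directly into the statement that the $2g$ points $\xi_i^+$ and $\xi_i^-$ are all distinct on $\Cv$: indeed, two such points coincide only if the corresponding $b$'s agree, since the map $\rho\mapsto b=\rho+\sqrt{\rho^2-1}$ and its conjugate parametrise $\Cv$. Therefore, in the general description
\[
L_\emme \cong \Gm^{n-1}\times\prod_{i=1}^{n}\Ga^{e_i-1}
\]
recalled in Section~2, we have $n=2g$ and every $e_i=1$; all the additive factors vanish and we conclude $L_\emme\cong\Gm^{2g-1}$.

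If I wanted to make the isomorphism explicit, I would fix a base divisor of degree zero and send the class of $E$ to the tuple of values $\bigl(f(\xi_1^+),f(\xi_1^-),\dotsc,f(\xi_g^+),f(\xi_g^-)\bigr)$ of a rational function $f$ on $\Cv\cong\P^1$ whose divisor realises the linear equivalence, taken modulo the common scaling by $\C^*$ coming from multiplying $f$ by a nonzero constant; this is precisely the source of the $-1$ in the exponent. The $\emme$-equivalence relation is engineered so that two choices of $f$ differ, up to scalars, by a function congruent to $1$ at each point of $S$, and this well-definedness check is the only bit of bookkeeping required.

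There is no genuine obstacle here, as the proposition is essentially a direct application of Serre's structure theorem combined with the vanishing of $J$ in genus zero; the only hypothesis one must be careful to invoke is the distinctness of the $b_i^{\pm 1}$, which ensures that $\emme$ is a reduced divisor of degree $2g$ supported on $2g$ points.
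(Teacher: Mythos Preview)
Your proof is correct but takes a different route from the paper. You invoke the general structure theorem for $L_\emme$ quoted in Section~2 together with the vanishing of the ordinary Jacobian in genus zero, which immediately yields $J_\emme\cong L_\emme\cong\Gm^{2g-1}$. The paper instead constructs the isomorphism by hand: it writes down the explicit homomorphism
\[
\psi(\div f)=\left(\frac{f(\xi_1^+)}{f(\xi_1^-)},\frac{f(\xi_2^-)}{f(\xi_1^-)},\dotsc,\frac{f(\xi_g^+)}{f(\xi_1^-)}\right)
\]
and checks directly that it is well-defined, surjective, and has kernel exactly the $\emme$-equivalence classes. Your third paragraph essentially rediscovers this map. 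Your approach is cleaner for the bare statement, but the paper's explicit construction is not wasted: the concrete coordinates are used immediately afterwards to compute $i(\delta)$ and to set up the maps $\psi_l$ in Section~4, so the hands-on argument pays for itself.

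One small point: at this stage of the paper the $b_i$ have not yet been introduced, and the distinctness of the $2g$ points $\xi_i^{\pm}$ is not an additional assumption but a direct consequence of the standing hypotheses that the $\rho_i$ are distinct and none equals $\pm 1$ (so $\sqrt{\rho_i^2-1}\neq 0$). Your reasoning is fine, but you should phrase the justification in terms of the $\rho_i$ rather than appealing to a hypothesis on the $b_i^{\pm 1}$ that only appears later in Theorem~\ref{thm:rootsofunity}.
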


\begin{proof}
We consider the following map:
\begin{equation*}
\psi: \Div_{\emme}^0(\Cv) \rightarrow \Gm^{2g-1}
\end{equation*}
\begin{equation*}
\psi( \div f)=\left( \frac{f(\xi_1^+)}{f(\xi_1^-)}, \frac{f(\xi_2^-)}{f(\xi_1^-)}, \frac{f(\xi_2^+)}{f(\xi_1^-)}, \dotsc , \frac{f(\xi_g^-)}{f(\xi_1^-)}, \frac{f(\xi_g^+)}{f(\xi_1^-)} \right)
\end{equation*}

This map is well-defined since $\Div_{\emme}^0(\Cv)\subseteq \Div^0(\Cv)$ and $\Cv$ has genus 0, therefore every divisor in $\Div_{\emme}^0(\Cv)$ is of the form $\div f$ for some $f \in \C(\Cv)^*$; this $f$ is unique up to scalar multiplication and the points $\xi_i^{\pm}$ are neither zeros nor poles of $f$ by definition of $\Div_{\emme}^0(\Cv)$. Moreover, $\psi$ is clearly a group homomorphism.

The map $\psi$ is surjective since we can prescribe the values of a function $f$ on each of the $\xi_i^{\pm}$, provided it is not a zero or a pole. 

The kernel of $\psi$ consists in the divisors of the functions $f$ such that $f(\xi_1^+)=f(\xi_1^-) = \dotsb = f(\xi_g^+)=f(\xi_g^-)$. 
As $f$ is defined up to scalar multiplication, we see that the divisors in $\ker\psi$ are precisely those which are equivalent to $0$ under $\sim$, hence $\psi$ factors to an isomorphism $J_\emme \cong \Div_{\emme}^0(\Cv)/{\sim} \cong \Gm^{2g-1}$.
\end{proof}

For our purposes, it will be easier to work with a quotient of $J_\emme$.
It is easy to see that $\Gal(\C(\Cv)/\C(t))$ acts on the points of $\mathcal{C}$ and by linearity on $\Div(\Cv)$ and on $J_\emme$, the nontrivial automorphism $g$ being $(t,u) \mapsto (t,-u)$. Let $\hat{J}_\emme$ be the subgroup of $J_\emme$ invariant for the action above, and define $G=J_\emme/\hat{J}_\emme$. The image of $\delta$ in the quotient $G$ is a torsion point if and only if it is a torsion point in $J_\emme$, since $g(\delta)=-\delta$, so Proposition~\ref{prop:pellian-gen} holds in the same form replacing $J$ by $G$.\\

We will now construct an explicit isomorphism between $G$ and $\Gm^g$.

Let $E\in\Div^0_\emme(\Cv)$ be a divisor of degree zero. Since $\mathcal{C}$ has genus zero, we can write $E=\div(f)$ for some $f\in\C(\Cv)^*$, and we define
\begin{equation*}
i(E)=\left(\frac{f(\xi_{\rho_1}^+)}{f(\xi_{\rho_1}^-)},\dotsc,\frac{f(\xi_{\rho_g}^+)}{f(\xi_{\rho_g}^-)}\right)
\end{equation*}
Arguing as we did for the map $\psi$ above, we see that $i:\Div^0_\emme(\Cv)\to \Gm^g$ is a well-defined group homomorphism.

\begin{prop}\label{prop:smallgeneralizedjacobian}
The map $i:\Div^0_\emme(\Cv)\to \Gm^g$ induces a group isomorphism $\iota:G\to\Gm^g$.
\end{prop}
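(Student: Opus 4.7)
The plan is to follow the template of Proposition~\ref{prop:generalizedjacobian}: verify that $i$ is a well-defined group homomorphism on $\Div^0_\emme(\Cv)$, that it descends through $\sim_\emme$ and through $\hat{J}_\emme$, and that the resulting arrow $\iota\colon G\to\Gm^g$ is both surjective and injective.

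First I would check well-definedness and multiplicativity. Since $\Cv$ has genus zero, any $E\in\Div^0_\emme(\Cv)$ equals $\div f$ for some $f\in\C(\Cv)^*$ unique up to a nonzero scalar; the hypothesis $\operatorname{supp}(E)\cap S=\emptyset$ ensures every $f(\xi_i^{\pm})$ lies in $\Gm$, and the scalar cancels inside each ratio $f(\xi_i^+)/f(\xi_i^-)$, so $i$ is well-defined; multiplicativity is immediate from $\div(f_1 f_2)=\div f_1+\div f_2$. Descent through $\sim_\emme$ is equally quick: if $E\sim_\emme 0$, a defining $f$ satisfies $\operatorname{ord}_{\xi_i^{\pm}}(1-f)\ge 1$, hence $f(\xi_i^{\pm})=1$ and $i([E])=(1,\dotsc,1)$.

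For surjectivity I would exploit the fact that $\Cv\cong\P^1$: given $(a_1,\dotsc,a_g)\in\Gm^g$, I can exhibit a rational function $f\in\C(\Cv)^*$ with $f(\xi_i^+)=a_i$ and $f(\xi_i^-)=1$ for every $i$, and then $\div f\in\Div^0_\emme(\Cv)$ satisfies $i(\div f)=(a_1,\dotsc,a_g)$.

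The heart of the proof, and the step that I expect to require most care, is the identification of the kernel of the induced map $J_\emme\to\Gm^g$ with $\hat{J}_\emme$. In one direction, if $i([E])=(1,\dotsc,1)$ we can pick $f$ with $f(\xi_i^+)=f(\xi_i^-)$ for all $i$; then $(f\circ g)(\xi_i^{\pm})=f(\xi_i^{\mp})=f(\xi_i^{\pm})$, so the function $h=f/(f\circ g)$ satisfies $h(\xi_i^{\pm})=1$ and $\div h=E-g\cdot E$, giving $g\cdot E\sim_\emme E$ and hence $[E]\in\hat{J}_\emme$. The reverse inclusion runs the same argument backward: from $g\cdot E\sim_\emme E$ one obtains an $h=c\cdot f/(f\circ g)$ with $h(\xi_i^{\pm})=1$ for some scalar $c$, and matching the two evaluations at $\xi_i^+$ and $\xi_i^-$ pins down $c$ and forces $f(\xi_i^+)/f(\xi_i^-)=1$ at every $i$. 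Keeping the scalar normalizations of $f$ and $h$ consistent throughout this last step is where the argument is most delicate, as a careless choice would leave an ambiguous global sign rather than the desired identity.
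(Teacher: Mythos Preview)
Your argument is sound up to the last step, and your proof of the inclusion $\ker(J_\emme\to\Gm^g)\subseteq\hat J_\emme$ via $h=f/(f\circ g)$ is correct; it differs from the paper, which for injectivity instead interpolates the common values $f(\xi_i^+)=f(\xi_i^-)$ by a rational function $r(t)\in\C(t)^*$ and then writes $\div f=\div r+\div(f/r)$, the first summand being the divisor of a Galois-invariant function and the second $\sim_\emme 0$, hence both trivial in $G$. That factorization is slightly more constructive, while your route stays closer to the group-theoretic definition of $G$.

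Your worry about the reverse inclusion is justified, and the ambiguity is not merely a matter of bookkeeping. Take $E=\div(u)$: since $g^*u=-u$ the divisor itself is Galois-stable, so $[E]\in\hat J_\emme$, yet $i(E)=\bigl(u(\xi_i^+)/u(\xi_i^-)\bigr)_i=(-1,\dotsc,-1)\neq(1,\dotsc,1)$. Thus with $\hat J_\emme$ read literally as the full fixed-point subgroup, $\iota$ is only well defined up to this $2$-torsion element, and your argument cannot be completed as written. The paper's well-definedness step asserts that the kernel of $\Div^0_\emme(\Cv)\to G$ is generated by divisors of Galois-invariant \emph{functions} together with divisors $\sim_\emme 0$, both of which clearly lie in $\ker i$; what this description really captures is the identity component of $\hat J_\emme$ (equivalently, the image in $J_\emme$ of the Galois-invariant divisors), and for that subgroup the argument goes through. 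Nothing later in the paper is sensitive to the discrepancy, since only the orbit of $i(\delta)=(b_1^2,\dotsc,b_g^2)$ in $\Gm^g$ is ever used.
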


\begin{proof}
The kernel of the quotient map from $\Div^0_\emme(\Cv)$ to $G$ is generated by the divisors of the functions which are invariant by the action of the Galois group and the functions with value equal to 1 on all the $\xi_i^\pm$. It is clear that these divisors all lie in the kernel of $i$, and therefore the map $\iota$ is well-defined.

To check the injectivity, let $f$ be a rational function such that $\div(f)\in\ker \iota$, so that
$$f(\xi_{\rho_i}^+)=f(\xi_{\rho_i}^-) \quad \forall i=1,\dotsc,g.$$
Therefore we can choose a Galois-invariant function $r(t)\in\C(t)^* \sub \C(\Cv)^*$ that coincides with $f$ on all the $\xi_{\rho_i}^{\pm}$, so that
$$\frac{f}{r}(\xi_{\rho_i}^+)=\frac{f}{r}(\xi_{\rho_i}^-)=1,$$
hence $\div(\frac{f}{r})$ is zero in $J_\emme$. Since $\div(r)$ is zero in $G$ we have that also $\div(f)$ is zero in $G$, which shows the injectivity.

The map $i$ is easily seen to be surjective (e.g. by interpolation as before), and this immediately implies the surjectivity of $\iota$.
\end{proof}

Now we seek the image of $\delta$ in $G$, which is the image of $(\infty_-)-(\infty_+)$. We see that:

\begin{equation*}
\text{div } (t+u) = (\infty_-)-(\infty_+)
\end{equation*}

Hence:

\begin{equation*}
i(\delta)=\left(\dotsc,\frac{(t+u)(\xi_{\rho_i}^+)}{(t+u)(\xi_{\rho_i}^-)},\dotsc\right)=\left(\dotsc,\frac{\rho_i+\sqrt{\rho_i^2-1}}{\rho_i-\sqrt{\rho_i^2-1}},\dotsc\right)=\left(\dotsc,\left(\rho_i+\sqrt{\rho_i^2-1}\right)^2,\dotsc\right)
\end{equation*}

By Chebyshev's theorem (Proposition \ref{prop:pellian-gen}), $D(t)$ is pellian if and only if all the $\rho_i+\sqrt{\rho_i^2-1}$ are roots of unity. These values will be extremely relevant to us, so we define $b_i=\rho_i + \sqrt{\rho_i^2-1}$ and hence $b_i^{-1}=\rho_i - \sqrt{\rho_i^2-1}$ (of course these values depend on the determination of the square roots).

\begin{remark}
$b_i$ is a root of unity if and only if $\rho_i$ is of the form $\cos{\pi r}$ for some $r \in \mathbb{Q}$, so that $D(t)$ is pellian if and only if $D_1(t)$ divides some Chebyshev's polynomial. This can be checked directly by using that the solutions of the Pell equation in this case come from the powers of $t+\sqrt{t^2-1}$.
\end{remark}

\section{On the $W_l$}

\subsection{First maps}

For a point $P \in \Cv \dif S$, we denote with $[P]$ the image of the divisor $(P)-(\infty_+)$ in $G \cong \Gm^g$. We have that:

\begin{equation*}
[P]=(P)-(\infty_+)= \div \left( t+u-(t+u)(P) \right) \text{ as long as } P \neq \infty_+
\end{equation*}
\begin{equation*}
[\infty^+] \text{ is the trivial divisor}
\end{equation*}

Hence the image of $[P]$ via $i$ is:

\begin{equation*}
i([P])=\left(\frac{(t+u)(\xi_1^+)-(t+u)(P)}{(t+u)(\xi_1^-)-(t+u)(P)}, \dotsc , \frac{(t+u)(\xi_g^+)-(t+u)(P)}{(t+u)(\xi_g^-)-(t+u)(P)} \right)
=\left(\frac{(t+u)(P)-b_1}{(t+u)(P)-b_1^{-1}}, \dotsc , \frac{(t+u)(P)-b_g}{(t+u)(P)-b_g^{-1}} \right)
\end{equation*}

The image of $[\infty^+]$ is $(1,\dotsc,1)$. We had already observed that the image of $\delta$ is:

\begin{equation*}
i(\delta)=(b_1^2, \dotsc , b_g^2)
\end{equation*}

\subsection{The $W_l$}

We now define a chain of subvarieties of $G$, namely $0=W_0 \sub \dotsb W_g \sub G$ that will encode the behaviour of the partial quotients of $\sqrt{D(t)}$. We fix a nonnegative integer $l \le g$. We have maps

\begin{equation*}
\begin{tikzcd}[column sep=3.2em]
(\Cv \dif S)^l \arrow[r, "\phi_l"] & G  \arrow[r, "i"] & \mathbb{G}_m^g
\end{tikzcd}
\end{equation*}

so that $\phi_l(P_1,\dotsc,P_l)=[P_1]+\dotsb+[P_l]$ and therefore
$$(i \circ \phi_l) (P_1,\dotsc,P_l)=\left(\dotsc,\prod_{j=1}^l \frac{z_j-b_i}{z_j-b_i^{-1}},\dotsc\right)$$
where $z_j=(t+u)(P_j)$. The map $t+u$ gives an isomorphism between $\Cv$ and $\P_1$: setting $S^*=(t+u)(S)=\{b_1,b_1^{-1}, \dotsc , b_g,b_g^{-1} \}$, we notice that the map

\begin{equation*}
\begin{tikzcd}[column sep=3.2em]
\Cv \setminus S \arrow[r, "t+u"] & \mathbb{P}_1(\mathbb{C}) \setminus S^*
\end{tikzcd}
\end{equation*}

is invertible with inverse function given by

\begin{equation*}
z \rightarrow \left(\frac{1}{2}(z+z^{-1}),\frac{1}{2}(z-z^{-1})\right)
\end{equation*}

Since both $i$ and $t+u$ are isomorphisms, the map $i \circ \phi_l$ is conjugated to the map:

\begin{equation*}
\begin{tikzcd}[column sep=3.2em]
\psi_l: (\mathbb{P}_1(\mathbb{C}) \setminus S^*)^l \arrow[r] & \mathbb{G}_m^g
\end{tikzcd}
\end{equation*}

\begin{equation*}
\psi_l(z_1,\dotsc,z_l)=\left( \dotsc ,\prod_{j=1}^l \frac{z_j-b_i}{z_j-b_i^{-1}}, \dotsc \right)
\end{equation*}

We define $W_l$ to be the image of $\phi_l$. $W_l$ is therefore a constructible set (since the ground field is $\C$).

\subsection{Some properties of the $W_l$}

We notice that $\psi_l$ (respectively $\phi_l$) is invariant by permutation of the $z_j$ (respectively of the $P_j$). We also notice that if, say, $z_2=z_1^{-1}$:
$$\frac{z_1-b_i}{z_1-b_i^{-1}} \frac{z_2-b_i}{z_2-b_i^{-1}}=\frac{b_i^2z_1(z_1-b_i)(b_iz_1-1)}{z_1(1-z_1b_i)(b_i-z_1)}=b_i^2$$
(the equality occuring also if $z_1=0, z_2=\infty$), so that $\psi_l(z,z^{-1},z_3,\dotsc,z_l)$ does not depend on $z$ (respectively, $\phi_l(P,\sigma(P),P_3,\dotsc,P_l)$ does not depend on $P$, $\sigma$ being the nontrivial element of $\Gal \mathbb{C}(\Cv)/\mathbb{C}(t)$: we have that $(t+u)(P) (t+u)(\sigma(P))=(t^2-u^2)(P)=1)$. The next proposition states that these are all the possible symmetries of $\psi_l$.

\begin{prop}
$\psi_l(z_1,\dotsc,z_l)=\psi_l(z_1',\dotsc,z_l')$ if and only if there exists $k \le \frac{l}{2}$ such that, up to a permutation of the $z_j$ and $z_j'$, the following equalities hold:
\begin{align*}
   z_1 z_2 &=z_3z_4=\dotsb=z_{2k-1}z_{2k}=z_1'z_2'=z_3'z_4'=\dotsb=z_{2k-1}'z_{2k}'=1,\\
z_j&=z_j'\text{ for }j > 2k. 
\end{align*}

\end{prop}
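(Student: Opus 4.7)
For the ``if'' direction, the calculation just preceding the proposition shows that every pair with $z_{2j-1}z_{2j}=1$ contributes a constant factor $b_i^2$ to the $i$-th coordinate of $\psi_l$, and the remaining coinciding entries $z_j=z_j'$ contribute identically on both sides.

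For the converse, the plan is to translate the equation into a polynomial identity. Set $P(z)=\prod_{j=1}^l(z-z_j)$ and $P'(z)=\prod_{j=1}^l(z-z_j')$, both monic of degree $l$; a direct computation gives $\psi_l(z_1,\dotsc,z_l)_i=P(b_i)/P(b_i^{-1})$, so the hypothesis becomes $P(b_i)P'(b_i^{-1})=P(b_i^{-1})P'(b_i)$ for every $i=1,\dotsc,g$. Introducing the reciprocal polynomial $\tilde P(z):=z^lP(z^{-1})=\prod_j(1-zz_j)$ and the auxiliary
\[
H(z):=P(z)\tilde P'(z)-P'(z)\tilde P(z),
\]
one has $\deg H\le 2l$, and from $\tilde P(b_i)=b_i^lP(b_i^{-1})$ one gets $H(b_i)=b_i^l\bigl[P(b_i)P'(b_i^{-1})-P'(b_i)P(b_i^{-1})\bigr]=0$. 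The anti-reciprocal identity $z^{2l}H(z^{-1})=-H(z)$, a short calculation from $z^l\tilde P(z^{-1})=P(z)$, gives $H(b_i^{-1})=0$, while $\tilde P(\pm 1)=(\pm 1)^lP(\pm 1)$ yields $H(\pm 1)=0$. Under the standing assumption that $b_1^{\pm 1},\dotsc,b_g^{\pm 1}$ are pairwise distinct, and since $\rho_i\neq\pm 1$ implies $b_i\neq\pm 1$, the $2g+2$ numbers $b_1^{\pm 1},\dotsc,b_g^{\pm 1},1,-1$ are pairwise distinct roots of $H$. Since $l\le g$, we have $\deg H\le 2l<2g+2$, forcing $H\equiv 0$, i.e., the polynomial identity $P\tilde P'=P'\tilde P$.

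Writing $Z$ and $Z'$ for the multisets of roots of $P$ and $P'$ and $\sigma\colon z\mapsto z^{-1}$, the identity $P\tilde P'=P'\tilde P$ gives two constraints: comparing the multisets of zeros yields $Z\cup\sigma Z'=Z'\cup\sigma Z$, equivalently $Z-Z'$ is $\sigma$-invariant as a signed multiset; comparing leading coefficients yields $\prod z_j=\prod z_j'$. Setting $N_+=Z\setminus Z'$ and $N_-=Z'\setminus Z$ (disjoint supports, same cardinality), the $\sigma$-invariance of $N_+-N_-$ combined with disjointness of supports forces each of $N_+,N_-$ to be individually $\sigma$-invariant. In the leading-coefficient equation the contributions from $\sigma$-orbits of size $2$ cancel, so it reduces to $(-1)^{N_+(-1)}=(-1)^{N_-(-1)}$; combined with disjointness of supports and $|N_+|=|N_-|$, this in turn forces the multiplicities of both $+1$ and $-1$ in each of $N_+,N_-$ to be even. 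Hence each $N_\pm$ decomposes into $k$ inverse pairs $(a,a^{-1})$---including the degenerate pairs $(1,1)$ and $(-1,-1)$---which produces the required permutation.

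The main obstacle is the final parity step: bare $\sigma$-invariance of $N_\pm$ does not rule out $\pm 1$ appearing with odd multiplicity, and eliminating that possibility genuinely needs the leading-coefficient identity $\prod z_j=\prod z_j'$, information that would be invisible if one only tracked the divisor of $P\tilde P'$.
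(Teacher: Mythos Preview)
Your core strategy coincides with the paper's: form an auxiliary polynomial from $P$ and its reciprocal, show it vanishes at the $2g+2$ distinct points $b_1^{\pm1},\dotsc,b_g^{\pm1},\pm1$, and use the degree bound $2l\le 2g$ to conclude it is identically zero. The paper does exactly this, defining $F(x)=x^{\gamma'}P(x)Q'(x)-x^{\gamma}P'(x)Q(x)$ with $Q$ the reciprocal of $P$.

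There is, however, a genuine gap. The domain of $\psi_l$ is $(\mathbb{P}_1(\C)\setminus S^*)^l$, and since $S^*=\{b_1^{\pm1},\dotsc,b_g^{\pm1}\}$ consists of finite nonzero numbers, both $0$ and $\infty$ are allowed values for the $z_j$. Your definition $P(z)=\prod_{j=1}^l(z-z_j)$ ``monic of degree $l$'' silently assumes every $z_j$ is finite; if some $z_j=\infty$ the object is not a polynomial and the whole chain (the formula $\psi_l(\cdot)_i=P(b_i)/P(b_i^{-1})$, the degree bound on $H$, the anti-reciprocal identity) collapses. The paper addresses this by first stripping inverse pairs and $(0,\infty)$ pairs, then splitting the remaining $z_j$ into $\alpha$ finite nonzero values, $\beta$ zeros, and $\gamma$ infinities, and carrying these counts through the computation; it then shows $\beta=\beta'$ and $\gamma=\gamma'$ from the resulting identity before matching the finite nonzero parts. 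Your argument needs an analogous preliminary step.

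A smaller issue lurks in your leading-coefficient step: to cancel and obtain $\prod_{N_+}z=\prod_{N_-}z$ you implicitly divide by $\prod_{C}z$ where $C=Z\cap Z'$, which fails if $0\in C$. This is easily repaired (the identity $P\tilde P'=P'\tilde P$ first forces the multiplicities of $0$ on both sides to agree, after which one works in $\C^*$), but it should be said.

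Your endgame—deducing that $N_\pm$ are individually $\sigma$-invariant and then using the leading coefficient together with $|N_+|=|N_-|$ to force the multiplicities of $\pm1$ to be even—is a nice alternative to the paper's route. The paper instead reduces at the outset so that the $z_j$ contain no inverse pairs (hence no $\pm1$), which makes the final matching immediate; your version avoids that reduction at the cost of the parity bookkeeping. Both are valid once the $0,\infty$ issues are handled.
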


\begin{proof}
The \textit{if} part follows from the observations above.\\
For the other direction, if, say, $z_1z_2=z_1'z_2'=1$, we have that
$$(b_1^2,\dotsc,b_g^2) \cdot \psi_{l-2}(z_3,\dotsc,z_l)=\psi_l(z_1,\dotsc,z_l)=\psi_l(z_1',\dotsc,z_l')=(b_1^2,\dotsc,b_g^2) \cdot \psi_{l-2}(z_3',\dotsc,z_l')$$
so we can assume $z_j \neq z_k^{-1}$ for any $1 \le j,k \le l$. Moreover, if for some $1 \le j,k \le l$ we have $z_j'=0$ and $z_k'=\infty$, we change them with $z_j'=z_k'=1$.\\
Let $\alpha$ be the number of $z_j$ which are neither $0$ nor $\infty$ (and we assume they are $z_1,\dotsc,z_{\alpha}$), $\beta$ the number of $0$ among the $z_j$ and $\gamma$ the number of $\infty$ among the $z_j$. We do the same with $\alpha',\beta',\gamma'$ for the $z_j'$. Notice that $\alpha+\beta+\gamma=\alpha'+\beta'+\gamma'=l$. Let us call $P(x)=x^{\beta}\prod_{j=1}^{\alpha} (x-z_j)$ and $P'(x)=x^{\beta'}\prod_{j=1}^{\alpha'} (x-z_j')$. The hypothesis is now equivalent to
$$\frac{P(b_i)}{P(b_i^{-1})}=\frac{P'(b_i)}{P'(b_i^{-1})} \text{ for all } 1 \le i \le g$$
since the value of $\frac{z-b_i}{z-b_i^{-1}}$ at $\infty$ is $1$. By defining $Q(x)=P(x^{-1})x^{\alpha+\beta}$ and $Q'(x)=P'(x^{-1})x^{\alpha'+\beta'}$ (which are the reciprocal polynomials of $P(x)$ and $P'(x)$ and have degrees respectively $\alpha$ and $\alpha'$), we see that the hypothesis is equivalent to
$$F(x)=x^{\gamma'}P(x)Q'(x)-x^{\gamma}P'(x)Q(x) \text{ having as zeroes all the } b_i,b_i^{-1}.$$
We have that $\deg F(x) \le \max{(\alpha+\beta+\alpha'+\gamma',\alpha'+\beta'+\alpha+\gamma)} \le 2l \le 2g$, but $F(x)$ has $2g+2$ zeros (the $b_i^{\pm 1}$ plus $\pm 1$), so that $F(x)=0$. We recover
$$x^{\gamma'}P(x)Q'(x)=x^{\gamma}P'(x)Q(x)$$
$$x^{\gamma'+\beta}\prod_{j=1}^{\alpha}(x-z_j)\prod_{j=1}^{\alpha'}(x-z_j'^{-1})=x^{\gamma+\beta'}\prod_{j=1}^{\alpha'}(x-z_j')\prod_{j=1}^{\alpha}(x-z_j^{-1})$$
so that $\gamma'+\beta=\gamma+\beta'$. Our first assumptions imply that either $\beta$ or $\gamma$ is zero and that either $\beta'$ or $\gamma'$ is zero. Any of the four possibilities implies that $\beta=\beta'$ and $\gamma=\gamma'$ (and hence $\alpha=\alpha'$) and therefore
$$\prod_{j=1}^{\alpha}(x-z_j)\prod_{j=1}^{\alpha}(x-z_j'^{-1})=\prod_{j=1}^{\alpha}(x-z_j')\prod_{j=1}^{\alpha}(x-z_j^{-1})$$
Our assumptions also imply that $z_j \neq z_k^{-1}$ for all $1 \le j,k \le \alpha$. By forcing the LHS and RHS to have the same zeroes, we get (up to permutation), $z_j'=z_j$ for $1 \le j \le \alpha$.
\end{proof}

\begin{cor}\label{cor:dimension}
$\dim{i(W_l)}=l$
\end{cor}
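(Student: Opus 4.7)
The plan is to use the preceding proposition to compute the generic fiber of $\psi_l$ and then invoke the fiber dimension theorem. Since $t+u$ gives an isomorphism $\Cv\setminus S\to\P_1(\C)\setminus S^*$, the map $i\circ\phi_l$ is conjugate to $\psi_l:(\P_1(\C)\setminus S^*)^l\to\Gm^g$, so $i(W_l)$ coincides (up to this isomorphism of sources) with $\psi_l\bigl((\P_1(\C)\setminus S^*)^l\bigr)$. The domain is a smooth irreducible quasi-affine variety of dimension $l$, and $\psi_l$ is a morphism, so by Chevalley's theorem $i(W_l)$ is a constructible set whose dimension equals $l$ minus the dimension of a generic fiber.

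The key step is to show that the generic fiber of $\psi_l$ is finite. Take any point $(z_1,\dotsc,z_l)\in(\P_1(\C)\setminus S^*)^l$ satisfying the open conditions $z_j z_k\ne 1$ for all $1\le j,k\le l$ (in particular $z_j\ne 0,\infty$); such points form a non-empty Zariski open subset. The previous proposition, applied with $k=0$, then forces any $(z_1',\dotsc,z_l')$ with $\psi_l(z_1',\dotsc,z_l')=\psi_l(z_1,\dotsc,z_l)$ to be a permutation of $(z_1,\dotsc,z_l)$. Hence the fiber has at most $l!$ elements and is in particular $0$-dimensional.

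Combining these facts, the generic fiber of $\psi_l$ has dimension $0$, so $\dim i(W_l)=l$. Since by hypothesis $l\le g$, the image indeed fits into $\Gm^g$ with the expected dimension. The only real content is the finiteness of the generic fiber, which is immediate from the proposition; no further obstacle arises.
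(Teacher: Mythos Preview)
Your proof is correct and follows essentially the same approach as the paper: both argue that $\psi_l$ has finite fibers over a dense open subset of the domain (the paper phrases this as ``injective'', which is a slight overstatement since permutations act), and then conclude by a dimension count. Your parenthetical remark ``in particular $z_j\ne 0,\infty$'' is not quite accurate---the conditions $z_jz_k\ne 1$ do not by themselves exclude a lone $0$ or $\infty$---but this is harmless, since the proposition still forces the fiber to consist of permutations whenever no pair has product~$1$.
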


\begin{proof}
The set $i(W_l)$ is the image of $\psi_l$, so $\dim{i(W_l)} \le \dim{(\mathbb{P}_1(\mathbb{C}) \setminus S^*)^l}=l$. But $\psi_l$ is injective over a dense open subset of the domain, namely the complement of the union of algebraic sets defined by $z_jz_k-1$ for $1 \le j,k \le l$, so we get equality.
\end{proof}

We are then in the following situation:

\begin{equation*}
0=i(W_0) \sub \dotsb \sub i(W_g) \sub G
\end{equation*}

Moreover $\overline{i(W_g)}=G$ since $G$ is irreducible and they have the same dimension. We have that $W_l \sub W_{l+1}$ since $[\infty_+]$ is the trivial divisor.

\subsection{Equations for $\overline{i(W_{g-1})}$}
We can provide explicit equations for the closure of $\overline{i(W_{g-1})}$.\\

Given complex numbers $u_1, \dotsc ,u_g$, we define the antisymmetric function:

\begin{equation*}
H(u_1, \dotsc ,u_g)=\prod_{i<j} (u_i-u_j)
\end{equation*}

We denote with $A$ a generic element of $\{ 0, 1 \}^g$, with $a_i$ its $i$-th projection and with $a=\sum_{i=1}^g a_i$. We claim that the equation for $\overline{i(W_{g-1})}$ is:

\begin{equation*}
f(x_1, \dotsc ,x_g)=\sum_{A} (-1)^a H(b_1^{1-2a_1}, \dotsc ,b_g^{1-2a_g}) x_1^{a_1} \dotsc x_g^{a_g}
\end{equation*}

\begin{lem}\label{lem:firreducible}
The polynomial $f(x_1, \dotsc ,x_g)$ is irreducible.
\end{lem}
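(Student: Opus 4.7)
The structural fact driving the plan is that $f$ is \emph{multilinear}: each variable $x_i$ appears with degree at most $1$, and since the degree in each variable is additive under products, in any nontrivial factorisation the factors must be multilinear and depend on disjoint sets of variables. Hence a factorisation would take the form $f(x_1, \dotsc, x_g) = g_1(x_I) \, g_2(x_{I^c})$ for some proper nonempty subset $I \sub \{1, \dotsc, g\}$, and I would rule out every such factorisation by a direct comparison of coefficients.

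The coefficient of $\prod_{j \in J} x_j$ in $f$ equals
\[
c_J = (-1)^{|J|} H(y_1^{J}, \dotsc, y_g^{J}),
\]
where $y_i^{J} = b_i^{-1}$ if $i \in J$ and $y_i^{J} = b_i$ if $i \notin J$. A putative factorisation $f = g_1(x_I) g_2(x_{I^c})$ would force the multiplicative identity $c_{J_1} c_{J_2} = c_{J_1'} c_{J_2'}$ for every pair of subsets $J_1, J_2 \sub \{1, \dotsc, g\}$, where $J_1' = (J_1 \cap I) \cup (J_2 \cap I^c)$ and $J_2' = (J_2 \cap I) \cup (J_1 \cap I^c)$. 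I would test this at $J_1 = \emptyset$ and $J_2 = \{k, l\}$, picking $k \in I$ and $l \in I^c$ (both available by the nontriviality of the partition); the signs $(-1)^{|J|}$ cancel and the condition collapses to $H(y^{\emptyset}) H(y^{\{k,l\}}) = H(y^{\{k\}}) H(y^{\{l\}})$.

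When each $H$ is expanded as a product over unordered pairs $\{i, j\}$, the four vectors $y^{\emptyset}, y^{\{k,l\}}, y^{\{k\}}, y^{\{l\}}$ agree on every coordinate outside $\{k, l\}$, so pairs disjoint from $\{k, l\}$ match on both sides, while pairs $\{k, j\}$ or $\{l, j\}$ with $j \notin \{k, l\}$ contribute the same unordered product $(b_k - b_j)(b_k^{-1} - b_j)$ (respectively with $l$) on both sides. What survives is the single identity
\[
(b_k - b_l)(b_k^{-1} - b_l^{-1}) = (b_k - b_l^{-1})(b_k^{-1} - b_l),
\]
and the difference of its two sides factors as $(b_k - b_k^{-1})(b_l - b_l^{-1})$, which is nonzero under the standing hypothesis $\rho_i \ne \pm 1$ (equivalent to $b_i \ne b_i^{-1}$). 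This contradicts the assumed factorisation and proves irreducibility. The main obstacle I foresee is the pair-by-pair cancellation bookkeeping in the final paragraph; everything else is a routine structural argument about multilinear polynomials together with a two-line algebraic check.
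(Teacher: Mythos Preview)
Your proof is correct and follows essentially the same approach as the paper's own proof: both exploit the multilinearity of $f$ to force any factorisation into two factors with disjoint variable sets, then compare the constant term and the coefficients of $x_k$, $x_l$, $x_kx_l$ (with $k$ and $l$ in different parts) to reach the same algebraic identity $(b_k-b_l)(b_k^{-1}-b_l^{-1})=(b_k-b_l^{-1})(b_k^{-1}-b_l)$, contradicted by $b_i\neq\pm1$. Your bookkeeping via the subsets $J$ and the explicit cancellation of the pairs $\{i,j\}$ not meeting $\{k,l\}$ is a slightly more systematic packaging of exactly the computation the paper does by hand.
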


\begin{proof}
$f$ is linear in all its variables, so, if it was reducible, it would be a product of polynomials $f_1$ and $f_2$ both linear in their variables and moreover with distinct appearing variables. For instance, if $x_1$ appears in $f_1$ with coefficient $c_1$ and $x_2$ in $f_2$ with coefficient $c_2$, then the coefficient of $x_1x_2$ in $f$ is $c_1c_2$. Considering also the constant terms of $f_1$ and $f_2$, we conclude that, for $f$, the product of the coefficients of $x_1$ and $x_2$ equals the product of the coefficient of $x_1x_2$ and its constant term. By writing this down:
\begin{equation*}
H(b_1,b_2, \dotsc ,b_g)H(b_1^{-1},b_2^{-1}, \dotsc ,b_g)=H(b_1^{-1},b_2,\dotsc,b_g)H(b_1,b_2^{-1}, \dotsc ,b_g)
\end{equation*}
\begin{equation*}
LHS=(b_1-b_2)(b_1^{-1}-b_2^{-1}) \prod_{i=3}^g \left((b_1-b_i)(b_2-b_i)(b_1^{-1}-b_i)(b_2^{-1}-b_i)\right) \prod_{3=i<j=g}(b_i-b_j)
\end{equation*}
\begin{equation*}
RHS=(b_1^{-1}-b_2)(b_1-b_2^{-1}) \prod_{i=3}^g\left((b_1^{-1}-b_i)(b_2-b_i)(b_1-b_i)(b_2^{-1}-b_i)\right) \prod_{3=i<j=g}(b_i-b_j)
\end{equation*}
\begin{equation*}
(b_1-b_2)(b_1^{-1}-b_2^{-1})=(b_1^{-1}-b_2)(b_1-b_2^{-1})
\end{equation*}
\begin{equation*}
(b_1-b_2)^2=(b_1b_2-1)^2
\end{equation*}
\begin{equation*}
(b_1-1)(b_1+1)(b_2-1)(b_2+1)=0
\end{equation*}
which is impossibile.
\end{proof}

\begin{lem}\label{lem:distinctzeros}
Let $n,m \ge 1$ be integers, and let $p(x_1, \dotsc ,x_n)$ be a polynomial with complex coefficients with degree at most $m$ in each variable. Let $S$ be a set of complex numbers such that $S$ has at least $m+n$ elements. Suppose that for any choice of \textit{distinct} elements $u_1, \dotsc ,u_n \in S$ we have $p(u_1, \dotsc ,u_n)=0$. Then $p(x_1, \dotsc ,x_n)$ is the zero polynomial.
\end{lem}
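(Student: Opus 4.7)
The natural approach is induction on $n$, the number of variables.

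For the base case $n=1$, the hypothesis says $p(x_1)$ is a one-variable polynomial of degree at most $m$ which vanishes on a set $S$ with $|S| \ge m+1$, so $p$ must be the zero polynomial.

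For the inductive step, assume the statement holds for $n-1$ variables. Write
\[
p(x_1,\dotsc,x_n)=\sum_{k=0}^{m}p_k(x_1,\dotsc,x_{n-1})\,x_n^k,
\]
where each coefficient $p_k$ has degree at most $m$ in each of its variables. Fix any choice of distinct elements $u_1,\dotsc,u_{n-1}\in S$ and consider the univariate polynomial $q(x_n):=p(u_1,\dotsc,u_{n-1},x_n)$. By hypothesis $q(v)=0$ for every $v\in S\dif\{u_1,\dotsc,u_{n-1}\}$, and this set has at least $|S|-(n-1)\ge (m+n)-(n-1)=m+1$ elements. Since $\deg q\le m$, we conclude $q\equiv 0$, which forces $p_k(u_1,\dotsc,u_{n-1})=0$ for every $k=0,\dotsc,m$.

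Since this vanishing holds for every choice of distinct $u_1,\dotsc,u_{n-1}\in S$, each $p_k$ satisfies the hypotheses of the lemma with $n-1$ variables and the same set $S$ (noting that $|S|\ge m+n\ge m+(n-1)$). By the inductive hypothesis, each $p_k$ is identically zero, and therefore $p$ is the zero polynomial.

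The proof is essentially routine; the only subtlety is that, because we restrict to \emph{distinct} $u_i$, the reduction step must carefully track that after removing the fixed $n-1$ coordinates we still have at least $m+1$ values of $x_n$ available, which is exactly why the hypothesis $|S|\ge m+n$ (rather than $|S|\ge m+1$) is needed.
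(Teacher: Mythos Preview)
Your proof is correct and follows essentially the same approach as the paper: induction on $n$, expanding in the last variable, fixing distinct $u_1,\dotsc,u_{n-1}\in S$, using the remaining $\ge m+1$ elements to kill the univariate polynomial in $x_n$, and then applying the inductive hypothesis to each coefficient. Your writeup is slightly more explicit about the inequality $|S|\ge m+(n-1)$ needed for the inductive step, but otherwise the arguments coincide.
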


\begin{proof}
We prove this by induction on $n$. If $n=1$ then $p$ is a polinomial of degree at most $m$ which vanishes on $m+1$ complex numbers, so it is identically zero. If $n>1$, let:

\begin{equation*}
p(x_1, \dotsc ,x_n)=\sum_{i=0}^m x_n^i p_i(x_1, \dotsc ,x_{n-1})
\end{equation*}

where for any $0 \le i \le n$ the polynomial $p_i(x_1, \dotsc ,x_{n-1})$ has degree at most $m$ in each variable. Fix a choice of distinct $u_1, \dotsc ,u_{n-1} \in S$. Then $p(u_1, \dotsc ,u_{n-1},x_n)$, as a polynomial in $x_n$, is vanished by hypothesis by the at least $m+1$ remaining elements of $S$, so that it is the zero polynomial and then $p_i(u_1, \dotsc ,u_{n-1})=0$. By varying $u_1, \dotsc ,u_{n-1}$ and using the inductive hypothesis we see that $p_i(x_1, \dotsc ,x_{n-1})=0$, so we're done.
\end{proof}

\begin{lem}\label{lem:containedness}
$i(W_{g-1})$ is contained in the zero locus of $f(x_1, \dotsc ,x_g)$.
\end{lem}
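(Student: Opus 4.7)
The plan is to realise $f$ as the determinant of an explicit $g\times g$ matrix and then, after substituting the coordinates of a point of $i(W_{g-1})$, to exhibit a short linear dependence among the rows that forces the determinant to vanish.

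First I would set $v_i=(1,b_i,b_i^2,\dotsc,b_i^{g-1})^{\top}$ and $w_i=(1,b_i^{-1},b_i^{-2},\dotsc,b_i^{-(g-1)})^{\top}$, and let $M_0$ denote the matrix whose $i$-th column is $v_i-x_iw_i$. Expanding $\det M_0$ by multilinearity in the columns, each subset $A\subseteq\{1,\dotsc,g\}$ contributes $(-1)^{|A|}\prod_{i\in A}x_i$ times the Vandermonde determinant in the entries $c_i^{(A)}=b_i^{1-2a_i}$, which equals $\prod_{i<j}(c_j^{(A)}-c_i^{(A)})=(-1)^{\binom{g}{2}}H(b_1^{1-2a_1},\dotsc,b_g^{1-2a_g})$. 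Therefore $f(x_1,\dotsc,x_g)=(-1)^{\binom{g}{2}}\det M_0$, so $f=0$ at $(x_1,\dotsc,x_g)$ iff $\det M_0=0$.

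Next, a point of $i(W_{g-1})$ has coordinates $x_i=\prod_{j=1}^{g-1}\frac{z_j-b_i}{z_j-b_i^{-1}}$ with $z_j\notin S^*$; setting $p(x)=\prod_{j=1}^{g-1}(x-z_j)$ the common factor $(-1)^{g-1}$ cancels and one gets $x_i=p(b_i)/p(b_i^{-1})$, with both $p(b_i),p(b_i^{-1})\neq 0$. Multiplying the $i$-th column of $M_0$ by the nonzero scalar $p(b_i^{-1})$ produces a matrix $M_1$ with $(k,i)$ entry $p(b_i^{-1})\,b_i^{k-1}-p(b_i)\,b_i^{-(k-1)}$ (for $k=1,\dotsc,g$), and $\det M_1=0$ iff $\det M_0=0$. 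Writing $p(x)=\sum_{k=0}^{g-1}p_k x^k$, the row combination of $M_1$ with coefficients $(p_0,\dotsc,p_{g-1})$ has $i$-th entry
\[
\sum_{k=0}^{g-1}p_k\bigl(p(b_i^{-1})\,b_i^k-p(b_i)\,b_i^{-k}\bigr)=p(b_i^{-1})\,p(b_i)-p(b_i)\,p(b_i^{-1})=0.
\]
Since $p$ is monic of degree $g-1$ the coefficient vector $(p_0,\dotsc,p_{g-1})$ is nonzero, so the rows of $M_1$ are linearly dependent, $\det M_1=0$, and hence $f=0$, as required.

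I anticipate no essential obstacle; the only step demanding some care is the sign bookkeeping in the matrix identification (matching the Vandermonde convention $\prod_{i<j}(c_j-c_i)$ against the antisymmetric $H$, which is $\prod_{i<j}(c_i-c_j)$). Once the determinantal description of $f$ is in place, the argument collapses to the tautology $p(b_i)\,p(b_i^{-1})=p(b_i^{-1})\,p(b_i)$.
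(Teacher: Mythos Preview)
Your argument is correct and considerably slicker than the paper's. The paper proceeds by substituting $x_i=\prod_{j=1}^{g-1}\frac{z_j-b_i}{z_j-b_i^{-1}}$ into $f$, clearing denominators to obtain a polynomial $p(z_1,\dotsc,z_{g-1})$ of degree at most $g$ in each variable, and then invoking the auxiliary Lemma~\ref{lem:distinctzeros}: it checks that $p$ vanishes whenever the $z_j$ are distinct elements of $\{b_1^{\pm1},\dotsc,b_g^{\pm1}\}$, which forces $p\equiv 0$. This requires a small case analysis and the separate combinatorial lemma. Your determinantal identification $f=(-1)^{\binom{g}{2}}\det M_0$ bypasses all of this: once the columns are scaled by $p(b_i^{-1})$, the row with coefficients $(p_0,\dotsc,p_{g-1})$ collapses to zero tautologically. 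What the paper's route buys is that it never needs to recognise $f$ as a determinant; what your route buys is that it makes Lemma~\ref{lem:distinctzeros} unnecessary and explains structurally \emph{why} $f$ vanishes on $i(W_{g-1})$ (the Vandermonde-type matrix has a genuine $g$-th row relation coming from a degree-$(g-1)$ polynomial).

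One small point to tidy: the $z_j$ range over $\P_1(\C)\setminus S^*$, so some $z_j$ may equal $\infty$ (the image of $\infty_+$ under $t+u$), in which case $p(x)=\prod_j(x-z_j)$ is not literally a polynomial. This is harmless: the corresponding factor contributes $1$ to each $x_i$, so simply set $p(x)=\prod_{j:\,z_j\neq\infty}(x-z_j)$, a nonzero monic polynomial of degree at most $g-1$; the identity $x_i=p(b_i)/p(b_i^{-1})$ and the row-dependence argument go through verbatim, since all you need is that the coefficient vector of $p$ is nonzero. Alternatively, the locus where all $z_j$ are finite is Zariski-dense and the zero set of $f$ is closed.
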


\begin{proof}
We have $x_i=\prod_{j=1}^{g-1} \frac{z_j-b_i}{z_j-b_i^{-1}}$ for $1 \le i \le g$, so that our equation becomes:

\begin{equation*}
\sum_{A} (-1)^a H(b_1^{1-2a_1}, \dotsc ,b_g^{1-2a_g}) \prod_{i=1}^g \prod_{j=1}^{g-1} \left(\frac{z_j-b_i}{z_j-b_i^{-1}}\right)^{a_i}
\end{equation*}

Clearing the denominators we get:
$$p(z_1, \dotsc ,z_{g-1})=\sum_{A} (-1)^a H(b_1^{1-2a_1}, \dotsc ,b_g^{1-2a_g}) \prod_{i=1}^g \prod_{j=1}^{g-1} (z_j-b_i^{2a_i-1})$$
We see that $p(z_1, \dotsc ,z_{g-1})$ is a polynomial of degree at most $g$ in each variable and we hope to apply Lemma~\ref{lem:distinctzeros} with $S= \{ b_1^{\pm 1}, \dotsc ,b_g^{\pm 1} \}$.

First of all, if both, say, $b_1$ and $b_1^{-1}$ are chosen, then

\begin{equation*}
\prod_{j=1}^{g-1} (z_j-b_1^{2a_1-1})=0,
\end{equation*}
so we can assume not both $b_i$ and $b_i^{-1}$ being chosen. We assume $z_1=b_1, \dotsc ,z_{g-1}=b_{g-1}$, the other cases being analogous by the usual symmetry between $b_i$ and $b_i^{-1}$.
\begin{equation*}
p(b_1, \dotsc,b_{g-1})=\sum_{A} (-1)^a H(b_1^{1-2a_1}, \dotsc ,b_g^{1-2a_g}) \prod_{i=1}^g \prod_{j=1}^{g-1} (b_j-b_i^{2a_i-1})
\end{equation*}
so that a summand on the right is zero whenever $a_i=1$ for some $1 \le i \le g-1$. So the only nonzero possibilities for $A$ are $a_0= \dotsb =a_{g-1}=0,a_g=0,1$:

\begin{equation*}
p(b_1, \dotsc ,b_{g-1})=H(b_1, \dotsc ,b_{g-1},b_g)\prod_{i=1}^{g-1} \prod_{j=1}^{g-1} (b_j-b_i^{-1}) \cdot \prod_{j=1}^{g-1} (b_j-b_g^{-1})-H(b_1, \dotsc ,b_{g-1},b_g^{-1})\prod_{i=1}^{g-1} \prod_{j=1}^{g-1} (b_j-b_i^{-1}) \cdot \prod_{j=1}^{g-1} (b_j-b_g)
\end{equation*}

The right hand side is then a multiple of:

\begin{equation*}
H(b_1, \dotsc ,b_{g-1},b_g) \prod_{j=1}^{g-1} (b_j-b_g^{-1})-H(b_1, \dotsc ,b_{g-1},b_g^{-1}) \prod_{j=1}^{g-1} (b_j-b_g)
\end{equation*}
\begin{equation*}
\prod_{i<j \le g-1} (b_i-b_j) \left( \prod_{j=1}^{g-1}(b_j-b_g)\prod_{j=1}^{g-1} (b_j-b_g^{-1})-\prod_{j=1}^{g-1}(b_j-b_g^{-1})\prod_{j=1}^{g-1} (b_j-b_g)\right)=0\qedhere
\end{equation*}
\end{proof}

By combining this Lemma, the irreducibility of $f$ and the fact that $i(W_{g-1})$ has dimension $g-1$ (Corollary \ref{cor:dimension}), we get that the desired equation is precisely $f(x_1, \dotsc , x_g)$.

\section{On the closure of the $W_l$}

The points in $W_l$ have an important geometrical description: they are the divisors of the form $[P_1]+ \dotsb + [P_l]$ for $P_1, \dotsc , P_l \in \Cv \dif S$. We need a similar description for the closure $\overline{W_l}$ and it turns out that such a description can be given in terms of the sets $W_l$ attached to a different polynomial $D(t)$.\\

We recall that $D(t)=(t^2-1)D_1(t)^2$ with $D_1(t)=(t-\rho_1) \dotsm (t-\rho_g)$. Let us define
\begin{equation*}
D_1^{(j)}(t)=(t-\rho_1) \dotsm (t-\rho_{j-1})(t-\rho_{j+1}) \dotsm (t-\rho_g)
\end{equation*}
the polynomial obtained omitting the $j$-th factor from $D_1(t)$,
so that $D_1(t)=D_1^{(j)}(t) (t-\rho_j)$ and we define $D^{(j)}(t)=(t^2-1)(D_1^{(j)}(t)^2)$. We denote with the superscript $(j)$ all the quantities associated to $D^{(j)}(t)$ corresponding to those associated to $D(t)$. In particular, the $\xi_i^{\pm}$ associated to $D^{(j)}(t)$ will be the same associated to $D(t)$ with the exception of $\xi_j^{\pm}$, which are omitted. We use the same determination of the square roots $\sqrt{\rho_i^2-1}$ for $i \neq j$, associated the common roots for $D(t)$ and $D^{(j)}(t)$.\\

Let $G^{(j)}$ be defined the same way as $G$, but for $D^{(j)}(t)$. Having $\emme^{(j)}$ the divisor such that $\emme = (\xi_j^+) + (\xi_j^-) + \emme^{(j)}$, we have an isomorphism with $\Gm^{g-1}$, given by the map:

\begin{equation*}
i^{(j)}: \Div_{\emme^{(j)}}^0(\Cv) \rightarrow \Gm^{g-1}
\end{equation*}
\begin{equation*}
i^{(j)}(\div f)= \left( \frac{f(\xi_1^+)}{f(\xi_1^-)}, \dotsc, \frac{f(\xi_{j-1}^+)}{f(\xi_{j-1}^-)}, \frac{f(\xi_{j+1}^+)}{f(\xi_{j+1}^-)}, \dotsc, \frac{f(\xi_g^+)}{f(\xi_g^-)} \right)
\end{equation*}

Moreover, taking a divisor $\div f \in \Div_{\emme}^0(\Cv) \sub \Div_{\emme^{(j)}}^0(\Cv)$, its images in $G$ and $G^{(j)}$ are related by the projection $\pi_j: G \rightarrow G^{(j)}$ which forgets the $j$-th coordinate (as a map from $\Gm^{g}$ to $\Gm^{g-1}$).\\

We can now give a description the closure of $W_l$.

\begin{prop}\label{prop:closurewl}
$\overline{W_l}$ is the union of $W_l$ and 
\begin{equation*}
\delta \bigcup_{j=1}^g \pi_j^{-1} \left(\overline{W^{(j)}_{l-2}}\right)
\end{equation*}
\end{prop}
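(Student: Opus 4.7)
The plan is to prove both inclusions separately, exploiting the explicit description $\psi_l(z_1,\dotsc,z_l)_i=\prod_k\frac{z_k-b_i}{z_k-b_i^{-1}}$ together with the key identity
\begin{equation*}
\frac{b_j-b_i}{b_j-b_i^{-1}}\cdot\frac{b_j^{-1}-b_i}{b_j^{-1}-b_i^{-1}}=b_i^2\qquad(i\neq j),
\end{equation*}
which is immediate from multiplying numerator and denominator by $b_i^2$. Geometrically, this says that a pair of points collapsing to $\xi_j^+$ and $\xi_j^-$ contributes exactly $\delta$ to every coordinate $i\neq j$ (and a direction-dependent element of $\Gm$ to the $j$-th coordinate). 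Throughout I will write $\delta^{(j)}:=\pi_j(\delta)$, which is the analogue of $\delta$ for $D^{(j)}$.

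For the inclusion $\supseteq$, the containment $W_l\subset\overline{W_l}$ is immediate. To show $\delta+\pi_j^{-1}(\overline{W_{l-2}^{(j)}})\subset\overline{W_l}$ for each fixed $j$, I would take $y$ in this set, write $\pi_j(y)=\delta^{(j)}+\bar y$ with $\bar y\in\overline{W_{l-2}^{(j)}}$, and approximate $\bar y=\lim_n\phi_{l-2}^{(j)}(P_3^{(n)},\dotsc,P_l^{(n)})$ with all $P_k^{(n)}\in\Cv\dif S$ (possible because $(\Cv\dif S)^{l-2}$ is open dense in $(\Cv\dif S^{(j)})^{l-2}$ and $\phi_{l-2}^{(j)}$ is continuous). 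I would then complete with $P_1^{(n)},P_2^{(n)}\in\Cv\dif S$ whose $(t+u)$-coordinates are $z_1^{(n)}=b_j+\epsilon_n$ and $z_2^{(n)}=b_j^{-1}+\eta_n$ for suitable sequences $\epsilon_n,\eta_n\to 0$; the expansion
\begin{equation*}
\frac{z_1^{(n)}-b_j}{z_1^{(n)}-b_j^{-1}}\cdot\frac{z_2^{(n)}-b_j}{z_2^{(n)}-b_j^{-1}}=-\frac{\epsilon_n}{\eta_n}\bigl(1+o(1)\bigr)
\end{equation*}
shows that tuning the ratio $\eta_n/\epsilon_n$ controls the $j$-th coordinate limit, while for $i\neq j$ the boxed identity forces the $i$-th coordinate to converge to $b_i^2\bar y_i=y_i$.

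For the reverse inclusion, I would take $y\in\overline{W_l}$ with $\phi_l(P_1^{(n)},\dotsc,P_l^{(n)})\to y$ in $\Gm^g$, and pass to a subsequence via compactness of $\Cv^l$ so that $P_k^{(n)}\to P_k^\infty\in\Cv$ for every $k$. If all $P_k^\infty\in\Cv\dif S$, continuity gives $y=\phi_l(P_1^\infty,\dotsc,P_l^\infty)\in W_l$. Otherwise, valuation bookkeeping in the $j'$-th coordinate $\prod_k\frac{z_k^{(n)}-b_{j'}}{z_k^{(n)}-b_{j'}^{-1}}$ forces, for every $j'$, the number of indices with $P_k^\infty=\xi_{j'}^+$ to equal the number with $P_k^\infty=\xi_{j'}^-$ (otherwise unmatched zeros or poles would push the limit out of $\Gm$). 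Picking any such $j'=j$ realizing a pair and relabelling so that $P_1^\infty=\xi_j^+,P_2^\infty=\xi_j^-$, I would apply $\pi_j$; from $\pi_j\circ\phi_l=\phi_l^{(j)}$ together with the factorization $\phi_l^{(j)}=\phi_2^{(j)}(P_1,P_2)+\phi_{l-2}^{(j)}(P_3,\dotsc,P_l)$, the first term tends to $\delta^{(j)}$ by the identity (now a finite limit because $i\neq j$), so the second converges to $\pi_j(y)-\delta^{(j)}\in\overline{W_{l-2}^{(j)}}$, placing $y$ in $\delta+\pi_j^{-1}(\overline{W_{l-2}^{(j)}})$.

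The main obstacle I anticipate is the rate-choice construction in the forward inclusion: one must verify that $\epsilon_n,\eta_n\to 0$ can be selected so that all $g$ coordinates of $\phi_l$ converge simultaneously to the prescribed values of $y$ while keeping $P_1^{(n)},P_2^{(n)}$ inside $\Cv\dif S$. The balance-of-multiplicities step in the backward direction is routine once one records that in $\Gm=\C^*$ convergence is incompatible with unmatched zeros or poles.
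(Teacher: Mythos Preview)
Your approach is essentially the same as the paper's: both inclusions are proved analytically via the explicit parametrisation $\psi_l$, the key identity $\frac{b_j-b_i}{b_j-b_i^{-1}}\cdot\frac{b_j^{-1}-b_i}{b_j^{-1}-b_i^{-1}}=b_i^2$, compactness to extract convergent subsequences, and the rate-tuning of two points converging to $\xi_j^\pm$ to realise an arbitrary $j$-th coordinate.

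One correction is needed in the backward inclusion. The claim that ``valuation bookkeeping forces, for every $j'$, the number of indices with $P_k^\infty=\xi_{j'}^+$ to equal the number with $P_k^\infty=\xi_{j'}^-$'' is false as stated: the convergence is analytic, not valuative, and rates can compensate. For instance, $z_1^{(n)}=b_j+1/n$, $z_2^{(n)}=b_j+1/n^2$, $z_3^{(n)}=b_j^{-1}+1/n^3$ give two limits at $\xi_j^+$, one at $\xi_j^-$, yet the $j$-th coordinate product converges in $\Gm$. What is true, and is all you actually use, is the weaker statement (which is exactly what the paper argues): if some $P_k^\infty=\xi_j^+$, then the $j$-th coordinate has a factor tending to $0$, so for the product to stay in $\Gm$ at least one other factor must tend to $\infty$, i.e.\ some $P_{k'}^\infty=\xi_j^-$. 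This yields one matched pair, which is enough to peel off and land in $\delta+\pi_j^{-1}(\overline{W^{(j)}_{l-2}})$; any further points limiting into $S$ are absorbed into the closure $\overline{W^{(j)}_{l-2}}$ rather than handled at this step. With that adjustment your argument is correct.
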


\begin{remark}\label{remark:w-1}
We set $W_{-1}$ (and $W_{-2}$) to be empty. In fact, $W_1$ is closed, since its limit points outside of $W_1$ would necessarily correspond to $z_1=b_i^{\pm}$, that is, either a zero or a pole of $\frac{z_1-b_i}{z_1-b_i^{-1}}$.
\end{remark}

\begin{proof}
It is clear that $W_l \sub \overline{W_l}$. Let us show that, say, $\delta \pi_1^{-1} \left(\overline{W^{(1)}_{l-2}}\right) \sub \overline{W_l}$. We consider everything in $\Gm^{g}$ and $\Gm^{g-1}$ via $i$ and $i^{(1)}$. Since our ground field is $\C$, our sets are constructible and, in particular, the analytic closure is the same as the Zariski closure. The points of $W^{(1)}_{l-2}$ are of the form:

\begin{equation*}
\left( \prod_{k=1}^{l-2} \frac{z_k-b_2}{z_k-b_2^{-1}}, \dotsc , \prod_{k=1}^{l-2} \frac{z_k-b_g}{z_k-b_g^{-1}} \right)
\end{equation*}

For $z_1,\dotsc,z_{l-2}$ in $\P_1(\C) \dif S^{*(1)}=\P_1(\C) \dif \{b_2^{\pm 1}, \dotsc , b_g^{\pm 1} \}$. The points of $\overline{W^{(1)}_{l-2}}$ are hence limits (with respect to the euclidean metric) of:

\begin{equation*}
\left( \prod_{k=1}^{l-2} \frac{z_k^{(n)}-b_2}{z_k^{(n)}-b_2^{-1}}, \dotsc , \prod_{k=1}^{l-2} \frac{z_k^{(n)}-b_g}{z_k^{(n)}-b_g^{-1}} \right)
\end{equation*}

For sequences $z_1^{(n)},\dotsc,z_{l-2}^{(n)}$ in $\P_1(\C) \dif S^{*(l)}$. By passing to subsequences, since $\P_1(\C)$ is compact, we can assume that $z_k^{(n)}$ converges to some value in $\P_1(\C)$ (but it might be in $S^{*(l)}$). Moreover, since none of the $b_i^{\pm 1}$ is the same as $b_1^{\pm 1}$ for $i \ge 2$, we can slightly alter the sequences $z_k^{(n)}$ so that no value is exactly $b_1$ or $b_1^{-1}$. We claim that all the points of $\delta \pi_1^{-1} \left(\overline{W^{(1)}_{l-2}}\right)$ can be expressed as a limit of:

\begin{equation*}
\left( \frac{x^{(n)}-b_1}{x^{(n)}-b_1^{-1}} \cdot \frac{y^{(n)}-b_1}{y^{(n)}-b_1^{-1}} \cdot \prod_{k=1}^{l-2}  \frac{z_k^{(n)}-b_1}{z_k^{(n)}-b_1^{-1}}, \dotsc ,
\frac{x^{(n)}-b_i}{x^{(n)}-b_i^{-1}} \cdot \frac{y^{(n)}-b_i}{y^{(n)}-b_i^{-1}} \cdot
\prod_{k=1}^{l-2} \frac{z_k^{(n)}-b_i}{z_k^{(n)}-b_i^{-1}}, \dotsc \right)
\end{equation*}

For sequences $z_k^{(n)},x^{(n)},y^{(n)}$ in $\P_1(\C) \dif S^*$. This is an element of $\overline{W_l}$. If we have sequences $x^{(n)}$ and $y^{(n)}$ that converge to $b_1$ and $b_1^{-1}$ respectively, we have:

\begin{equation*}
\frac{x^{(n)}-b_i}{x^{(n)}-b_i^{-1}} \cdot \frac{y^{(n)}-b_i}{y^{(n)}-b_i^{-1}} \rightarrow b_i^2
\end{equation*}

Since every point of $\overline{W^{(1)}_{l-2}}$ is obtained as a limit as above, we choose the same sequences $z_k^{(n)}$ (avoiding all the $b_i^{\pm 1}$, $b_1^{\pm 1}$ included) and we obtained the desired point for the last $g-1$ coordinates. We can fix the first coordinate since we can decide how the $x^{(n)}$ and $y^{(n)}$ approach $b_1$ and $b_1^{-1}$: for instance, if $x^{(n)}=b_1+X_n$ and $y^{(n)}=b_1^{-1}+Y_n$, we have that (consider $X_n$ and $Y_n$ small):

\begin{equation*}
\frac{x^{(n)}-b_1}{x^{(n)}-b_1^{-1}} \cdot \frac{y^{(n)}-b_1}{y^{(n)}-b_1^{-1}} \cdot \prod_{k=1}^{l-2}  \frac{z_k^{(n)}-b_1}{z_k^{(n)}-b_1^{-1}}= \frac{X_n}{Y_n} \cdot \frac{-b_1+b_1^{-1}+Y_n}{b_1-b_1^{-1}+X_n} \cdot \prod_{k=1}^{l-2}  \frac{z_k^{(n)}-b_1}{z_k^{(n)}-b_1^{-1}}
\end{equation*}

So that a suitable choice of small $X_n,Y_n$ makes the coordinate converge to any prescribed nonzero complex number (notice that $b_1 \neq b_1^{-1}$, since the $b_i$ are never $\pm 1$). This proves one inclusion.\\

For the other inclusion, consider a point of $\overline{W_l} \dif W_l$. This must be a limit of the form:

\begin{equation*}
\left( \prod_{k=1}^{l} \frac{z_k^{(n)}-b_1}{z_k^{(n)}-b_1^{-1}}, \dotsc , \prod_{k=1}^{l} \frac{z_k^{(n)}-b_i}{z_k^{(n)}-b_i^{-1}}, \dotsc \right)
\end{equation*}

With the $z_k^{(n)}$ contained in $\P_1(\C) \dif S$. As before, we can assume $z_k^{(n)}$ converging to some value in $\P_1(\C)$. If none of these values lies in $S$, then such limit is actually an element of $W_l$ (just choose the $z_k$ to be the limits of the $z_k^{(n)}$). So, say, assume that $z_1^{(n)}$ converges to $b_1$. Since the limit of the first coordinate is a nonzero complex number, necessarily one of the limits of the other sequences is $b_1^{-1}$, hence we can assume that $z_2^{(n)}$ converges to $b_1^{-1}$. As before, we have that, for $i \ge 2$:

\begin{equation*}
\frac{z_1^{(n)}-b_i}{z_1^{(n)}-b_i^{-1}} \cdot \frac{z_2^{(n)}-b_i}{z_2^{(n)}-b_i^{-1}} \rightarrow b_i^2
\end{equation*}

Thus, the projection $\pi_1$ sends our limit point to:

\begin{equation*}
\left( b_2^2 \prod_{k=3}^{l} \frac{z_k^{(n)}-b_2}{z_k^{(n)}-b_2^{-1}}, \dotsc , b_g^2 \prod_{k=3}^{l} \frac{z_k^{(n)}-b_g}{z_k^{(n)}-b_g^{-1}} \right)
\end{equation*}

Which is a point of $\overline{W_{l-2}^{(1)}}$ multiplied by the last $g-1$ components of $\delta$. This proves the reverse inclusion.
\end{proof}

\begin{cor}\label{corollary:powersofdelta}
If $\delta^n \in \overline{W_l} \dif W_l$, then there is some $j$ such that $\delta^{(j)n-1} \in \overline{W^{(j)}_{l-2}}$.
\end{cor}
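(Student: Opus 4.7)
The plan is to read off the corollary directly from Proposition~\ref{prop:closurewl}. That proposition decomposes
\[
\overline{W_l} = W_l \cup \bigcup_{j=1}^{g} \delta \cdot \pi_j^{-1}\bigl(\overline{W^{(j)}_{l-2}}\bigr).
\]
Since $\delta^n$ is assumed to lie in $\overline{W_l} \setminus W_l$, there must be an index $j$ with $\delta^n \in \delta \cdot \pi_j^{-1}(\overline{W^{(j)}_{l-2}})$. Translating by $\delta^{-1}$ (legal because $G$ is a group) yields $\delta^{n-1} \in \pi_j^{-1}(\overline{W^{(j)}_{l-2}})$, and applying $\pi_j$ gives $\pi_j(\delta^{n-1}) \in \overline{W^{(j)}_{l-2}}$.

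The remaining step is to identify $\pi_j(\delta^{n-1})$ with $(\delta^{(j)})^{n-1}$. Since $\pi_j$ is a group homomorphism, it is enough to check $\pi_j(\delta) = \delta^{(j)}$, and this is the naturality already recorded in the setup: the divisor $(\infty_-)-(\infty_+)$ lies in both $\Div^0_\emme(\Cv)$ and $\Div^0_{\emme^{(j)}}(\Cv)$, and under the isomorphisms $i$ and $i^{(j)}$ its classes become $(b_1^2,\dotsc,b_g^2)$ and $(b_1^2,\dotsc,\widehat{b_j^2},\dotsc,b_g^2)$ respectively (using the same determinations of the square roots for the indices kept in common). These two tuples are manifestly related by the coordinate-forgetting map $\pi_j$. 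Substituting back gives $(\delta^{(j)})^{n-1} \in \overline{W^{(j)}_{l-2}}$, which is the desired conclusion.

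There is essentially no obstacle here: the entire content of the corollary is packaged in Proposition~\ref{prop:closurewl}, and the only extra verification is the functoriality of the construction of $\delta$ under removing one pair $\xi_j^{\pm}$ from the modulus, which is transparent from the explicit formulas. The boundary cases $l \in \{0,1\}$ are vacuous by Remark~\ref{remark:w-1}, since then $\overline{W_l} = W_l$ and the hypothesis cannot hold.
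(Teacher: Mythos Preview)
Your proof is correct and follows exactly the same route as the paper: it invokes Proposition~\ref{prop:closurewl} to place $\delta^n$ in some $\delta\cdot\pi_j^{-1}(\overline{W^{(j)}_{l-2}})$, then translates and projects. You are simply more explicit than the paper (which condenses everything into ``this is equivalent to the thesis'') in spelling out the identification $\pi_j(\delta)=\delta^{(j)}$ and in noting the vacuity for $l\le 1$.
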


\begin{proof}
If $\delta^n \in \overline{W_l} \dif W_l$, then by Proposition \ref{prop:closurewl}, there is $j$ such that $\delta^{n-1} \in \pi_{j}^{-1} \left( \overline{W^{(j)}_{l-2}} \right)$. This is equivalent to the thesis.
\end{proof}

\section{Geometry of partial quotients}\label{sec:GeoParQuot}

\subsection{Preliminary lemmas}
Let us first recall the following result, which is explained in  \cite{PT00}*{a few lines before Proposition 2.1}. Let $(p_n(t),q_n(t))$ be the sequence of convergents of a function $f(t) \in \C((t^{-1}))$ and let $a_n(t)$ be the corresponding partial quotients. Then

\begin{lem}\label{lem:qnqn1}
$p_n(t)-q_n(t)f(t) \text{ vanishes at infinity with order } \deg q_n(t) + \deg a_n(t) = \deg q_{n+1}(t)$
\end{lem}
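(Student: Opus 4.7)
The plan is to derive an explicit closed-form formula for the remainder $\beta_n(t) := p_n(t) - q_n(t)f(t)$ in terms of the complete quotient $\alpha_n$, and then read off its valuation at infinity directly.

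First I would prove, by induction on $n \ge 1$, the classical identity
\[
f = \frac{\alpha_n\, p_n + p_{n-1}}{\alpha_n\, q_n + q_{n-1}},
\]
where $\alpha_n$ is the complete quotient from the excerpt, so $a_n = \lfloor \alpha_n \rfloor$. The base case is just the defining relation $f = \alpha_0 = a_0 + 1/\alpha_1$; the inductive step uses $\alpha_n = a_n + 1/\alpha_{n+1}$ combined with the recursions for $p_{n+1}, q_{n+1}$. (One should keep in mind that, with the paper's indexing, $p_n/q_n$ corresponds to the partial quotients $a_0,\dotsc,a_{n-1}$, not $a_0,\dotsc,a_n$.)

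Substituting this formula for $f$ into $\beta_n$ and clearing denominators, the $\alpha_n$-terms cancel and one obtains
\[
\beta_n = \frac{p_n q_{n-1} - q_n p_{n-1}}{\alpha_n q_n + q_{n-1}}.
\]
A short induction on the $(p,q)$-recursion shows $p_n q_{n-1} - q_n p_{n-1} = (-1)^n$, so the numerator is a nonzero constant and the valuation of $\beta_n$ equals the negative of the degree of the denominator.

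It then remains to analyze that degree. Since $\lfloor\alpha_n\rfloor = a_n$ we have $\alpha_n = a_n + O(t^{-1})$, hence $\deg\alpha_n = \deg a_n$ in $\C((t^{-1}))$; moreover, for $n\ge 1$, $\alpha_n$ is the reciprocal of a series of strictly negative degree, so $\deg a_n \ge 1$. A trivial induction on the $q$-recursion also gives the strict inequality $\deg q_{n-1} < \deg q_n$. Combining these, the leading term of the denominator comes from $a_n q_n$, so
\[
\deg(\alpha_n q_n + q_{n-1}) = \deg a_n + \deg q_n,
\]
which by the recursion $q_{n+1} = a_n q_n + q_{n-1}$ (and again $\deg q_{n-1} < \deg q_n$) equals $\deg q_{n+1}$. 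This yields both equalities in the statement and proves that $\beta_n$ vanishes at infinity with order exactly $\deg q_{n+1}$.

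The whole argument is bookkeeping with no real obstacle; the one place where care is needed is tracking the paper's shifted indexing throughout the inductive identities.
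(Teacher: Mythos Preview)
Your argument is correct and is the standard classical derivation: the identity $f=(\alpha_n p_n+p_{n-1})/(\alpha_n q_n+q_{n-1})$ together with $p_nq_{n-1}-q_np_{n-1}=(-1)^n$ gives $p_n-q_n f=(-1)^n/(\alpha_n q_n+q_{n-1})$, and the degree count is then immediate from $\deg a_n\ge 1$ and $\deg q_{n-1}<\deg q_n$.

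There is, however, nothing to compare against in the paper itself: the paper does not prove this lemma but simply recalls it from \cite{PT00} (``a few lines before Proposition~2.1''). Your write-up supplies precisely the short self-contained proof the paper omits. The one implicit assumption worth flagging is that the expansion does not terminate (so that $\alpha_{n+1}$ is defined at every step); this is automatic in the paper's setting since $f=\sqrt{D(t)}$ is irrational in $\C((t^{-1}))$.
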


Let us now get back to our $D(t)=(t-1)^2(t-\rho_1)^2 \dots (t-\rho_g)^2$. We set $d=g+1$. Any regular function on the affine part of $\mathcal{C}$ is of the form $p(t)-uq(t)$, where $p(t),q(t) \in \mathbb{C}[t]$. We have an easy criterion to determine whether $D_1(t)$ divides $q(t)$.

\begin{lem}\label{lem:pre1}
Let $f(t,u)=p(t)-uq(t)$ as above and suppose that the support of $\div f$ is disjoint from $\mathfrak{m}$. Then $D_1(t)$ divides $q(t)$ if and only if the image of $\div f$ in $G$ is zero.
\end{lem}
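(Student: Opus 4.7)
The plan is to translate the condition ``image of $\div f$ in $G$ is zero'' directly through the explicit isomorphism $\iota: G \to \Gm^g$ of Proposition~\ref{prop:smallgeneralizedjacobian}, and observe that the resulting equations say precisely that $q(t)$ vanishes at each $\rho_i$.

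More concretely, I would proceed as follows. By Proposition~\ref{prop:smallgeneralizedjacobian}, the class of $\div f$ in $G$ is trivial if and only if
\[
\frac{f(\xi_i^+)}{f(\xi_i^-)}=1 \qquad \text{for all } i=1,\dotsc,g,
\]
that is, if and only if $f(\xi_i^+)=f(\xi_i^-)$ for every $i$. Using the explicit coordinates of $\xi_i^\pm=(\rho_i,\pm\sqrt{\rho_i^2-1})$ and the form $f(t,u)=p(t)-u\,q(t)$, we compute
\[
f(\xi_i^+)-f(\xi_i^-) = -2\sqrt{\rho_i^2-1}\,q(\rho_i).
\]
Since by our standing assumptions $\rho_i\neq\pm 1$, the factor $\sqrt{\rho_i^2-1}$ is nonzero, so the vanishing of this expression is equivalent to $q(\rho_i)=0$. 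Requiring this for every $i=1,\dotsc,g$ is the same as requiring that $D_1(t)=\prod_{i=1}^g(t-\rho_i)$ divides $q(t)$, which gives the lemma.

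I also need to check that the application of $\iota$ is legitimate: the hypothesis that the support of $\div f$ is disjoint from $\emme$ guarantees that $\div f\in\Div^0_\emme(\Cv)$, so $[\div f]\in J_\emme$ and hence has a well-defined image in $G$; and the computation of $\iota([\div f])$ via evaluations of $f$ at the points $\xi_i^\pm$ is exactly how $i$ is defined on divisors of degree zero on a genus-zero curve. There is no real obstacle here: the lemma is essentially a direct unpacking of the isomorphism $\iota$ combined with the specific form $p(t)-u\,q(t)$ of the regular functions on the affine part of $\Cv$. The only small point to be careful about is the sign/nonvanishing of $\sqrt{\rho_i^2-1}$, which is ensured by the hypothesis $\rho_i\neq\pm 1$ made at the start of Section~3.
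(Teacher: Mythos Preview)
Your proof is correct and follows essentially the same route as the paper: both apply the explicit isomorphism $\iota$ (Proposition~\ref{prop:smallgeneralizedjacobian}) to reduce the statement to the equality $f(\xi_i^+)=f(\xi_i^-)$, then plug in $f(t,u)=p(t)-u\,q(t)$ and use $\rho_i\neq\pm 1$ to conclude $q(\rho_i)=0$. The only cosmetic difference is that you compute the difference $f(\xi_i^+)-f(\xi_i^-)$ directly, whereas the paper writes out the ratio $f(\xi_i^+)/f(\xi_i^-)$ in each direction separately.
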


\begin{proof}
Suppose $D_1(t)$ divides $q(t)$. Then by using the isomorphism $i$:

\begin{equation*}
\left(\dotsc,\frac{f(\xi_{\rho_i}^+)}{f(\xi_{\rho_i}^-)},\dotsc\right)=\left(\dotsc,\frac{p(\rho_i)-\sqrt{\rho_i^2-1} \cdot q(\rho_i)}{p(\rho_i)+\sqrt{\rho_i^2-1} \cdot q(\rho_i)},\dotsc\right)=\left(\dotsc,\frac{p(\rho_i)}{p(\rho_i)},\dotsc\right)=(1,\dotsc,1)
\end{equation*}

Vice-versa, if the image is zero, then

\begin{equation*}
p(\rho_i)+\sqrt{\rho_i^2-1} \cdot q(\rho_i) = p(\rho_i)-\sqrt{\rho_i^2-1} \cdot q(\rho_i) \text{ for every } i
\end{equation*}

and, since $\rho_i \neq \pm 1$, we have the thesis.
\end{proof}

We now want to detect the couples $(p,q)$ which are convergents of $\sqrt{D(t)}$. We have the following:

\begin{lem}\label{lem:pre2}
Let $(p_n(t),q_n(t))$ be the convergents of $\sqrt{D(t)}$ and let $a_n(t)$ be its partial quotients. We have that

\begin{equation*}
\div (p_n(t)-uD_1(t)q_n(t))=(\deg q_n(t) + \deg a_n(t))(\infty_+)-(\deg q_n(t) + d)(\infty_-)+\sum_{i=1}^{d- \deg a_n} (x_i)
\end{equation*}

with $x_i$ on the affine part of $\mathcal{C}$
\end{lem}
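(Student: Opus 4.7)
The plan is to compute $\div f$ for $f(t,u) = p_n(t) - u D_1(t) q_n(t)$ directly by determining its behaviour at the two points at infinity and then using the degree-zero condition for principal divisors to count the affine zeros.

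First, I would observe that $f$ is a polynomial in $t$ and $u$, hence regular on the affine chart of $\mathcal{C}$, so its divisor there is an effective sum of affine points; consequently, $\div f$ is entirely determined by $\mathrm{ord}_{\infty_+}(f)$, $\mathrm{ord}_{\infty_-}(f)$ and the constraint $\deg(\div f)=0$. Since the double cover $t:\mathcal{C}\to\mathbb{P}^1$ is ramified only at $t=\pm 1$ (where $u=0$), both $\infty_+$ and $\infty_-$ are unramified, and $1/t$ is a local parameter at each of them.

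Next I would examine the local behaviour of $u$: near $\infty_+$ one has $u = \sqrt{t^2-1} = t-\tfrac{1}{2t}-\cdots$, so $u D_1(t)$ agrees with the Laurent series $\sqrt{D(t)}$ chosen to define the continued fraction expansion; thus the local expansion of $f$ at $\infty_+$ is exactly $p_n(t)-q_n(t)\sqrt{D(t)}$, which by Lemma~\ref{lem:qnqn1} vanishes with order $\deg q_n+\deg a_n$. Near $\infty_-$ one has instead $u=-\sqrt{t^2-1}$, so $f$ restricts to $p_n(t)+q_n(t)\sqrt{D(t)}$; because $p_n/q_n$ is a convergent of $\sqrt{D(t)}$, the polynomial $p_n(t)$ and the Laurent series $q_n(t)\sqrt{D(t)}$ share the same leading term of degree $\deg q_n+d$, and so their sum has degree exactly $\deg q_n+d$ in $t$. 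Since $\mathrm{ord}_{\infty_-}(t)=-1$, this gives $\mathrm{ord}_{\infty_-}(f)=-(\deg q_n+d)$.

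Finally, I would impose $\deg(\div f)=0$: the affine zeros of $f$ must contribute
\[
(\deg q_n + d) - (\deg q_n + \deg a_n) = d - \deg a_n
\]
to the total, yielding the asserted equality. There is no real obstacle in this argument; the only input beyond bookkeeping is Lemma~\ref{lem:qnqn1}, which supplies the exact order of vanishing at $\infty_+$, while the order at $\infty_-$ follows from the elementary fact that the two leading terms add rather than cancel there.
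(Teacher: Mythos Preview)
Your proof is correct and follows essentially the same strategy as the paper: expand at $\infty_+$ using Lemma~\ref{lem:qnqn1}, determine the pole order at $\infty_-$, and count affine zeros via the degree-zero condition. The only minor difference is that you compute $\mathrm{ord}_{\infty_-}(f)$ directly by observing that the leading terms of $p_n$ and $q_n\sqrt{D}$ add rather than cancel, whereas the paper obtains it by summing $f$ with its Galois conjugate to get $2p_n(t)$ and reading off the pole order from $\deg p_n$.
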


\begin{proof}
Expressing $p_n(t)-uD_1(t)q_n(t)$ as a power series at $(\infty_+)$ (with parameter $\frac{1}{t}$) we get the expansion of
\begin{equation*}
p_n(t)-\sqrt{D(t)}q_n(t)
\end{equation*}

which vanishes at $(\infty_+)$ with order $\deg q_n(t) + \deg a_n(t)$. Its only pole is then at $(\infty_-)$, say with order $e$, since it is a regular function on the affine part of $\mathcal{C}$. Its conjugate (with respect to $(t,u) \rightarrow (t,-u)$) has the same property with $(\infty_+)$ instead and by summing them we have:
\begin{equation*}
\div (2p_n(t))=-e(\infty_+)-e(\infty_-)+ \text{ zeroes}
\end{equation*}

So $2e=$ deg $p_n(t)$ and $e=\text{deg }q_n(t) + d$. The remaining zeroes are forced to be on the affine part.
\end{proof}

\begin{remark}\label{rmk:pelliandegree}
Note that this proof implies that deg $a_n(t) \le d$. Moreover, if for some $n$ we have equality, then $\ div(p_n(t)-uD_1(t)q_n(t))$ has support disjoint from $\mathfrak{m}$ and has zero image in $G$, so that the $(\text{deg }q_n(t) + d)$-th power of $\delta$ is zero. In other words, equality occurs if and only if $D(t)$ is pellian (the other arrow being obvious).
\end{remark}

\begin{remark}
It is not guaranteed that the $(x_i)$ do not belong to the support of $\mathfrak{m}$, but if it was so, say $x_i=(\rho_i,\sqrt{\rho_i^2-1})$, we would have $p_n(\rho_i)=0$, so that $\left( \frac{p_n(t)}{t-\rho_i},q_n(t) \right)$ would be a convergent of $\frac{\sqrt{D(t)}}{t-\rho_i}$.
\end{remark}

We have a corresponding vice-versa lemma:

\begin{lem}\label{lem:pre3}
Let $p(t),q(t) \in \mathbb{C}[t]$ such that the order of $p(t)-uD_1(t)q(t)$ at $(\infty_+)$ is deg $q(t)+l$ for some positive integer $l$. Then $(p(t),q(t))$ is of the form $(r(t)p_n(t),r(t)q_n(t))$ where $(p_n(t),q_n(t))$ are convergents of $\sqrt{D(t)}$ and deg $a_n(t) \ge l$. In particular, if $p(t)$ and $q(t)$ are coprime, then $p(t)/q(t)$ is a convergent of $\sqrt{D(t)}$.
\end{lem}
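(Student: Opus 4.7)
The plan is to reduce directly to Proposition~\ref{Pro:ConvergentCondition}, after extracting the common factor of $p$ and $q$.

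First I would convert the hypothesis into an estimate on the Laurent tail of $p(t) - q(t)\sqrt{D(t)}$. Near $\infty_+ = (1{:}1{:}0)$ the affine coordinate $u$ satisfies $u/t \to 1$, so $u$ is the branch of $\sqrt{t^2-1}$ whose expansion in $t^{-1}$ starts with $+t$; hence $uD_1(t)$ agrees with $\sqrt{D(t)}$ as a Laurent series in $t^{-1}$ at $\infty_+$, and the hypothesis becomes
\[
p(t) - q(t)\sqrt{D(t)} = O\!\left(t^{-\deg q - l}\right).
\]

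Next, let $r(t) = \gcd(p,q)$ and write $p = r\tilde p$, $q = r\tilde q$ with $\gcd(\tilde p,\tilde q) = 1$. Dividing through by $r$ (whose leading term at infinity is $t^{\deg r}$) shifts the Laurent order and gives
\[
\tilde p(t) - \tilde q(t)\sqrt{D(t)} = O\!\left(t^{-\deg \tilde q - l - 2\deg r}\right),
\]
using $\deg q = \deg \tilde q + \deg r$. Since $l \ge 1$, the right-hand side is in particular $O(t^{-\deg \tilde q - 1})$. Proposition~\ref{Pro:ConvergentCondition} applied to the coprime pair $(\tilde p,\tilde q)$ then yields that $\tilde p/\tilde q$ is a convergent of $\sqrt{D(t)}$, so $(\tilde p,\tilde q) = c(p_n,q_n)$ for some nonzero constant $c$ and some index $n$. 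Setting $r'(t) = c\,r(t)$, we obtain the required factorisation $(p,q) = (r'p_n, r'q_n)$.

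For the lower bound on $\deg a_n$, I would invoke Lemma~\ref{lem:qnqn1}: $p_n - q_n\sqrt{D}$ vanishes at infinity to order exactly $\deg q_n + \deg a_n$. Since $\tilde p - \tilde q\sqrt{D}$ is a nonzero scalar multiple of $p_n - q_n\sqrt{D}$, its vanishing order equals $\deg q_n + \deg a_n = \deg \tilde q + \deg a_n$. Comparing with the exact order $\deg \tilde q + l + 2\deg r$ computed above gives $\deg a_n = l + 2\deg r \ge l$. The coprime special case is then immediate: if $\gcd(p,q) = 1$ then $r$ is constant, so $p/q$ literally equals $p_n/q_n$ and is a convergent.

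The only slightly subtle point is the first step, namely identifying $uD_1$ with the correct branch of $\sqrt{D}$ at $\infty_+$ and tracking how the order of vanishing at infinity behaves when one divides by a polynomial of positive degree; once this bookkeeping is in hand, the lemma is a direct consequence of the two preceding lemmas together with Proposition~\ref{Pro:ConvergentCondition}.
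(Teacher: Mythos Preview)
Your argument is correct. Converting the order at $\infty_+$ into the Laurent estimate $p-q\sqrt{D}=O(t^{-\deg q-l})$, factoring out $r=\gcd(p,q)$, and then invoking Proposition~\ref{Pro:ConvergentCondition} on the coprime pair $(\tilde p,\tilde q)$ is a clean reduction; the exact-order bookkeeping $\deg a_n=l+2\deg r$ via Lemma~\ref{lem:qnqn1} is also right. One small expository point: you first state the estimate as an $O(\cdot)$ bound and later speak of ``the exact order $\deg\tilde q+l+2\deg r$''. This is fine because the hypothesis gives an exact order and dividing by $r$ (which has a pole of order exactly $\deg r$ at infinity) preserves exactness, but it would be cleaner to say so explicitly.

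The paper takes a different route: instead of factoring out the gcd, it writes $q(t)=\sum_i r_i(t)q_i(t)$ in the ``Ostrowski-type'' expansion with $\deg r_i<\deg a_i$, and then argues directly from the vanishing of the negative-power coefficients of $q\sqrt{D}$ that only a single $r_n$ can be nonzero; this forces $q=r_nq_n$, $p=r_np_n$, and yields the degree bound. Your approach is shorter and leans entirely on the already-stated Proposition~\ref{Pro:ConvergentCondition} and Lemma~\ref{lem:qnqn1}, avoiding the digit-expansion argument; the paper's approach is more self-contained with respect to the continued fraction combinatorics and makes the identity $\deg a_n=l+2\deg r$ visible as a consequence of the expansion. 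Either way the conclusion is the same.
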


\begin{proof}
Expressing the series as before, we have that

\begin{equation*}
p(t)-\sqrt{D(t)}q(t)
\end{equation*}

vanishes at $(\infty_+)$ with order deg $q(t)+l$. Any polynomial can be written uniquely in the form
$$\sum_{i=0}^n r_i(t)q_i(t) \text{ with deg }r_i(t)< \text{ deg }a_i(t)$$
for some $n$. By writing $q(t)$ in this form we see that at most one of the $r_i(t)$ is nonzero: we can forget about $p(t)$ and just look at the vanishing of the coefficients of the $t^{-j}$ for $j>0$. In fact, the coefficients of $t^{-j}$ in $q_i(t) \sqrt{D(t)}$ vanish for $1 \le j \le \deg q_{i+1}-1$ and are nonzero for $t^{-\deg q_{i+1}}$. We have then $q(t)=r_n(t)q_n(t)$, $p(t)$ is forced to be $r_n(t)p_n(t)$ and $\deg a_n(t)=\deg r_n(t) + l$.
\end{proof}

\begin{remark}
By combining these lemmas, we see that we can seek the convergents of $\sqrt{D(t)}$ exactly by looking at regular functions $p(t)-uq(t)$ on the affine part of $\mathcal{C}$ such that:\\
i) the order of $p(t)-uq(t)$ at $(\infty_+)$ is more than deg $q(t)$;\\
ii) the image of their divisor in $G$ is zero.\\
Note that the convergents will be of the form $(p(t),\frac{q(t)}{D_1(t)})$.
\end{remark}

\subsection{Translation over $G$}

We now state the theorem which relates the degrees of the partial quotients and the $W_l$.\\

Let us first notice that, for $l \le g-1$, if $\delta^n \in \overline{W_l}$, then $\delta^{n+1} \in \overline{W_{l+1}}$. We prove it by induction on $g$. If $g=1$ then $W_1$ is the whole $G$. If $\delta^n \in W_l$, then $\delta^n=[P_1]+ \dots +[P_l]$ for some points $P_i \in \C \dif S$ and hence $\delta^{n+1}=[P_1] + \dots + [P_l] + [\infty_{-}]$. If $\delta^n \in \overline{W_l} \dif W_l$ we can use Corollary \ref{corollary:powersofdelta}: say, $\delta^n \in \pi_1^{-1}(\overline{W_{l-2}})$ and then $\delta^{n+1} \in \pi_1^{-1}(\overline{W_{l-1}})$ by inductive hypothesis.

\begin{thm}\label{thm:thmpartialquotwl}
Both $\delta^n \in \overline{W_l} \dif \overline{W_{l-1}}$ and $\delta^{n-1} \not \in \overline{W_{l-1}}$ occur simultaneously if and only if there exist convergents $(p(t),q(t))$ of $\sqrt{D(t)}$ such that:\\
i) $\deg$ $p(t)=n$;\\
ii) the order of $p(t)-uD_1(t)q(t)$ at $(\infty_+)$ is $\deg$ $q(t)+d-l$.
\end{thm}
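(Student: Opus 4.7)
\emph{Setup.} My plan is to establish a dictionary between convergents of $\sqrt{D(t)}$ and representations of $n\delta$ as sums of $l$ point classes in $G$, leveraging Lemmas~\ref{lem:pre2}--\ref{lem:pre3} to bridge between degree conditions on partial quotients and the stratification $W_0 \subseteq W_1 \subseteq \dotsb \subseteq \overline{W_g}$.

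\emph{Direction $(\Leftarrow)$.} Starting from a convergent $(p,q)$ with the two properties, Lemma~\ref{lem:pre2}, combined with $\deg p = n = \deg q + d$ and $\deg a_k = d-l$ (deduced from the order condition), yields
\[
\div(p - u D_1 q) = (n-l)(\infty_+) - n(\infty_-) + \sum_{i=1}^{l}(x_i).
\]
Evaluation at $\xi_j^\pm$ gives $p(\rho_j)$ in both cases, because $D_1(\rho_j)=0$, so by Proposition~\ref{prop:smallgeneralizedjacobian} the image of this divisor in $G$ is trivial; rearranging produces $n\delta = \sum [x_i]$, and hence $\delta^n \in \overline{W_l}$. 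When some $x_i$ lies in $S$, the equality $p(\rho_j)=0$ forces the conjugate $\xi_j^\mp$ also to be a zero, matching the closure description of Proposition~\ref{prop:closurewl}. For the two negative conditions I argue by contradiction: if $\delta^n$ (resp.\ $\delta^{n-1}$) lay in $\overline{W_{l-1}}$, the construction of the $(\Rightarrow)$ direction below would produce a polynomial pair $(P,Q)$ whose associated function has order at $\infty_+$ at least $\deg Q + d - l + 1$, and Lemma~\ref{lem:pre3} would yield a convergent with $\deg a_{k'} \geq d-l+1$. In the first case this contradicts $\deg a_k = d-l$; in the second, degree bookkeeping (using strict monotonicity of $\deg p_j$ and $\deg P = n-1$) forces $k'=k-1$ with $\deg r = \deg a_{k-1}-1$, leading to the impossible relation $\deg a_{k-1} \geq \deg a_{k-1} + (d-l)$ since $l \leq g = d-1$.

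\emph{Direction $(\Rightarrow)$.} Given the two conditions, I would write $n\delta = \sum_{i=1}^l [P_i]$ in $G$ (with closure points treated via Proposition~\ref{prop:closurewl}) and lift to the existence of $f \in \mathbb{C}(\Cv)$ with $\div f = (n-l)(\infty_+) - n(\infty_-) + \sum (P_i)$ and $f(\xi_j^+) = f(\xi_j^-)$ for all $j$. Since $f$ is regular on the affine part, writing $f = A(t) + u B(t)$ with polynomials $A, B$ and imposing the invariance forces $B(\rho_j)=0$ for all $j$, so $D_1 \mid B$ and $f = p(t) - u D_1(t) q(t)$. Matching pole and zero orders at $\infty_\pm$ fixes $\deg p = n$ and $\deg q = n-d$, and the zero of order $n-l = \deg q + (d-l)$ at $\infty_+$ places us in the hypothesis of Lemma~\ref{lem:pre3}: $(p,q) = (r p_k, r q_k)$ for some convergent with $\deg a_k \geq d-l$. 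To conclude $\deg r = 0$, suppose a linear factor $(t-\alpha)$ divides $r$; then $(t-\alpha)$ divides both $p$ and $q$, and $f/(t-\alpha)$ has divisor with a zero of order $n-l+1$ at $\infty_+$, a pole of order $n-1$ at $\infty_-$, and $l-2$ affine zeros. Translating back via the forward analysis gives $\delta^{n-1} \in \overline{W_{l-2}} \subseteq \overline{W_{l-1}}$, contradicting the assumption. Hence $(p,q)$ is itself a convergent with the required properties.

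\emph{Main obstacle.} The principal difficulty is the consistent treatment of the closure $\overline{W_l} \setminus W_l$: zeros of $p - u D_1 q$ landing in $S$ on the forward side, and extracting bona fide polynomial data from a limit point of $W_l$ on the reverse side. Proposition~\ref{prop:closurewl} is the key reconciling tool here, since its boundary strata $\delta \cdot \pi_j^{-1}(\overline{W^{(j)}_{l-2}})$ correspond exactly to the degeneration in which $(t-\rho_j)$ divides both $p$ and $q$, which is precisely the case that the minimality hypothesis $\delta^{n-1} \notin \overline{W_{l-1}}$ is designed to exclude.
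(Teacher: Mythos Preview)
Your skeleton matches the paper's three-step argument (convergent $\Rightarrow$ $\delta^n\in\overline{W_l}$; $\delta^n\in\overline{W_l}$ $\Rightarrow$ convergent or one of the two exclusions; then the exclusions), but there is a genuine gap in how you treat the boundary $\overline{W_l}\setminus W_l$.

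The paper's proof is by \emph{induction on $g$}. In the forward direction, when some $x_i$ lies in $S$, say $x_1=\xi_1^+$, one gets $p(\rho_1)=0$; the paper then passes to the pair $(p/(t-\rho_1),q)$, which is a convergent of $\sqrt{D^{(1)}(t)}=\sqrt{D(t)}/(t-\rho_1)$, applies the inductive hypothesis for $g-1$ to obtain $(\delta^{(1)})^{n-1}\in\overline{W^{(1)}_{l-2}}$, and then invokes Proposition~\ref{prop:closurewl} to conclude $\delta^n\in\overline{W_l}$. Symmetrically, in the reverse direction when $\delta^n\in\overline{W_l}\setminus W_l$, Corollary~\ref{corollary:powersofdelta} gives $(\delta^{(j)})^{n-1}\in\overline{W^{(j)}_{l-2}}$; the inductive hypothesis for $D^{(j)}$ then produces a convergent $(p_1,q_1)$ of $\sqrt{D^{(j)}}$, and one returns to $\sqrt{D}$ via $(p_1(t)(t-\rho_j),q_1(t))$.

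Your proposal never sets up this induction. In $(\Rightarrow)$ you write $n\delta=\sum[P_i]$ ``with closure points treated via Proposition~\ref{prop:closurewl}'', but when $\delta^n\notin W_l$ there is \emph{no} such sum with $P_i\in\Cv\setminus S$, so you cannot lift to a regular function $f$ on the affine part as you describe; something has to produce the polynomial data, and that something is the inductive call to $D^{(j)}$. Your ``Main obstacle'' paragraph then misreads the boundary: the stratum $\delta\cdot\pi_j^{-1}(\overline{W^{(j)}_{l-2}})$ does \emph{not} correspond to $(t-\rho_j)$ dividing both $p$ and $q$ (convergents are coprime), but to $(t-\rho_j)\mid p$, which is exactly what permits descent to a convergent of $\sqrt{D^{(j)}}$. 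The hypothesis $\delta^{n-1}\notin\overline{W_{l-1}}$ is used, but only to rule out the sub-case $q_1(\rho_j)=0$ after one has already produced $(p_1,q_1)$ inductively. Without the induction on $g$, both directions are incomplete on the closure.
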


\begin{proof}
We prove this by induction on $g$. For $g=1$ the properties above imply that $D(t)$ is pellian and the situation is precisely that of Remark $\ref{rmk:pelliandegree}$.\\

Step I: we first prove that if we have convergents satisfying the properties above, then $\delta^n \in \overline{W_l}$.\\

Suppose $(p(t),q(t))$ are convergents with such properties. Then:
\begin{equation*}
\text{ div}(p(t)-uD_1(t)q(t))=(\text{deg }q(t) + d-l)(\infty_+)-k(\infty_-)+\sum_{i=1}^{k-\text{deg }q(t)-d+l} (x_i)
\end{equation*}

But $k=n$ by summing with its conjugate and $\deg p(t)=\deg q(t)+d$ so the latter equality becomes:
\begin{equation*}
\text{ div}(p(t)-uD_1(t)q(t))=(n-l)(\infty_+)-n(\infty_-)+\sum_{i=1}^{l} (x_i)
\end{equation*}

Suppose that none of the $x_i$ belongs to $S$. Then, transposing this equality in $G$, the left hand side is zero by Lemma \ref{lem:pre1}, so we have:
\begin{equation*}
\delta^n = [x_1]+\dotsb+[x_l] \in W_l
\end{equation*}

If some $x_i \in S$, say $x_1=\xi_1^+$ (both $+$ and $-$ could occur), then $p(\rho_1)=0$, since $D_1(\rho_1)=0$. This implies that $\left(\frac{p(t)}{(t-\rho_1)},q(t)\right)$ are convergents for $\frac{\sqrt{D(t)}}{{t-\rho_1}}$, since they are relative prime and provide vanishing at $\infty_+$ of order $\deg$ $q(t)+d-l+1$. But then $\delta^{(1)n-1} \in W^{(1)}_{l-2}$ and by Proposition \ref{prop:closurewl}, then $\delta^n \in \overline{W_l}$.\\

Step II: we prove that if $\delta^n \in \overline{W_l}$, then we can either find suitable convergents or we have that $\delta^n \in \overline{W_{l-1}}$ or $\delta^{n-1} \in \overline{W_{l-1}}$.\\

Let $\delta^n \in \overline{W_l}$ and suppose that $\delta^n \in W_l$, so that $\delta^n=[x_1]+\dotsb+[x_l]$ with the $x_i \in \Cv \dif S$. We take the rational function with divisor
\begin{equation*}
(n-l)(\infty_+)-n(\infty_-)+\sum_{i=1}^{l} (x_i)
\end{equation*}

(this can be done because we are in the genus zero case). If $\delta^{n-1} \not \in \in \overline{W_{l-1}}$ and $\delta^n \not \in \overline{W_{l-1}}$, then none of the $x_i$ is neither $\infty_+$ nor $\infty_-$. Such function is regular on the affine part of $\mathcal{C}$ and its divisor maps to zero in $G$, so, using the Lemma \ref{lem:pre1}, it is of the form $p(t)-uD_1(t)q(t)$. As before, by considering the conjugate, we observe that $\deg p(t)=n$, $\deg q(t)=n-d$ and $p(t)-uD_1(t)q(t)$ vanishes at $\infty_+$ with order $\deg q(t)+d-l$. By Lemma \ref{lem:pre3}, $(p(t),q(t))$ are of the form $(P(t)r(t),Q(t)r(t))$ for convergents $(P(t),Q(t))$ and some polynomial $r(t)$ of degree $h$. But then the existence of the convergents $(P(t),Q(t))$ implies that $\delta^{n-h} \in \overline{W_{l-h}}$. If $h \ge 1$, then $\delta^{n-1} \in \overline{W_{l-1}}$. If $h=0$, then $(p(t),q(t))$ are the required convergents.\\

We now suppose that $\delta^n \in \overline{W_l} \dif W_l$, hence (say) $\delta^{(1)n-1} \in \overline{W^{(1)}_{l-2}}$ by Corollary \ref{corollary:powersofdelta}. Then either $\delta^{(1)n-2} \in \overline{W^{(1)}_{l-3}}$, or $\delta^{(1)n-1} \in \overline{W^{(1)}_{l-3}}$ or none of these. They imply respectively $\delta^{n-1} \in \overline{W_{l-1}}$, $\delta^n \in \overline{W_{l-1}}$ and if none of the above occurs, by induction, then there are convergents $(p_1(t),q_1(t))$ for $\frac{\sqrt{D(t)}}{t-\rho_1}$ with $\deg p_1(t)=n-1$ and which vanishes with order $\deg q_1(t)+d^{(1)}-(l-2)=\deg q_1(t)+d-l+1$. Then $(p_1(t)(t-\rho_1),q_1(t))$ are convergents for $\sqrt{D(t)}$: the vanishing holds with order $\deg q_1(t)+d-l$, the degree of $p_1(t)$ is $n$. It might be that they are not coprime: then $q_1(\rho_1)=0$. Then the correct convergents would be $\left(p_1(t),\frac{q_1(t)}{t-\rho_1}\right)$, but then, by the first part of the proof, we have that $\delta^{n-1} \in \overline{W_{l-2}}$ and in particular $\delta^{n-1} \in \overline{W_{l-1}}$.\\

Step III: we combine Step I and Step II to prove that, in the situation of Step I, actually $\delta^n \not \in \overline{W_{l-1}}$ and $\delta^{n-1} \not \in \overline{W_{l-1}}$.\\

Suppose that $\delta^n \in \overline{W_{l-1}}$. Then we can find convergents $(P(t),Q(t))$ for $\sqrt{D(t)}$ such that $\deg P(t)=n, \deg Q(t)=n-d$ and $P(t)-Q(t) \sqrt{D(t)}$ vanishes with order $n-l+1$. But then $(p(t),q(t))$ would not be convergents: they should be equal to $(P(t),Q(t))$ because they have the same degree, but they give a different order of vanishing.\\

Suppose that $\delta^{n-1} \in \overline{W_{l-1}}$. Then we can find convergents $(P(t),Q(t))$ for $\sqrt{D(t)}$ such that $\deg P(t)=n-1, \deg Q(t)=n-1-d$ and $P(t)-Q(t) \sqrt{D(t)}$ vanishes with order $n-1-l+1=n-l$. In particular, the partial quotient corresponding to $(P(t),Q(t))$ has degree $d+1-l \ge 2$, hence there can not be a partial quotient $(p(t),q(t))$ where $\deg p(t)=n$.

\end{proof}

\begin{remark}
It is crucial to us that $\mathcal{C}$ has genus zero. If the genus is $t>0$, we would expect an "error term" of magnitude $t$ for the order, since any divisor differs from a principal divisor by a suitable term of the form $[y_1]+\dotsb+[y_t]$.
\end{remark}

We can now recover many informations about partial quotients. For example, infinitely many powers of $\delta$ belong to $\overline{W_l}$ if and only if for infinitely many $n$ we have deg $a_n(t) \ge d-l$. In particular, if we know all the couples $(n,l)$ for which $\delta^n \in \overline{W_l}$, then we know all such degrees.\\

We show this with an example. Let us write the sequence $l_n$ where $l_n$ is the minimum natural number such that $\delta^n \in \overline{W_{l_n}}$. Such a sequence will be of the form:
\begin{equation*}
0,1, \dots ,g,g-2,g-1,g,g,g,g-1,g,g-3,g-2,g-1,g,g, \dots
\end{equation*}

And we will have partial quotients of degree $\deg a_0(t)=d$ (this always occurs) and then $\deg a_1(t)=3,\deg a_2(t)=1,\deg a_3(t)=1,\deg a_4(t)=2,\deg a_5(t)=4,a_6(t)=1, \dots$. Such a sequence increases by $1$ until it reaches $g$, then it drops by a natural numbers. Such drops correspond to the degrees of the partial quotients.

\subsection{Results on the degrees of the partial quotients}

We have already seen that $D(t)$ is pellian if and only if all the $b_i$ are roots of unity. We can now relate the number of $b_i$ which are roots of unity to the maximum number $h$ such that infinitely many partial quotients of $\sqrt{D(t)}$ have degree $h$. This is the same as asking whether infinitely many powers of $\delta$ lie in $\overline{W_{d-h}}$. We will use a Skolem-Mahler-Lech type theorem, that we give here in a formulation of Zannier:

\begin{thm}\cite{zannier}*{Theorem 3.2}\label{thm:smlz}
Let $\delta \in \Gm^g$. Given an increasing sequence of positive integers $c_1,c_2,\dotsc$ the Zariski-closure of $i(\delta^{c_i})$ is a finite union of points and cosets of algebraic subgroups of $\mathbb{G}_m^g$ of positive dimension.
\end{thm}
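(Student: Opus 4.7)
The plan is to derive this from the Mordell--Lang theorem for tori (the case due to Laurent), applied to the finitely generated cyclic subgroup $\Gamma = \langle i(\delta)\rangle$ of $\Gm^g$.

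First, I would let $Z$ denote the Zariski closure of $\{i(\delta^{c_i})\}_{i\ge 1}$ and decompose it into irreducible components $Z = V_1 \cup \dotsb \cup V_k$. Each component $V_s$ is itself the Zariski closure of the subset of the $i(\delta^{c_i})$ that it contains, since otherwise replacing $V_s$ by its (proper) closed subset $\overline{V_s\cap\{i(\delta^{c_i})\}}$ would still produce a closed set containing every $i(\delta^{c_i})$, contradicting the fact that the $V_s$ are the irreducible components of $Z$. Any $V_s$ meeting the orbit $\{i(\delta^{c_i})\}$ in only finitely many points must then be a single point, while any positive-dimensional $V_s$ necessarily contains an infinite, hence Zariski-dense, subset of $\Gamma$.

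The core step is then to establish: every positive-dimensional irreducible subvariety $V\subseteq \Gm^g$ whose intersection with $\Gamma$ is Zariski-dense in $V$ is a coset of an algebraic subgroup. Informally, one introduces the stabilizer $H = \{x\in\Gm^g : xV = V\}$ and observes that the set $\{x : \dim(xV \cap V) = \dim V\}$ is contained in $H$, because $V$ is irreducible and $xV$ has the same dimension, so any full-dimensional intersection forces equality. A fibre-dimension/pigeonhole argument applied to the multiplication map $V\times V \to \Gm^g$, $(a,b)\mapsto ab^{-1}$, shows that infinitely many differences $i(\delta^{c_i - c_j})$ land in this set, whence $\dim H > 0$; then $V$ is a union of cosets of the identity component $H^0$, and by irreducibility is a single such coset.

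The hard part is exactly the nondegeneracy step $\dim H > 0$: this is the content of Laurent's theorem, whose proof typically proceeds via a height/counting argument (valid over $\Qbar$ and then transferred to $\C$ by a standard spreading-out/specialisation argument, since any finitely generated subgroup of $\Gm^g(\C)$ is conjugate, after choice of coordinates, to one over $\Qbar$) or via a direct toric argument using characters. Since the statement is quoted from Zannier's paper, I would ultimately cite his formulation; my contribution is the reduction to the Mordell--Lang input together with the verification that the isolated points (the zero-dimensional components) and the positive-dimensional cosets together account for every irreducible piece of $Z$, which is the shape of decomposition asserted in the theorem.
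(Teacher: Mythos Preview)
The paper does not give its own proof of this statement: it is quoted verbatim as \cite{zannier}*{Theorem 3.2} and used as a black box (a Skolem--Mahler--Lech type input) in the subsequent arguments. So there is no proof in the paper to compare against.

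Your proposed reduction to Laurent's Mordell--Lang theorem for $\Gm^g$ is the correct and standard route, and the decomposition into irreducible components with the observation that each must be the Zariski closure of the orbit points it contains is sound. One small comment on your stabilizer sketch: from $\dim H^0>0$ you only get that $V$ is fibred by cosets of $H^0$, not yet that $V$ is a single coset; one has to pass to the quotient $\Gm^g/H^0$ and iterate (or simply invoke Laurent's theorem in its usual form, which already gives that an irreducible $V$ with Zariski-dense intersection with a finitely generated group is a translate of a subtorus). Since you end by citing the result anyway, this is not a real gap, just a point where the informal sketch would need tightening if you wanted a self-contained proof.
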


\begin{example}
Skolem-Mahler-Lech theorem is expressed in its simplest form in $\mathbb{G}_m^g$. For instance, if $g=3$ and $\lambda=(2,3,5)$, asking whether infinitely many powers of $\lambda$ belong to the variety defined by $xy-z-1=0$ is equivalent to ask whether the linear recursion $c_n=6^n-5^n-1$ has infinitely many zeroes.
\end{example}

Therefore, it is a necessary condition for $\overline{W_l}$ to contain infinitely many powers of $\delta$ that $\overline {i(W_l)}$ contains a coset of an algebraic subgroup of $\mathbb{G}_m^g$ of positive dimension. We suppose $D(t)$ not to be Pellian and we first focus on infinitely many powers of $\delta$ lying in $W_l$ (we will deal with $\overline{W_l}$ using Proposition \ref{prop:closurewl}).\\

Hence, we take the closure of the infinitely many powers of $\delta$ lying in $W_l$ and we obtain a coset $T'$ of an algebraic group. The coordinates for which $T'$ has only finitely many values correspond exactly to the $b_i$ which are roots of unity. We can take an irreducible algebraic subgroup $T \sub T'$ of dimension one, moreover keeping the nonconstant coordinates to be nonconstant (just intersect $T$ with $x_1= \dotsc =x_j$; the algebraic subgroups of $\Gm^g$ are given by systems of equations of the form $x_1^{a_1} \dotsm x_g^{a_g}=1$). This attention to nonconstant coordinates will be of importance later, when we will use the projection $\pi_j: G \rightarrow G^{(j)}$.\\

Thus, we have a translate $T$ of a one-dimensional algebraic group contained in $\overline{W_l}$ and we assume for now that the intersection with $W_l$ is infinite. Recalling that we have a map $\phi: (\mathcal{C} \setminus S)^l \rightarrow G \cong \mathbb{G}_m^g$, then the preimage of $T$ in $(C \dif S)^l$ is infinite, so we can take an irreducible curve $E$ contained in $\phi^{-1}(T)$ whose image via $\phi$ is not constant. We can then lift the coordinates $x_i$ on $T \sub G \cong \Gm^g$ on $E$ and we can write down explicitely the $\phi^*x_i=x_i(\phi)$. If the $z_i$ are the coordinates of $E$ with respect to $(\mathcal{C} \setminus S)^g$, then $\phi^*x_i$ is the $i$-th component of $\phi$, so that
\begin{equation*}
\phi^*x_i=\prod_{j=1}^l \frac{z_j-b_i}{z_j-b_i^{-1}}
\end{equation*}

By choosing a nonsingular complete model $X$ for $E$, we have that (since $E$ is a curve) these functions lift to rational functions on $X$ (none of them is equal to the constants $b_i^{\pm 1}$, since $E \sub (\Cv \dif S)^g$ corresponds to an open set of $X$). These functions generate a multiplicative group of rank $\le 1$ on $E$, since the $x_i$ do and so do the $\phi^*x_i$. The rank is not zero since $\phi(E)$ is not a point by our choice of $E$. The corresponding functions on $X$ have the same property.\\

We can now prove that this doesn't happen often:\\

\begin{lem}\label{lem:curveunity}
Let $X$ be a complete nonsingular curve with functions $z_1,\dotsc,z_l$. Suppose that
\begin{itemize}
    \item none of the $z_j$ is equal to one of the constants $b_i^{\pm 1}$
    \item the functions 
    $$f_i=\left( \prod_{j=1}^l \frac{z_j-b_i}{z_j-b_i^{-1}} \right)\qquad i=1,\dotsc,g$$ form a multiplicative group of rank $1$ (modulo constants, i.e. modulo $\C^*$).
\end{itemize}
then
\begin{enumerate}
    \item at most $l$ of the $f_i$ are nonconstant;
    \item at most $l-1$ of the $f_i$ are constant.
\end{enumerate}
\end{lem}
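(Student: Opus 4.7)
My plan is to handle (1) by a local-support count and (2) by a coincidence bound for two morphisms $\P^1 \to \P^N$ coming from a $\P^1$-parameterized family of divisors on $X$.

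For (1), I use the rank-$1$ hypothesis to fix $f \in \C(X)^*$ with $f_i = c_i f^{m_i}$ ($c_i \in \C^*$, $m_i \in \Z$); then $\div(f_i) = m_i \div(f)$, so $\mathrm{supp}(\div f_i) = \mathrm{supp}(\div f)$ whenever $m_i \ne 0$. The support of $\div(f_i)$ is contained in $X_i := \bigcup_{j=1}^l z_j^{-1}(\{b_i,b_i^{-1}\})$, and any point $P$ of the nonempty $\mathrm{supp}(\div f)$ must lie in $X_i$ for every $i$ with $m_i \ne 0$. But $(z_1(P),\dotsc,z_l(P))$ has only $l$ entries and the pairs $\{b_i,b_i^{-1}\}$ are pairwise disjoint by hypothesis, so at most $l$ of the $X_i$ can contain $P$, giving $\#\{i : m_i \ne 0\} \le l$.

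For (2), set $M_w := \sum_j z_j^*[w]$ for $w \in \P^1$ and $s_w := \prod_j (z_j - w)$, so $\div(s_w) = M_w - M_\infty$ and $s_w$ is a section of $\Oo(M_\infty)$. The rule $w \mapsto [s_w]$ defines a morphism
\[
m \colon \P^1 \to \P\bigl(H^0(X, \Oo(M_\infty))\bigr).
\]
Expanding $s_w = \sum_{k=0}^l (-w)^{l-k} e_k(z_1,\dotsc,z_l)$ shows that $m$ is given in projective coordinates by polynomials of degree $\le l$, so $\deg m \le l$. The crucial observation is that $f_i$ is constant if and only if $M_{b_i} = M_{b_i^{-1}}$, equivalently $m(b_i) = m(b_i^{-1})$.

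Let $\iota(w) := 1/w$ and $\tilde m := m \circ \iota$, also of degree $\le l$. The coincidence locus $C := \{w \in \P^1 : m(w) = \tilde m(w)\}$ automatically contains the two fixed points $\pm 1$ of $\iota$, and it contains both $b_i$ and $b_i^{-1}$ for every $i \in I_c := \{i : f_i \text{ constant}\}$. By the distinctness of the $b_i^{\pm 1}$ and the fact that $b_i \ne \pm 1$, these are $2 + 2|I_c|$ distinct points of $C$. A standard fact states that two degree-$\le l$ rational maps $\P^1 \to \P^N$ either coincide as maps or agree on at most $2l$ points, since the $2 \times 2$ minors of their coordinate matrices are polynomials of degree $\le 2l$. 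If $m \equiv \tilde m$, then $M_w = M_{1/w}$ for every $w$, forcing every $f_i$ to be constant and violating the rank-$1$ hypothesis. Hence $|C| \le 2l$, so $2 + 2|I_c| \le 2l$ and $|I_c| \le l - 1$.

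The main obstacle is setting up the morphism $m$ and carefully verifying the degree bound; once that is done, the counting is elementary, and the rank-$1$ hypothesis is used exactly once, to exclude the degenerate possibility $m \equiv \tilde m$.
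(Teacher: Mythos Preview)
Your argument for part (1) is essentially the paper's pigeonhole: a point in the common support forces each nonconstant $f_i$ to have some $z_j(P)\in\{b_i,b_i^{-1}\}$, and the $l$ values $z_j(P)$ can hit at most $l$ of the disjoint pairs $\{b_i,b_i^{-1}\}$.

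For part (2) your approach is correct but genuinely different from the paper's. The paper proves (2) by a double-counting of $\sum_{j<k}\deg(z_jz_k-1)$: the upper bound $(l-1)\sum_j\deg z_j$ comes from $\deg(z_jz_k-1)\le\deg z_j+\deg z_k$, while a lower bound $l\sum_j\deg z_j$ is squeezed out of the assumed constancy of $l$ of the $f_i$ via a local analysis at the preimages of the $b_i^{\pm1}$; the resulting inequality forces all $z_j$ constant, contradicting rank $1$. The paper itself remarks that the choice of the auxiliary functions $z_jz_k-1$ is not transparent. Your route is more conceptual: you encode the tuple $(z_j)$ as a degree-$\le l$ morphism $m\colon\P^1\to\P\bigl(H^0(X,\Oo(M_\infty))\bigr)$, $w\mapsto[s_w]$, so that ``$f_i$ constant'' becomes the coincidence $m(b_i)=m(b_i^{-1})$, and then bound the coincidence locus of $m$ and $m\circ\iota$ by $2l$. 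This makes the involution $w\mapsto w^{-1}$ visible, and the two automatic coincidences at its fixed points $\pm1$ account exactly for the ``$-1$'' in the bound. The paper's argument is more elementary (no linear systems), yours is more structural; both ultimately invoke the rank-$1$ hypothesis only to exclude the degenerate case where all $f_i$ are constant.

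One small point to tighten: your definition $M_w=\sum_j z_j^*[w]$ and the section $s_w$ tacitly assume each $z_j$ is a nonconstant morphism $X\to\P^1$. A constant $z_j$ (allowed by the hypotheses, as long as it is not one of the $b_i^{\pm1}$) contributes only a nonzero scalar factor $(z_j-w)$ to $s_w$, hence to every $f_i$; you should say at the outset that such $z_j$ can be discarded, which only decreases $l$ and strengthens the conclusion.
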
 
\begin{proof}
Notice that the zeroes of the function $\frac{z_j-b_i}{z_j-b_i^{-1}}$ are the zeroes of $z_j-b_i$ and its poles are the zeroes of $z_j-b_i^{-1}$, because $z_j-b_i$ and $z_j-b_i^{-1}$ have the same poles.

Part 1) If, say, $f_1,\dotsc,f_{l+1}$ are nonconstant, then by the condition on the rank there is a point $P$ which is a zero or a pole of each of them.
By the pigeonhole principle, this implies that, for some $1\leq j\leq l$ and two different $1\leq i,k\leq l+1$, the point $P$ is both a zero of $z_j-b_i$ (or $z_j-b_i^{-1}$) and of $z_j-b_k$ (or $z_j-b_k^{-1}$), but this contradicts our assumptions on the $b_i$.

\medskip

Part 2) We proceed by a double-counting argument. We want upper and lower estimates of:
$$\sum_{j<k} \text{deg} (z_jz_k-1)$$
First of all, if $z_jz_k-1=0$ then $z_k=z_j^{-1}$ and:
$$\frac{z_j-b_i}{z_j-b_i^{-1}} \cdot \frac{z_k-b_i}{z_k-b_i^{-1}}=\frac{z_j-b_i}{z_j-b_i^{-1}} \cdot \frac{z_j^{-1}-b_i}{z_j^{-1}-b_i^{-1}}=b_i^2$$
So we can simply remove $z_j$ and $z_k$ from our set of functions and conclude by an inductive argument. If $z_jz_k-1$ is always nonzero we can then estimate:
$$\sum_{j<k} \deg (z_jz_k-1) \le \sum_{j<k}\left( \deg z_j + \deg z_k\right)=(l-1) \sum_{j=1}^l \deg z_j$$
since $\text{deg} (z_jz_k-1) = \text{deg} (z_jz_k) \le \text{deg } z_j + \text{deg } z_k$.\\

Let now $P$ be a zero of both $z_j-b_i$ and $z_k-b_i^{-1}$. Then by
$$z_jz_k-1=(z_j-b_i)(z_k-b_i^{-1})+b_i(z_k-b_i^{-1})+b_i^{-1}(z_j-b_i)$$
we have ord$_P(z_jz_k-1) \ge $ min$\{$ord$_P(z_j-b_i)$, ord$_P(z_k-b_i^{-1}) \}$.\\
Suppose that, say, $f_1,\dotsc,f_l$ are constant. We choose some $i \le l$ and some point $P \in X$ such that $P$ is a zero of some of the $z_j-b_i^{\pm 1}$. Let $S_i$ be the subset of $\{ 1,\dotsc,l \}$ such that $j \in S_i \leftrightarrow z_j(P) = b_i^{\pm 1}$. Notice that  the $S_i$ are disjoint. We now choose $h \in S_i$ to be such that ord$_P (z_h-b_i^{\pm 1})$ is maximal. Assume for simplicity that the exponent is $-1$. Then:
$$\sum_{j \in S_i \setminus \{ h \}} \text{ord}_P(z_jz_h-1) \ge \sum_{j \in S_i \setminus \{ h \}} \text{min} \{ \text{ord}_P(z_h-b_i^{-1}), \text{ ord}_P(z_j-b_i) \} = \sum_{j \in S_i} \text{ ord}_P(z_j-b_i) = \sum_{j \in S_i} \text{ ord}_P(z_j-b_i^{-1}) $$
where the last equality follows from the fact that $f_i$ is constant. We have then:
$$2 \sum_{j < k, j,k \in S_i} \text{ord}_P(z_jz_k-1) \ge 2 \sum_{j \in S_i \setminus \{ h \}} \text{ord}_P(z_jz_h-1) \ge \sum_{j \in S_i} \text{ ord}_P(z_j-b_i) + \sum_{j \in S_i} \text{ ord}_P(z_j-b_i^{-1})$$
Keeping $P$ fixed, we sum over $i$:
$$2\sum_{j<k} \text{max} \{ \text{ord}_P(z_jz_k-1), 0 \} \ge 2 \sum_{i=1}^l \sum_{\substack{j < k \\ j,k \in S_i}} \text{ord}_P(z_jz_k-1) \ge \sum_{i=1}^l \left( \sum_{j \in S_i} \text{ ord}_P(z_j-b_i) + \sum_{j \in S_i} \text{ ord}_P(z_j-b_i^{-1}) \right)$$
We sum now over $P \in X$, noting that the left hand side becomes the sum of degrees we were looking for:
$$2\sum_{j<k} \text{deg} (z_jz_k-1) \ge \sum_{i=1}^l \sum_{P \in X} \left( \sum_{j \in S_i} \text{ ord}_P(z_j-b_i) + \sum_{j \in S_i} \text{ ord}_P(z_j-b_i^{-1}) \right)=\sum_{i=1}^l \sum_{j=1}^l \left( \deg (z_j-b_i) + \deg (z_j-b_i^{-1})\right)$$
where the last equality follows from the definition of $S_i$ (which depends on $P$). We have then
$$\sum_{j<k} \text{deg} (z_jz_k-1) \ge l \sum_{j=1}^l \text{deg } z_j$$
which, combined with our previous estimate, provides a contradiction.
\end{proof} 

\begin{remark}
It is not very clear to us why one should consider the functions $z_jz_k-1$ and what they do represent, but they encode the cancellation of numerators and denominators keeping a reasonable simmetry and simplicity.
\end{remark}

\begin{thm}\label{thm:rootsofunity}
We assume $b_1^{\pm 1}, \dotsc , b_g^{\pm 1}$ to be distinct. Let $r$ be the number of roots of unity among the $b_i$. Then:

\begin{enumerate}
\item Only finitely many partial quotients of $\sqrt{D(t)}$ have degree $\ge r+2$;
\item Let the roots of unity be precisely $b_1, \dotsc , b_r$ and let $D_p(t)=(t^2-1)(t-\rho_1)^2 \dotsm (t-\rho_r)^2$. Then $\sqrt{D(t)}$ has only finitely many partial quotients of degree $\ge d-r$, with the exceptions of the partial quotients $a_n$ corresponding to those indices $n$ such that the convergents $(p_n(t), q_n(t))$ are of the form $(p(t)(t-\rho_{r+1}) \dotsm (t-\rho_g),q(t))$, where $(p(t),q(t))$ are convergents of $\sqrt{D_p(t)}$. In other words, up to finitely many exceptions, every partial quotient with degree $\ge d-r$ is obtained from  $\sqrt{D_p(t)}$.
\end{enumerate}

\end{thm}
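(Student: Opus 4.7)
The plan is to translate both parts of the theorem into statements about powers of $\delta$ landing in the subvarieties $\overline{W_l}$ via Theorem~\ref{thm:thmpartialquotwl}, and then combine the Skolem--Mahler--Lech statement of Theorem~\ref{thm:smlz}, the rigidity Lemma~\ref{lem:curveunity}, and the boundary decomposition of Proposition~\ref{prop:closurewl}. In this language, part~(1) is the assertion that only finitely many $n$ satisfy $\delta^n \in \overline{W_{g-r-1}}$, while part~(2) is a classification of the infinite family of $n$ with $\delta^n \in \overline{W_r}$. Assuming infinitely many $n$ have $\delta^n \in \overline{W_l}$, I apply Theorem~\ref{thm:smlz} to extract a positive-dimensional coset $T'\subset \overline{W_l}$ on which those powers accumulate; the $i$-th coordinate of $T'$ is constant exactly when $b_i^{2m}$ is forced to take finitely many values along the subsequence, i.e.\ exactly when $b_i$ is a root of unity. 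After passing to a further subsequence I may assume $T'$ has exactly $r$ constant coordinates, and cutting with equations of the form $x_i x_j^{-1}=\text{const.}$ on the nonconstant directions produces a $1$-dimensional sub-coset $T\subset T'$ retaining the same $r$ constant coordinates.

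I now split according to whether $T \cap W_l$ is infinite or not. In the first case, $\phi_l^{-1}(T\cap W_l) \subset (\mathcal{C}\setminus S)^l$ is infinite since $\phi_l$ is generically finite-to-one, and it therefore contains an irreducible curve $E$ whose image in $T$ is dense; on a smooth complete model $X$ of $E$ the coordinate functions $z_1,\dotsc,z_l$ and the associated $f_i = \prod_{j=1}^l \frac{z_j-b_i}{z_j-b_i^{-1}}$ satisfy the hypotheses of Lemma~\ref{lem:curveunity}, and the $f_i$ are constant exactly on the $r$ root-of-unity coordinates. For part~(1) I take $l=g-r-1$: Lemma~\ref{lem:curveunity}(1) gives $g-r \le l = g-r-1$, a contradiction. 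For part~(2) I take $l = r$: Lemma~\ref{lem:curveunity}(2) gives $r \le l-1 = r-1$, again a contradiction. Thus in both parts the $W_l$-part of the infinite family must be finite.

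The remaining case $T \subset \overline{W_l}\setminus W_l$ is handled via Proposition~\ref{prop:closurewl}, which forces $(\delta^{(j)})^{n-1} \in \overline{W^{(j)}_{l-2}}$ for some $j$ and infinitely many $n$. For part~(1) I induct on $g$: with $l = g-r-1$ we have $l-2 = g-r-3$, and the inductive hypothesis applied to $D^{(j)}$, whose parameters are $g-1$ and $r^{(j)} \in \{r-1,r\}$, says that only finitely many powers of $\delta^{(j)}$ can lie in $\overline{W^{(j)}_{g-r^{(j)}-2}}$; since $g-r-3 \le g-r^{(j)}-2$ for both possibilities of $r^{(j)}$, the inductive bound yields the required contradiction, completing part~(1). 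For part~(2) the same iteration shows that, apart from finitely many $n$, every $\delta^n\in\overline{W_r}$ is produced by a chain of reductions $D \leadsto D^{(j_1)} \leadsto D^{(j_1,j_2)} \leadsto \dotsb$; combining part~(1) at each step with the $W_l$-versus-$\overline{W_l}$ dichotomy forces every removed index $j_k$ to correspond to a $b_{j_k}$ which is \emph{not} a root of unity, since removing a root-of-unity factor would violate part~(1) for the smaller polynomial at the next step. The reduction therefore terminates at $D_p$, and the convergent of $\sqrt{D_p}$ isolated in this way lifts to a convergent of $\sqrt{D}$ of the prescribed form $(p(t)(t-\rho_{r+1})\dotsm(t-\rho_g),q(t))$ by unwinding the proof of Theorem~\ref{thm:thmpartialquotwl}.

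I expect the principal obstacle to lie in part~(2): one must verify both that the $W_l$-versus-$\overline{W_l}$ reduction terminates precisely at $D_p$, which requires a careful comparison of how $r^{(j)}$ shifts under the removal of a factor across the several ranges $r$ may occupy relative to $g/2$ and $2g/3$, and that the geometric correspondence obtained by iterating Proposition~\ref{prop:closurewl} is exactly the arithmetic factorisation $(p(t)(t-\rho_{r+1})\dotsm(t-\rho_g),q(t))$. The latter point involves tracing how the affine zeros $x_i$ appearing in the divisor in Lemma~\ref{lem:pre2} are forced to coincide with the missing points $\xi_j^\pm$ for $j>r$ at each reduction step, and checking that no spurious coincidences arise, so that the final convergent factorisation is exact and not merely up to multiplication by extraneous polynomials.
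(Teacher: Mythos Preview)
Your overall strategy matches the paper's: translate via Theorem~\ref{thm:thmpartialquotwl}, extract a coset $T$ from Theorem~\ref{thm:smlz}, split according to whether $T$ meets $W_l$ in an infinite set, apply Lemma~\ref{lem:curveunity} in the first case, and use Proposition~\ref{prop:closurewl} to reduce in the second. However, there is a genuine logical gap in how you set up the induction.

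For part~(1): you cut $T'$ down to a one-dimensional $T$ and then split on $T\cap W_l$. In the boundary case you conclude that $(\delta^{(j)})^{n-1}\in\overline{W^{(j)}_{l-2}}$ for infinitely many $n$, and then invoke the inductive hypothesis stated as ``only finitely many powers of $\delta^{(j)}$ lie in $\overline{W^{(j)}_{g-r^{(j)}-2}}$''. But the cutting $T'\to T$ may destroy the property of containing infinitely many powers of $\delta$, so $T\subset\overline{W_l}\setminus W_l$ does \emph{not} by itself force powers of $\delta^{(j)}$ into $\overline{W^{(j)}_{l-2}}$; it only forces the coset $\pi_j(\delta^{-1}T)$ there. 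Conversely, if you keep $T'$ (which does contain the powers) you cannot guarantee that a curve $E\subset\phi_l^{-1}(T')$ has all $g-r$ non-root coordinates nonconstant, which is what you need to feed into Lemma~\ref{lem:curveunity}(1). The paper resolves this by formulating the inductive assertion purely in terms of the coset: ``if $\overline{W_l}$ contains a one-dimensional translate $T$ whose constant coordinates are exactly the root-of-unity $b_i$, then $r\ge g-l$''. With this statement the boundary case passes the projected coset $T^{(j)}$ directly to the inductive hypothesis (after checking whether $T^{(j)}$ collapses to a point), and no appeal to powers of $\delta^{(j)}$ is needed.

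For part~(2): your heuristic ``removing a root-of-unity factor would violate part~(1) at the next step'' is not correct as stated. After removing a root-of-unity $b_j$ one has $g^{(j)}=g-1$, $r^{(j)}=r-1$, and one is looking at $\overline{W^{(j)}_{r-2}}$; part~(1) for $D^{(j)}$ only forbids infinitely many powers in $\overline{W^{(j)}_{(g-1)-(r-1)-1}}=\overline{W^{(j)}_{g-r-1}}$, and $r-2\le g-r-1$ fails when $2r>g+1$. What actually rules out this branch is Lemma~\ref{lem:curveunity}(2) applied one level down (giving $r-1\le (l-2)-1$, impossible for $l=r$), together with a further descent. The paper packages all of this into a single inductive assertion: ``either $T\subset\delta^{g-r}\pi_p^{-1}(\overline{W^{(p)}_{l-2(g-r)}})$ or $l\ge r+1$'', proved by induction on $g$ with a case analysis on whether $b_j$ is a root of unity and whether $T^{(j)}$ drops dimension. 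You should reformulate your induction in this way; the iterated-removal picture you sketch is morally right but does not organize itself into a proof without the explicit dichotomy on $T$ at each stage.
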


\begin{remark}
If $D(t)$ is Pellian then (1) is a consequence of Remark \ref{rmk:pelliandegree} and (2) holds vacuously, since $\sqrt{D(t)}$ equals its Pellian part.
\end{remark}

\begin{proof}
(1): Suppose the converse. By Theorem \ref{thm:thmpartialquotwl} infinitely many powers of $\delta$ lie in $\overline{W_{g-r-1}}$. We can construct as above a one-dimensional translate of an algebraic subgroup $T$, contained in $\overline{W_{g-r-1}}$, whose constant coordinates correspond exactly to the roots of unity among the $b_i$.\\

We prove by induction on $g$ that if $\overline{W_l}$ contains a positive dimensional translate $T$ of an algebraic group whose constant coordinates correspond to the $b_i$ which are roots of unity, then $r \ge g-l$. In our case, this proves (1): we have $r \ge g-(g-r-1)=r+1$.\\

If $g=1$, then the assertion holds obviously: if $l=0$ then $\overline{W_0}$ is a point and if $l=1$ then $r \ge 0$.\\

We now take such a $T$; if it has an infinite intersection with $W_l$, then we can apply Lemma \ref{lem:curveunity}, (1): the number of constant coordinates for $T$ is at least $g-l$, proving our assertion.\\

Suppose now that $T$ has finite intersection with $W_l$. Then $T$ has infinite intersection with, say, $\delta \pi_1^{-1}\left(\overline{W^{(1)}_{l-2}}\right)$. By writing $T^{(1)}=\pi_1(T)$, we observe that $r^{(1)}$ is either $r$ or $r-1$ (depending on whether $b_1$ is a root of unity).
\begin{itemize}
\item It might be that $T^{(1)}$ is a point: this happens precisely if $b_1$ is the only non-root of unity. In this case, $r=g-1$ and of course $l \ge 1$, since $\overline{W_0}$ is zero dimensional, so we are done.
\item If $T^{(1)}$ is one dimensional and $b_1$ is a root of unity, then $r^{(1)}=r-1$ and $\delta^{-1}T^{(1)} \sub \overline{W^{(1)}_{l-2}}$. By inductive hypothesis $r-1=r^{(1)} \ge (g-1)-(l-2)=g-l+1$, so we are done.
\item If $T^{(1)}$ is one dimensional and $b_1$ is a not root of unity, then $r^{(1)}=r$ and $\delta^{-1}T^{(1)} \sub \overline{W^{(1)}_{l-2}}$. By inductive hypothesis $r=r^{(1)} \ge (g-1)-(l-2)=g-l+1$, so we are done.
\end{itemize}

(2): Suppose that infinitely many powers of $\delta$ lie in $\overline{W_r}$. Then we can find, as before, a one-dimensional irreducible translate of an algebraic group $T$ contained in $\overline{W_r}$, whose constant coordinates correspond to the $b_i$ which are roots of unity.\\

We denote with the superscript $(p)$ the usual quantities referred to the polynomial $D^{(p)}(t)=(t^2-1)(t-\rho_1)^2 \dotsm (t-\rho_r)^2$. Let $\pi_p: \Gm^g \cong G \rightarrow G^{(p)} \cong \Gm^r$ be the projection on the first $r$ coordinates, let $l$ be a positive integer and let $h=l-2(g-r)$. We prove by induction on $g$ that if $\overline{W_l}$ contains a one dimensional translate of an algebraic group $T$ whose constant coordinates correspond to $b_1, \dotsc , b_r$, then either $T \sub \delta^{g-r} \pi_p^{-1}\left(\overline{W^{(p)}_h}\right)$ or $l \ge r+1$.\\

We first show why this assertion implies (2). Suppose that we have convergents $(p(t),q(t))$ for $\sqrt{D(t)}$ with partial quotient $a(t)$ of degree $d-r' \ge d-r$. Then, if $n=\deg p(t)$, by Theorem \ref{thm:thmpartialquotwl} we have that $\delta^n \in \overline{W_{r'}} \dif \overline{W_{r'-1}}$ and $\delta^{n-1} \not \in \overline{W_{r'-1}}$. Let $h'=r'-2(g-r)$. Suppose that $\delta^n \in \delta^{g-r} \pi_p^{-1}\left(\overline{W^{(p)}_{h'}}\right)$. We have that $\delta^n \not \in \delta^{g-r} \pi_p^{-1}\left(\overline{W^{(p)}_{h'-1}}\right) \sub \overline{W_{r'-1}}$ and that $\delta^{n-1} \not \in \delta^{g-r} \pi_p^{-1}\left(\overline{W^{(p)}_{h'-1}}\right) \sub \overline{W_{r'-1}}$. In this situation, there are convergents $(P(t),Q(t))$ of $\sqrt{D_p(t)}$ such that $\deg P(t)=n-g+r,$ $\deg Q(t)=n-g-1$ and $P(t)-\sqrt{D^{(p)}}Q(t)$ vanishes at $(\infty_+)$ with order $\deg Q(t)+r+1-h'=n-g+r-h'$ (always Theorem \ref{thm:thmpartialquotwl}). Then the pair $(p(t),q(t))=(P(t)(t-\rho_{r+1}) \dotsm (t-\rho_g),Q(t))$ satisfies $\deg p(t)=n$, $ \deg q(t)=n-g-1$ and order of vanishing of $p(t)-q(t)\sqrt{D(t)}$ equal to $n-h'-2(g-r)=\deg q(t)-g-h'+2r=\deg q(t)+d-r'$. Therefore $(p(t),q(t))$ are of the form prescribred in (2). Notice that $p(t)$ and $q(t)$ are necessarily coprime: if $Q(\rho_i)=0$ for some $i \ge r+1$ we would get $\delta^{n-1} \in \overline{W_{r'-1}}$.\\

If there were infinitely many powers of $\delta$ such that $\delta^n \in \overline{W_{r'}} \dif \overline{W_{r'-1}}$ and $\delta^{n-1} \not \in \overline{W_{r'-1}}$ with $\delta^n \not \in \delta^{g-r} \pi_p^{-1}\left(\overline{W^{(p)}_{h'}}\right)$, we could take the closure of such powers and obtain a positive dimensional translate of an algebraic group not contained in $\delta^{g-r} \pi_p^{-1}\left(\overline{W^{(p)}_{h'}}\right)$, but contained in $\overline{W_{r'}}$, with constant coordinates as usual. We can moreover extract a one dimensional $T$ with the same properties: just choose a point not belonging to $\delta^{g-r} \pi_p^{-1}\left(\overline{W^{(p)}_{h'}}\right)$ and consider the translation by that point of a one dimensional algebraic subgroup with the property that the nonconstant coordinates are exactly corresponding to the $b_i$ which are not roots of unity. But now the assertion implies that $r' \ge r+1$, which is a contradiction.\\

We now prove the aforementioned assertion. If $g=1$ then either $r=1$, so $\delta^{g-r} \pi_p^{-1}\left(\overline{W^{(p)}_h}\right)$ is just $\overline{W_l}$, or $r=0$, so $l \ge r+1$.\\

Suppose that $T$ has an infinite intersection with $W_l$; we can apply Lemma \ref{lem:curveunity}, (2): at most $l-1$ coordinates on $T$ are constant, but we know that the constant coordinates are precisely $r$, hence $r \le l-1$.\\

If $T$ does not have an infinite intersection with $W_l$, then it has infinite intersection with, say, $\delta \pi_j^{-1}\left(\overline{W^{(j)}_{l-2}}\right)$. As before, we write $T^{(j)}=\pi_j(T)$ and $r^{(j)}$ for the number of roots of unity among the $b_i$, but excluding $b_j$. Moreover, we also consider the projection $\pi^{(j)}_p: G^{(j)} \rightarrow G^{(j)}_p$, where we project onto the first $r$ coordinates if $j \ge r+1$ and onto the first $r$ coordinates, $j$ excluded, if $j \le r$. We denote with both the superscripts $(p)$ and $(j)$ the relevant objects related to $D^{(p)}(t)$ if $j \ge r+1$ or to $\frac{D^{(p)}(t)}{t-\rho_j}$ otherwise. We set $h(j)=h$ if $j \ge r+1$ and $h(j)=h-2$ otherwise. Notice that if $T^{(j)} \sub \delta^{g-r} \pi^{(j)-1}_p \left(\overline{W^{(j)(p)}_{h(j)}}\right)$ then $T \sub \delta^{g-r} \pi^{-1}_p \left(\overline{W^{(p)}_h}\right)$. There are different possibilities:
\begin{itemize}
\item If $b_j$ is a root of unity, say $j=1$, then $\delta^{-1} T^{(1)}$ is one dimensional and contained in $W^{(1)}_{l-2}$ and $r^{(1)}=r-1$. $\delta^{-1} T^{(1)}$ is not contained in $\delta^{g-r} \pi^{(1)-1}_p \left(\overline W^{(p)}_h\right)$ and hence by inductive hypothesis we obtain $l-2 \ge r^{(1)}+1$, so we are done.
\item If $b_j$ is not a root of unity, say $j=g$, and $T^{(g)}$ is one dimensional, we have that it is contained in $W^{(g)}_{l-2}$ and $r^{(g)}=r$. $\delta^{-1}T^{(g)}$ is not contained in $\delta^{g-r} \pi^{(g)-1}_p \left(\overline{W^{(j)(p)}_{h-2}}\right)$ and by inductive hypothesis we obtain $l-2 \ge r$, so we are done.
\item If $b_j$ is not a root of unity, say $j=g$, and $T^{(g)}$ is zero dimensional, then $b_g$ is the only root of unity among the $b_i$. If $T$ is not contained in $\delta^{-1} \pi_g^{-1}\left(\overline{W^{(g)}_{l-2}}\right)$, then it has finite intersection (both are closed) and it has therefore either infinite intersection with $W_l$ (this gives the bound, as proven above) or infinite intersection with some $\delta^{-1} \pi_j^{-1} \left(\overline{W^{(j)}_{l-2}}\right)$. This implies the inequality by induction, as proven before.

\end{itemize}

\end{proof}

\begin{remark}
The argument above, when we prove (2) using the relevant assertion, proves also that if $(p(t),q(t))$ is a partial quotient of a Pellian polynomial, then $q(t)$ divides a Chebyshev polynomial. This is because what happens in $G^{(p)}$ is independent of $\rho_{r+1}, \dotsc , \rho_g$.

\end{remark}

\begin{remark}
Notice that, for any $l < \frac{d}{2}$, $\delta^n \not \in \overline{W_l}$ eventually, unless this inclusion is explained by a Pellian polynomial. This situation is compatible with a Theorem of Zannier (\cite{zannier}*{Theorem 1.3}).
\end{remark}

\subsection{Examples}\label{sec:examples}
We now show examples for which our previous bounds cannot be sharpened, in particular excluding the convergents that come from Pellian polynomials.\\
\\
We fix positive integers $\frac{d}{2} \le l \le g$, $p \le l-1$ and $q \le l$ such that $p+q=g$. Let $\omega$ be a primitive $2l$-th root of unity. Let $h$ be a non-root of unity. We define:
$$b_i=\omega^i \text{ for } 1 \le i \le p$$
$$b_i=h \cdot \omega^{2i} \text{ for } p+1 \le i \le p+q$$
We claim that the corresponding $\delta$ satisfies $\delta^n \in W_l$ for infinitely many $n$. We check this via the isomorphism $i$:
$$\delta^n \in W_l \text{ if and only if there exist } z_j \in \mathcal{C} \setminus S \text{ such that } \prod_{j=1}^l \frac{z_j-b_i}{z_j-b_i^{-1}}=b_i^{2n} \text{ for every $1 \le i \le g$}$$
We now put:
$$n=2lk \text{ with $k$ a positive integer}$$
$$z_j=t \omega^{2j} \text{ for $t$ so that no $z_j$ is a $b_i^{\pm 1}$}$$
If $1 \le i \le p$ we get:
$$\prod_{j=1}^l \frac{z_j-b_i}{z_j-b_i^{-1}}=\prod_{j=1}^l \frac{t \omega^{2j}-\omega^i}{t \omega^{2j}-\omega^{-i}}=\prod_{j=1}^l \omega^{2i} \frac{t \omega^{2j}-\omega^i}{t \omega^{2j+2i}-\omega^i}=1=b_i^{2lk}$$
If $p+1 \le i \le p+q$ we get:
$$\prod_{j=1}^l \frac{z_j-b_i}{z_j-b_i^{-1}}=\prod_{j=1}^l \frac{t\omega^{2j}-h\omega^{2i}}{t\omega^{2j}-h^{-1}\omega^{-2i}}=\prod_{j=1}^l \omega^{4i} \frac{t\omega^{2j-2i}-h}{t\omega^{2j+2i}-h^{-1}}=\prod_{j=1}^l \frac{t\omega^{2j}-h}{t\omega^{2j}-h^{-1}}$$
and $b_i^{2lk}=h^{2lk}$, which are both independent of $i$. The rational map:
$$f: \mathbb{G}_m \rightarrow \mathbb{P}_1$$
$$f(t)=\prod_{j=1}^l \frac{t\omega^{2j}-h}{t\omega^{2j}-h^{-1}}$$
is nonconstant (for instance, $f(h)=0, f(h^{-1})=\infty$), so its image (a constructible) is dense in $\mathbb{P}_1$, therefore it contains $h^{2lk}$ for $k$ big enough, so we're done.

\begin{remark}
$p$ is the number of roots of unity among the $b_i$. By Theorem \ref{thm:rootsofunity}, $p+1 \le l$ and $q = g-p \le l$, so that our examples show that any $p$ such that $g-l \le p \le l-1$ can be realized.
\end{remark}

These examples shows that there are polynomials $D(t)$ with a non-periodic expansion in which infinitely many partial quotients have a degree greater than one.\\

For instance, if we consider $g=4$, $\zeta$ a primitive sixth root of unity and \[b_1=\zeta,b_2=\zeta^2,b_3=2\zeta^2,b_4=2\zeta^4,\] so that \[\rho_1=\frac{1}{2}, \rho_2=-\frac{1}{2}, \rho_3=\frac{-5+3\sqrt{3}i}{8},\rho_4=\frac{-5-3\sqrt{3}i}{8},\] we obtain the polynomial:
\begin{align*}
D(t)&=(t^2-1)\left(t-\frac{1}{2}\right)^2\left(t+\frac{1}{2}\right)^2(t-\left(\zeta^2+\frac{\zeta^{-2}}{4}\right))^2(t-\left(\zeta^{-2}+\frac{\zeta^2}{4}\right))^2\\
&=\frac{1}{4096}(t^2-1)(4t^2-1)^2(16t^2+20t+13)^2.
\end{align*}

We claim that the partial quotients of $\sqrt{D(t)}$ have degree $5,1,2,1,2, \dotsc$. The first partial quotient corresponds to the polynomial part. We also see that $\delta^{3n}$ belongs to $W_3$ (as it has been explicitely computed above: in this case we have $p=q=2$ and $h=2$). No power of $\delta$ belongs to $\overline{W_2}$ with the exception of $\delta$ and $\delta^2$. If $\delta^n \in \overline{W_2} \dif W_2$ then $\delta^{n-1} \in \pi_j^{-1}(W^{(j)}_0)$ for some $j=1,2,3,4$, but this is false. We compute:
\begin{equation*}
\frac{z_1-\zeta}{z_1-\zeta^5} \frac{z_2-\zeta}{z_2-\zeta^5}=\zeta^{2n} \leftrightarrow z_1z_2(1-\zeta^{2n})-(z_1+z_2)(\zeta-\zeta^{2n+5})+(\zeta^2-\zeta^{2n+4})=0
\end{equation*}
\begin{equation*}
\frac{z_1-\zeta^2}{z_1-\zeta^4} \frac{z_2-\zeta^2}{z_2-\zeta^4}=\zeta^{4n} \leftrightarrow z_1z_2(1-\zeta^{4n})-(z_1+z_2)(\zeta^2-\zeta^{4n+4})+(\zeta^4-\zeta^{4n+2})=0
\end{equation*}

If $n \equiv 0 \pmod 3$ then we obtain either $-1=z_1+z_2=1$ or both $z_1,z_2=\infty$, for which $\frac{z_1-b_3}{z_1-b_3^{-1}}\frac{z_2-b_3}{z_2-b_3^{-1}} \neq b_3^{2n}$. If $n \equiv 1 \pmod 3$ we obtain $=z_1z_2=1$, hence $\frac{z_1-b_3}{z_1-b_3^{-1}}\frac{z_2-b_3}{z_2-b_3^{-1}}=b_3^2$, or $z_1=\infty$ and $z_2=0$ (or viceversa) and the same holds. If $n \equiv 2 \pmod 3$ then $z_1z_2=z_1+z_2=0$, for which $\frac{z_1-b_3}{z_1-b_3^{-1}}\frac{z_2-b_3}{z_2-b_3^{-1}}=b_3^4$, or $z_1=\infty$ and $1=z_2=-1$ (or viceversa). In any case, if $n \ge 3$ then $\delta^n \not \in \overline{W_2}$.\\

Therefore the degrees of the partial quotients (with the exception of the first) can only be $1$ or $2$ and, since $\delta^{3n} \in \overline{W_3}$ for every positive integer $n$, the only admissible sequence is $5,1,2,1,2 \dotsc$.

\subsubsection{If $g=2$}
Say that $D(t)=(t^2-1)(t-\rho_1)^2(t-\rho_2)^2$ with $\rho_1\neq \rho_2$ and $\rho_1,\rho_2\neq\pm 1$.

In this case the  closure of $W_1$ is given by the equation
\[
(b_1^{-1}-b_2^{-1})x_1x_2 +(b_2-b_1^{-1})x_1 +(b_2^{-1}-b_1)x_2+b_1-b_2=0.
\]
According to how many of the $b_i$ are roots of unity we have the following cases:
If both $b_1,b_2$ are roots of unity, then $D(t)$ is Pellian.

If $b_1$ is a root of unity, and $b_1^2$ has order $m$, then $\deg a_n$ is eventually 1, and one convergent of $\sqrt{D(t)}$ every $m$ is given by a solution to the Pell equation $x(t)-(t^2-1)(t-\rho_1)^2 y(t)^2=1$, which is solvable because $(t^2-1)(t-\rho_1)^2$ is Pellian.

If none of $b_1,b_2$ is a root of unity, then again $\deg a_n$ is eventually 1.

\subsubsection{If $g=4$}

We now exhibit a family of polynomials which admit infinitely many partial quotients of degree $2$ which is not of the form above.\\

We set $b_1=\theta_1, b_2=h, b_3=\theta_2 h, b_4=\theta_3 h$, where the $\theta_i$ are roots of unity of order odd $N_i$. We consider all the $n$ such that $2n-3$ is a multiple of all the $N_i$ and we take the closure of the set consisting in all the $\delta^n$. Such a set $Z$ is parametrized by $(\theta_1^3,t,\theta_2^3t,\theta_3^4t)$ for $t \in \C^*$. Computations imply that if:
\begin{equation*}
h^3(1+\theta_1+\theta_1^2)\theta_2\theta_3-h^2(\theta_2+\theta_3+\theta_2\theta_3)\theta_1-h(1+\theta_2+\theta_3)\theta_1+(1+\theta_1+\theta_1^2)=0
\end{equation*}
Then $Z \sub \overline{W_3}$. Then we can choose $\theta_1,\theta_2,\theta_3$ as we wish and find $h$ which satisfies the equation above. This family of examples involves roots of unity of arbitrarily large order with a fixed $g$.

\section{Non squarefree case: Notation and setting}\label{sec:nonsquarefreeGenJac}
We now review our constructions in the case of a non squarefree $D_1(t)$.\\

As above, let $\Cv:\{U^2=T^2-V^2\} \subseteq \P_2$; this is a smooth curve of genus 0. We fix the affine chart given by $V \neq 0$ and use affine coordinates $t=T/V$ and $u=U/V$. With respect to this chart $\Cv$ has two points at infinity,  $(\infty_+)=(1:1:0)$ and $(\infty_-)=(-1:1:0)$.\\

Let now $D_1(t)=(t-\rho_1)^{e_1} \dotsm (t-\rho_g)^{e_g} \in  \C[t]$ be a monic polynomial of degree $h$ with all the $\rho_i$ distinct and such that $\rho_i \neq \pm 1$. Let $D(t)=(t^2-1)D_1(t)^2$, which has degree $2s$.

For each $\rho_i$ we define as before $\xi_i^\pm=(\rho_i,\pm\sqrt{\rho_i^2-1}) \in \Cv$ for some choice of the square roots. We define $\emme=\sum_{i=1}^g e_i\left((\xi_i^+) + (\xi_i^-) \right)$, which is a divisor on $\Cv$, and we denote by $S$ its support. We take $\emme$ as the modulus and write $J_\emme$ for the generalized Jacobian of $\left(\mathcal{C},\emme\right)$.\\

Let $\delta$ be the class of the divisor $(\infty_-)-(\infty_+)$ in $J_\emme$. In the non squarefree case there are no Pellian polynomials and Proposition \ref{prop:pellian-gen} never occurs. In fact, all the convergents $(p(t),q(t))$ for a Pellian polynomial have the property that $q(t)D_1(t)$ must divide a Chebyshev polynomial, as explained in the Introduction; these are all squarefree.\\

In order to deal with the multiplicities, we will need to use derivatives. Let $s=t+u$ (notice that $\C(\Cv)=\C(s)$) and we define $\dplus=s{\diff{}{s}}$ and $\dminus=s^{-1}{\diff{}{s^{-1}}}$. We denote by $\dplus^k$ and $\dminus^k$ the $k$-th iterate of $\dplus$ and $\dminus$ respectively. These derivations are well behaved with respect to the Galois conjugation $g: (t,u) \rightarrow (t,-u)$. Any rational function on $\Cv$ can be written uniquely as $p(s+\frac{1}{s})+(s-\frac{1}{s})q(s+\frac{1}{s})$, since $2t=s+\frac{1}{s}$ and $2u=s-\frac{1}{s}$. If we apply $\dplus$:
\begin{equation*}
\dplus\left(p\left(s+\frac{1}{s}\right)+\left(s-\frac{1}{s}\right)q\left(s+\frac{1}{s}\right)\right)=\left(s-\frac{1}{s}\right)p'\left(s+\frac{1}{s}\right)+\left(s-\frac{1}{s}\right)^2q'\left(s+\frac{1}{s}\right)+\left(s+\frac{1}{s}\right)q\left(s+\frac{1}{s}\right)
\end{equation*}

And we notice that $g$ interchanges $s$ and $s^{-1}$, so that $\dplus \circ g + g \circ \dplus=0$. In particular, if a function $f$ depends on $t$ only, then $\dplus f$ is $u$ times a function which depends on $t$ only. The same holds for $\dminus$ (just think of $s^{-1}$ instead of $s$).\\

We work again with a quotient of $J_\emme$. $\Gal(\C(\Cv)/\C(t))$ acts on the points of $\mathcal{C}$ and by linearity on $\Div(\Cv)$ and on $J_\emme$. Let $\hat{J}_\emme$ be the subgroup of $J_\emme$ invariant for the action of $g$, and define $G=J_\emme/\hat{J}_\emme$.\\

We will now construct an explicit isomorphism between $G$ and $\Gm^g \times \Ga^{\sum_{i=1}^g (e_i-1)}$ (we will use multiplicative notation). Let $E\in\Div^0_\emme(\Cv)$ be a divisor of degree zero. Since $\mathcal{C}$ has genus zero, we can write $E=\div(f)$ for some $f\in\C(\Cv)^*$, and we define
\begin{equation*}
i(E)=\left(\dotsc,\frac{f(\xi_i^+)}{f(\xi_i^-)}, \dotsc , \frac{\dplus f}{f}(\xi^+_i)-\frac{\dminus f}{f}(\xi^-_i), \dotsc , \dplus^{k} \left(\frac{\dplus f}{f}\right) (\xi^+_i)-(-1)^k\dminus^{k}\left(\frac{\dminus f}{f}\right)(\xi^-_i), \dotsc \right)
\end{equation*}

For all $k \le e_i-2$. In other words, we are taking the logarithmic derivative and its derivatives up to order $e_i-1$ for each $\rho_i$. We see that $i:\Div^0_\emme(\Cv)\to \Gm^g$ is a well-defined group homomorphism, since $\dplus$ and $\dminus$ are linear are linear and for every derivation $\frac{(fg)'}{fg}=\frac{f'}{f}+\frac{g'}{g}$ holds.

\begin{prop}\label{prop:smallgeneralizedjacobiannonsqr}
The map $i:\Div^0_\emme(\Cv)\to \Gm^g$ induces a group isomorphism $\iota:G\to\Gm^g \times \Ga^{\sum_{i=1}^g (e_i-1)}$.
\end{prop}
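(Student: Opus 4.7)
The proof extends Proposition~\ref{prop:smallgeneralizedjacobian} to the non-squarefree setting. The values $f(\xi_i^{\pm})$ from the squarefree case are now supplemented by iterated logarithmic derivatives which encode the $e_i$-jet of $f$ at each marked point, and the overall argument takes the same shape: well-definedness, descent to the quotient, and bijectivity via interpolation.

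First I would verify that $i$ is a well-defined group homomorphism on $\Div^0_\emme(\Cv)$. Since $\Cv$ has genus zero, each $E \in \Div^0_\emme(\Cv)$ can be written as $\div(f)$ for some $f \in \C(\Cv)^*$ determined up to a scalar in $\C^*$; rescaling $f \mapsto cf$ does not affect the ratios $f(\xi_i^+)/f(\xi_i^-)$, nor any of the expressions $\dplus^k(\dplus f/f)$ or $\dminus^k(\dminus f/f)$, so $i(E)$ is intrinsic. The multiplicativity of the $\Gm$-factor and the additivity of the logarithmic derivative (turning products into sums) make $i$ a group homomorphism.

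Next I would show that $i$ descends to $G = J_\emme/\hat{J}_\emme$. If $f$ satisfies $\text{ord}_P(1-f) \ge e_i$ at $P = \xi_i^{\pm}$, then $f(P) = 1$ and $\dplus f / f$ has order $e_i - 1$ at $P$ (since $\dplus$ is a nondegenerate derivation at this affine point, where $s$ is a unit); iterating, $\dplus^k(\dplus f/f)$ vanishes at $P$ for $0 \le k \le e_i - 2$, and similarly for $\dminus$. Hence the $\Gm$- and $\Ga$-coordinates of $i(\div f)$ are trivial, so $i$ descends to $J_\emme$. For the further quotient by $\hat{J}_\emme$, the key tool is the anticommutation $\dplus \circ g + g \circ \dplus = 0$ and the analogous relation for $\dminus$, which imply that the specific signed combination $\dplus^k(\dplus f/f)(\xi_i^+) - (-1)^k \dminus^k(\dminus f/f)(\xi_i^-)$ vanishes for every $f \in \C(t)^* \subseteq \C(\Cv)^*$. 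Combined with the observation that every class in $\hat{J}_\emme$ admits a representative $\div(f)$ with $f$ Galois-invariant (so $f \in \C(t)^*$), this gives descent to $G$.

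Finally, I would establish bijectivity of $\iota$. For surjectivity, a Hermite-type interpolation argument produces, for any prescribed target in $\Gm^g \times \Ga^{\sum(e_i-1)}$, a rational function on $\Cv$ realizing the required $(e_i-1)$-jets at each $\xi_i^{\pm}$; the lack of obstruction comes from the genus-zero assumption. For injectivity, if $i(\div f) = 0$ then the jets of $f$ at $\xi_i^+$ and $\xi_i^-$ agree in the appropriate Galois-equivariant sense, so a Hermite interpolation in $\C(t)$ yields a Galois-invariant $r(t) \in \C(t)^*$ whose jet at each $\rho_i$ matches that of $f$ to order $e_i$; then $f/r$ is $\msim$-equivalent to a constant, so $\div(f/r) = 0$ in $J_\emme$, while $\div(r) \in \hat{J}_\emme$ is trivial in $G$. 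The main obstacle is the sign and parity bookkeeping verifying that the anticommutation really does force the signed combination to vanish on all of $\hat{J}_\emme$; once that is established, the interpolation and the rest are straightforward adaptations of the squarefree case.
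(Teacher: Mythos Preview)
Your proposal is correct and follows essentially the same route as the paper: verify that the logarithmic-derivative coordinates are insensitive to scalars and additive under products, check that they vanish both on functions with $1-f$ of high order at the $\xi_i^{\pm}$ (descent to $J_\emme$) and on Galois-invariant functions via the anticommutation $\dplus \circ g + g \circ \dplus = 0$ (descent to $G$), and then establish bijectivity by Hermite-type interpolation in both directions. The only notable difference is one of presentation: the paper carries out the interpolation by hand, exhibiting explicit families $p(s+\tfrac{1}{s})$ for injectivity and $1+p(s)$ for surjectivity and tracking the leading coefficient of the highest derivative to see that the jets can be prescribed freely, whereas you invoke Hermite interpolation as a black box; either is fine here since the underlying curve is $\P^1$.
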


\begin{proof}
First, we notice that if $f$ depends on $t$ only, then by the argument explained above $\dplus^{k}\left(\frac{\dplus f}{f}\right)$ does as well if $k$ is odd and it does after a multiplication by $u$ if $k$ is even. Since $g \dplus^{k}\left(\frac{\dplus f}{f}\right)(\xi_i^+)=\dminus^{k}\left(\frac{\dminus f}{f}\right)(\xi_i^-)$ (we are just interchanging $u$ and $-u$), then $i(\div f)=0$. Moreover if $1-f$ vanishes at every $\xi_i^{\pm}$ with order $e_i$, then all the derivatives above vanish: an explicit computation shows that the derivatives of $f'/f$ only admit $f$ at the denominator and in the numerators only derivatives up to the $e_i-1$ occur while each monomials contains a derivative of order at least one. We conclude that $\iota$ is well-defined.\\

We check injectivity. Let $f$ be a rational function such that $\div(f)\in\ker \iota$, so that
$$f(\xi_{\rho_i}^+)=f(\xi_{\rho_i}^-) \quad \forall i=1,\dotsc,g.$$
$$\dplus^k\frac{\dplus f}{f}(\xi_i^+)=(-1)^k\dminus^{k}\frac{\dminus f}{f}(\xi_i^-) \quad \forall i=1,\dotsc,g \text{ and } k=0, \dotsc, e_i-2$$
We can choose a Galois-invariant function $p(t)\in\C(t)^* \sub \C(\Cv)^*$ that coincides with $f$ on all the $\xi_{\rho_i}^{\pm}$ and it coincides also with the derivatives. In order to see this, let us compute the first values of $\dplus^k \frac{\dplus p(s+\frac{1}{s})}{p(s+\frac{1}{s})}$:
\begin{equation*}
\frac{\left(s-\frac{1}{s}\right)p'\left(s+\frac{1}{s}\right)}{p\left(s+\frac{1}{s}\right)}
\end{equation*}
\begin{equation*}
\frac{\left(s+\frac{1}{s}\right)p'\left(s+\frac{1}{s}\right)p\left(s+\frac{1}{s}\right)+\left(s-\frac{1}{s}\right)^2p''\left(s+\frac{1}{s}\right)p\left(s+\frac{1}{s}\right)+\left(s-\frac{1}{s}\right)^2p'\left(s+\frac{1}{s}\right)^2}{p\left(s+\frac{1}{s}\right)^2}
\end{equation*}

And we observe that the denominator is always $p(s+\frac{1}{s})^k$, while the unique coefficient of the $k$-the derivative of $p$ is $(s-\frac{1}{s})^kp\left(s+\frac{1}{s}\right)^{k-1}$ (this is easily seen by induction). In particular it is nonzero at the $\xi_i^{\pm}$, since the $\rho_i$ are different from $\pm 1$. We can then interpolate: for each $\xi_i^+$ we just need to prescribe the value and the derivatives of $p$ at some points. The values for the $\xi_i^-$ are automatically correct by the condition above. Therefore (as the logarithmic derivative gives an homomorphism to $\Ga$) $\div(\frac{f}{p})$ is zero in $J_\emme$. Since $\div(p)$ is zero in $G$ we have that also $\div(f)$ is zero in $G$; this shows the injectivity.\\

We check that $i$ is surjective and this implies the surjectivity of $\iota$. Let $p(s)$ be a polynomial which vanishes at each $b_i^{\pm 1}=s(\xi_i^{\pm})$. Let us compute the first values of $\dplus^k \frac{\dplus 1+p(s)}{1+p(s)}$:
\begin{equation*}
\frac{sp'(s)}{1+p(s)}
\end{equation*}
\begin{equation*}
\frac{sp'(s)+sp(s)p'(s)+s^2(1+p(s))p''(s)+s^2p'(s)^2}{(1+p(s))^2}
\end{equation*}
And we observe that the denominator is $(1+p(s))^k$ and the coefficient of $p''(s)$ is $s^k(1+p(s))^{k-1}$. We can choose $p(s)$ (which vanishes at the $b_i^{\pm 1}$) in order to obtain the whole $\Ga$ part and being $1$ in the $\Gm$ part, by prescribing each derivative. Then we can fix the $\Gm$ part by interpolation (i.e. we first choose a function that gives the correct values at the $\Gm$ part and then we choose the polynomial $p(s)$).

\end{proof}

Now we seek the image of $\delta$ in $G$, which is the image of $(\infty_-)-(\infty_+)$. We see that:
\begin{equation*}
\div(s) = (\infty_-)-(\infty_+)
\end{equation*}

We compute the logarithmic derivatives of $s$.
\begin{equation*}
\frac{\dplus s}{s}=1 \text{ and } \frac{\dminus s}{s}=-1
\end{equation*}

Hence:
\begin{equation*}
i(\delta)=\left(\dotsc,b_i^2,\dotsc, 2, \dotsc, 0, \dotsc\right)
\end{equation*}

In particular, we recover that as long as some $e_i$ is greater than one, then $D(t)$ is not Pellian.

\section{Non squarefree case: the $W_l$}\label{sec:nonsquarefreeWl}

\subsection{First maps}

Let $h=g+\sum_{i=1}^g (e_i-1)$ (the degree of $D_1(t)$). For a point $P \in \Cv \dif S$, we denote with $[P]$ the image of the divisor $(P)-(\infty_+)$ in $G \cong \Gm^g \times \Ga^{h-g}$. We have that:

\begin{equation*}
[P]=(P)-(\infty_+)= \div \left( s-s(P) \right) \text{ as long as } P \neq \infty_+
\end{equation*}
\begin{equation*}
[\infty^+] \text{ is the trivial divisor}
\end{equation*}

If we denote by $z=s(P)$, the image of $[P]$ via $i$ is:

\begin{equation*}
i([P])=\left( \dotsc , \frac{z-b_i}{z-b_i^{-1}}, \dotsc , \dplus^{k} \left(\frac{\dplus (s-z)}{s-z}\right) (\xi^+_i)-(-1)^k\dminus^{k}\left(\frac{\dminus (s-z)}{s-z}\right)(\xi^-_i), \dotsc \right)
\end{equation*}

The first values of $\dplus^{k} \left(\frac{\dplus (s-z)}{s-z}\right)$ are:
\begin{equation*}
\left(\frac{\dplus (s-z)}{s-z}\right)=\frac{s}{s-z}=F_0(s,z)
\end{equation*}
\begin{equation*}
\dplus^{1} \left(\frac{\dplus (s-z)}{s-z}\right)=-\frac{sz}{(s-z)^2}=F_1(s,z)
\end{equation*}
\begin{equation*}
\dplus^{2} \left(\frac{\dplus (s-z)}{s-z}\right)=\frac{sz(s+z)}{(s-z)^3}=F_2(s,z)
\end{equation*}
\begin{equation*}
\dplus^{3} \left(\frac{\dplus (s-z)}{s-z}\right)=-\frac{sz(s^2+4sz+z^2)}{(s-z)^4}=F_3(s,z)
\end{equation*}

We observe that the denominator is $(s-z)^{k+1}$ and that the numerator is a homogenous polynomial in $s$ and $z$ of degree $k+1$. We perform the same computation for the $\dminus$ part. Notice that $s-z=-\frac{s^{-1}-z^{-1}}{s^{-1}z^{-1}}$.
\begin{equation*}
\left(\frac{\dminus (-\frac{s^{-1}-z^{-1}}{s^{-1}z^{-1}})}{-\frac{s^{-1}-z^{-1}}{s^{-1}z^{-1}}}\right)=\frac{z^{-1}}{s^{-1}-z^{-1}}=-\frac{s}{s-z}=G_0(s,z)
\end{equation*}
\begin{equation*}
\dminus^1 \left(\frac{\dminus (-\frac{s^{-1}-z^{-1}}{s^{-1}z^{-1}})}{-\frac{s^{-1}-z^{-1}}{s^{-1}z^{-1}}}\right)=-\frac{s^{-1}z^{-1}}{(s^{-1}-z^{-1})^2}=-\frac{sz}{(s-z)^2}=G_1(s,z)
\end{equation*}
\begin{equation*}
\dminus^2 \left(\frac{\dminus (-\frac{s^{-1}-z^{-1}}{s^{-1}z^{-1}})}{-\frac{s^{-1}-z^{-1}}{s^{-1}z^{-1}}}\right)=\frac{s^{-1}z^{-1}(s^{-1}+z^{-1})}{(s^{-1}-z^{-1})^3}=-\frac{sz(s+z)}{(s-z)^3}=G_2(s,z)
\end{equation*}
\begin{equation*}
\dminus^3 \left(\frac{\dminus (-\frac{s^{-1}-z^{-1}}{s^{-1}z^{-1}})}{-\frac{s^{-1}-z^{-1}}{s^{-1}z^{-1}}}\right)=-\frac{s^{-1}z^{-1}(s^{-2}+4s^{-1}z^{-1}+z^{-2})}{(s^{-1}-z^{-1})^4}=-\frac{sz(s^2+4sz+z^2)}{(s-z)^4}=G_3(s,z)
\end{equation*}

We see that $F=(-1)^{k+1}G$. Notice that both $z=0$ and $z=\infty$ are zeroes of $F$ and $G$, unless $k=0$, where $z=0$ gives $1$ and $-1$ respectively. If we evaluate them to $\xi_i^{\pm}$ accordingly, then the coordinate in $\Ga$ is (for instance for $k=3$):
\begin{equation*}
-\frac{b_iz(b_i^2+4b_iz+z^2)}{(b_i-z)^4}-\frac{b_i^{-1}z(b_i^{-2}+4b_i^{-1}z+z^2)}{(b_i^{-1}-z)^4}=F_3(b_i,z)-F_3(b_i^{-1},z)
\end{equation*}

The image of $[\infty^+]$ is $(\dotsc,1,\dotsc,0,\dotsc)$. We had already observed that the image of $\delta$ is:

\begin{equation*}
i(\delta)=(\dotsc, b_i^2, \dotsc , 1, \dotsc , 0 , \dotsc )
\end{equation*}

\subsection{The $W_l$}

We can now define the $W_l$ as before. We fix a nonnegative integer $l \le g$ and we have maps

\begin{equation*}
\begin{tikzcd}[column sep=3.2em]
(\Cv \dif S)^l \arrow[r, "\phi_l"] & G  \arrow[r, "i"] & \mathbb{G}_m^g \times \Ga^{\sum_{i=1}^g (e_i-1)}
\end{tikzcd}
\end{equation*}

so that $\phi_l(P_1,\dotsc,P_l)=[P_1]+\dotsb+[P_l]$ and therefore
$$(i \circ \phi_l) (P_1,\dotsc,P_l)=\left(\dotsc,\prod_{j=1}^l \frac{z_j-b_i}{z_j-b_i^{-1}},\dotsc, \sum_{j=1}^l F_k(b_i,z_j)+(-1)^kF_k(b_i^{-1},z_j), \dotsc \right)$$
where $z_j=s(P_j)$. We set again $S^*=s(S)=\{b_1,b_1^{-1}, \dotsc , b_g,b_g^{-1} \}$. As $i$ and $s$ are isomorphisms, the map $i \circ \phi_l$ is conjugated to the map:

\begin{equation*}
\begin{tikzcd}[column sep=3.2em]
\psi_l: (\mathbb{P}_1(\mathbb{C}) \setminus S^*)^l \arrow[r] & \mathbb{G}_m^g \times \Ga^{\sum_{i=1}^g (e_i-1)}
\end{tikzcd}
\end{equation*}

\begin{equation*}
\psi_l(z_1,\dotsc,z_l)=\left( \dotsc ,\prod_{j=1}^l \frac{z_j-b_i}{z_j-b_i^{-1}}, \dotsc, \sum_{j=1}^l F_k(b_i,z_j)+(-1)^kF_k(b_i^{-1},z_j), \dotsc \right)
\end{equation*}

We define $W_l$ to be the image of $\phi_l$. $W_l$ is again a constructible set (since the ground field is $\C$).

\section{Non squarefree case: Geometry of partial quotients}\label{sec:nonsquarefreeGeo}

As in the squarefree case, the $W_l$ are related to the degrees of the partial quotients and we can recover some analogs of the statements proven before.

\subsection{Preliminary lemmas}
The proofs are essentially the same as in the squarefree case. Lemmas \ref{lem:qnqn1}, \ref{lem:pre2} and \ref{lem:pre3} hold as above. We just review the proof of Lemma \ref{lem:pre1}. Let $f(t)=p(t)+uq(t)=P(s+\frac{1}{s})+(s-\frac{1}{s})Q(s+\frac{1}{s})$. We want to show that $\div f$ is zero in $G$ if and only if $D_1(t)$ divides $q(t)$.

\begin{proof}
Let us first compute the logarithmic derivative of $f$. We have that:
\begin{equation*}
\frac{\dplus f}{f}=\frac{\left(s-\frac{1}{s}\right)P'\left(s+\frac{1}{s}\right)+\left(s+\frac{1}{s}\right)Q\left(s+\frac{1}{s}\right)+\left(s-\frac{1}{s}\right)^2Q'\left(s+\frac{1}{s}\right)}{P\left(s+\frac{1}{s}\right)+\left(s-\frac{1}{s}\right)Q\left(s+\frac{1}{s}\right)}=
\end{equation*}
\begin{equation*}
\frac{uP(t)P'(t)-u^2P'(t)Q(t)+tP(t)Q(t)-utQ(t)^2+P(t)Q'(t)-uQ(t)Q'(t)}{P(t)^2-u^2Q(t)}
\end{equation*}

And we observe that, by induction, the successive derivatives have $P(t)$ as coefficient of $Q'(t),Q''(t) \dotsc$. Suppose $D_1(t)$ divides $q(t)$. Then by using the isomorphism $i$:

\begin{equation*}
\left(\dotsc,\frac{f(\xi_{\rho_i}^+)}{f(\xi_{\rho_i}^-)},\dotsc,\dplus^{k}\left(\frac{\dplus f}{f}\right)(\xi^+_i)-(-1)^k\dminus^{k}\left(\frac{\dminus f}{f}\right)(\xi^-_i), \dotsc\right)
\end{equation*}

We see that these vanish since $D_1(t)$ divides $Q(t)$ and the terms not containing derivatives of $Q(t)$ only occur with $u$ if $k$ is even and without if it is not. Vice-versa, if the image is zero, then we obtain that the $k+1$-th derivative of $Q(t)$ vanishes at $\rho_i$ for each $i$ (notice that $P(\rho_i) \neq 0$ since $\div f$ has support disjoint from $S$).
\end{proof}

\subsection{Translation over $G$}

We now give a weaker version of Theorem \ref{thm:thmpartialquotwl}, which relates the powers of $\delta$ lying in the $W_l$ and the convergents $(p(t),q(t))$ with $p(t)$ coprime with $D_1(t)$.

\begin{thm}\label{thm:thmpartialquotwlnonsqr}
Both $\delta^n \in W_l \dif W_{l-1}$ and $\delta^{n-1} \not \in W_{l-1}$ occur simultaneously if and only if there exist convergents $(p(t),q(t))$ of $\sqrt{D(t)}$ such that:\\
i) $\deg$ $p(t)=n$;\\
ii) the order of $p(t)-uD_1(t)q(t)$ at $(\infty_+)$ is $\deg$ $q(t)+d-l$;\\
iii) $p(t)$ and $D_1(t)$ are relatively prime.
\end{thm}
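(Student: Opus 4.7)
The plan is to imitate the proof of Theorem~\ref{thm:thmpartialquotwl}, with three simplifications: (a) there are no Pellian polynomials when $D_1$ is non-squarefree, so Remark~\ref{rmk:pelliandegree}-type obstructions do not appear; (b) the statement involves $W_l$ itself rather than the closure $\overline{W_l}$, so no induction via Proposition~\ref{prop:closurewl} is needed; and (c) condition (iii), coprimality of $p$ with $D_1$, plays the role that in the squarefree argument was played by ensuring the supporting points $x_i$ of a representative avoid $S$.

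For the ``if'' direction, suppose convergents $(p,q)$ of $\sqrt{D(t)}$ satisfy (i)--(iii). Lemma~\ref{lem:pre2}, whose proof carries over verbatim as remarked at the start of Section~\ref{sec:nonsquarefreeGeo}, gives
\[
\div(p(t)-uD_1(t)q(t)) = (n-l)(\infty_+) - n(\infty_-) + \sum_{i=1}^{l}(x_i),
\]
with the $x_i$ on the affine part of $\Cv$. Coprimality of $p$ with $D_1$ forces $p(\rho_j)\neq 0$ for every $j$, so the value of the function at $\xi_j^\pm$ equals $p(\rho_j)\neq 0$ (since $D_1(\rho_j)=0$), excluding the possibility that any $x_i$ coincides with some $\xi_j^\pm$. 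The divisor's support being disjoint from $\emme$, the non-squarefree version of Lemma~\ref{lem:pre1} reviewed in Section~\ref{sec:nonsquarefreeGeo} implies that its image in $G$ is trivial, whence $\delta^n = [x_1]+\dotsb+[x_l] \in W_l$.

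For the ``only if'' direction, pick a representative $\delta^n = [x_1]+\dotsb+[x_l]$ with $x_i \in \Cv \setminus S$. Since $\delta^n \notin W_{l-1}$ and $\delta^{n-1}\notin W_{l-1}$, none of the $x_i$ can be $\infty_+$ or $\infty_-$. The (unique up to scalar) rational function with divisor $(n-l)(\infty_+) - n(\infty_-) + \sum_i (x_i)$ is regular on the affine part and has trivial class in $G$, so by Lemma~\ref{lem:pre1} it takes the form $p(t)-uD_1(t)q(t)$. Comparison with its Galois conjugate yields $\deg p = n$, $\deg q = n-d$, and vanishing order $\deg q + d - l$ at $\infty_+$. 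Lemma~\ref{lem:pre3} writes $(p,q) = (rP_n, rQ_n)$ for some genuine convergent $(P_n,Q_n)$ and polynomial $r$; if $\deg r \ge 1$, applying the already-proved forward direction to $(P_n,Q_n)$ would place $\delta^{n-1}$ in $W_{l-1}$, contradicting the hypothesis, so $r$ is constant. Condition (iii) comes for free: $x_i \notin S$ forces $p(\rho_j)\neq 0$ for every $j$.

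To close the equivalence, the forward direction still demands $\delta^n, \delta^{n-1} \notin W_{l-1}$; mirroring Step~III of the proof of Theorem~\ref{thm:thmpartialquotwl}, either inclusion would produce a competing convergent of matching degree but with strictly larger order of vanishing at $\infty_+$, contradicting the uniqueness in Lemma~\ref{lem:pre3}. The one point requiring care, and the main obstacle relative to the squarefree case, is confirming that Lemma~\ref{lem:pre1} in its non-squarefree form really detects divisibility by the multiplicity-weighted polynomial $D_1$ to the correct order through the $\Ga$-factors of $G$; this is precisely what the explicit computations of $\dplus^k(\dplus f/f)$ at the end of Section~\ref{sec:nonsquarefreeGenJac} and the proof sketch at the start of Section~\ref{sec:nonsquarefreeGeo} accomplish, so no further geometric input is required.
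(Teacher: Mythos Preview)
Your proposal is correct and follows essentially the same three-step structure as the paper's proof. One small point you glossed over: when, in the ``only if'' direction, you factor $(p,q)=(rP_n,rQ_n)$ via Lemma~\ref{lem:pre3} and then invoke the forward direction on $(P_n,Q_n)$, that forward direction requires condition (iii) for $(P_n,Q_n)$ as well; the paper makes this explicit (``$P(t)$ is relatively prime with $D_1(t)$, since $P(t)r(t)$ is''), and indeed it follows immediately from the fact that $p=rP_n$ already satisfies (iii) because the $x_i$ avoid $S$.
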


\begin{proof}

Step I: we first prove that if we have convergents satisfying the properties above, then $\delta^n \in W_l$.\\

Suppose $(p(t),q(t))$ are convergents with such properties. Then:
\begin{equation*}
\text{ div}(p(t)-uD_1(t)q(t))=(\text{deg }q(t) + d-l)(\infty_+)-k(\infty_-)+\sum_{i=1}^{k-\text{deg }q(t)-d+l} (x_i)
\end{equation*}

But $k=n$ by summing with its conjugate and $\deg p(t)=\deg q(t)+d$ so the latter equality becomes:
\begin{equation*}
\text{ div}(p(t)-uD_1(t)q(t))=(n-l)(\infty_+)-n(\infty_-)+\sum_{i=1}^{l} (x_i)
\end{equation*}

Since $p(t)$ and $D_1(t)$ are relatively prime, none of the $x_i$ belongs to $S$. Then, transposing this equality in $G$, the left hand side is zero by Lemma \ref{lem:pre1}, so we have:
\begin{equation*}
\delta^n = [x_1]+\dotsb+[x_l] \in W_l
\end{equation*}

Step II: we prove that if $\delta^n \in W_l$, then we can either find suitable convergents or we have that $\delta^n \in W_{l-1}$ or $\delta^{n-1} \in W_{l-1}$.\\

Let $\delta^n \in W_l$, so that $\delta^n=[x_1]+\dotsb+[x_l]$ with the $x_i \in \Cv \dif S$. We take the rational function with divisor
\begin{equation*}
(n-l)(\infty_+)-n(\infty_-)+\sum_{i=1}^{l} (x_i)
\end{equation*}

(notice that this can be done because we are in the genus zero case). The $x_i$ are neither $\infty_+$ nor $\infty_-$ provided that $\delta^n \not \in W_{l-1}$ and $\delta^{n-1} \not \in W_{l-1}$. Such function is regular on the affine part of $\mathcal{C}$ and its divisor maps to zero in $G$, so, using the Lemma \ref{lem:pre1}, it is of the form $p(t)-uD_1(t)q(t)$. As before, by considering the conjugate, we observe that $\deg p(t)=n$, $\deg q(t)=n-d$ and $p(t)-uD_1(t)q(t)$ vanishes at $\infty_+$ with order $\deg q(t)+d-l$. By Lemma \ref{lem:pre3}, $(p(t),q(t))$ are of the form $(P(t)r(t),Q(t)r(t))$ for convergents $(P(t),Q(t))$ and some polynomial $r(t)$ of degree $h'$. Notice that $P(t)$ is relatively prime with $D_1(t)$, since $P(t)r(t)$ is. But then, by Step I, the existence of the convergents $(P(t),Q(t))$ implies that $\delta^{n-h'} \in W_{l-h'}$. If $h' \ge 1$, then $\delta^{n-1} \in W_{l-1}$. If $h=0$, then $(p(t),q(t))$ are the required convergents.\\

Step III: we combine Step I and Step II to prove that, in the situation of Step I, actually $\delta^n \not \in W_{l-1}$ and $\delta^{n-1} \not \in W_{l-1}$.\\

Suppose that $\delta^n \in W_{l-1}$. Then we can find convergents $(P(t),Q(t))$ for $\sqrt{D(t)}$ such that $\deg P(t)=n, \deg Q(t)=n-d$ and $P(t)-Q(t) \sqrt{D(t)}$ vanishes with order $n-l+1$. But then $(p(t),q(t))$ would not be convergents: they should be equal to $(P(t),Q(t))$ because they have the same degree, but they give a different order of vanishing.\\

Suppose that $\delta^{n-1} \in W_{l-1}$. Then we can find convergents $(P(t),Q(t))$ for $\sqrt{D(t)}$ such that $\deg P(t)=n-1, \deg Q(t)=n-1-d$ and $P(t)-Q(t) \sqrt{D(t)}$ vanishes with order $n-1-l+1=n-l$. In particular, the partial quotient corresponding to $(P(t),Q(t))$ has degree $d+1-l \ge 2$, hence there can not be a partial quotient $(p(t),q(t))$ where $\deg p(t)=n$.
\end{proof}

\section*{Acknowledgement}
We thank Umberto Zannier for suggesting us this problem and for many precious comments.

We thank the Centro di Ricerca Matematica Ennio de Giorgi and the INdAM research group GNSAGA for financial support.

\begin{bibdiv}
\begin{biblist}

\bib{ChapterAlpbach}{article}{
	author = {Laura Capuano and Peter Jossen and Christina Karolus and Francesco Veneziano},
	title = {Hyperelliptic Continued Fractions and Generalized {Jacobians}},
	subtitle = {Minicourse Given by {Umberto} {Zannier}},
	note = {Chapter 3 in Arithmetic and Geometry: Ten Years in Alpbach. Annals of Mathematics Studies 202, Princeton University Press. Ed. Clemens Fuchs and Gisbert W{\"u}stholz.},
	series = {Annals of Mathematics Studies},
	chapter = {3},
	pages = {56--101},
    year = {2019},
    url = {http://www.jstor.org/stable/j.ctvfrxrcd.6}
}

\bib{Eu1767}{article}{
    author={Euler, Leonhard},
    title={De usu novi algorithmi in problemate pelliano solvendo},
    journal={Novi Commentarii acad. sci. Petropol.},
    volume={11},
    date={1767},
    pages={29--66},
}

\bib{PT00}{article}{
    author={van der Poorten, Alfred J.},
    author={Tran, Xuan C.},
    title={Quasi-elliptic integrals and periodic continued fractions},
    journal={Monatsch. Math.},
    volume={131},
    date={2000},
    pages={155--169},
}

\bib{serre}{book}{
   author={Serre, Jean-Pierre},
   title={Algebraic groups and class fields},
   series={Graduate Texts in Mathematics},
   volume={117},
   note={Translated from the French},
   publisher={Springer-Verlag, New York},
   date={1988},
   pages={x+207},
   isbn={0-387-96648-X},
   review={\MR{918564}},
   doi={10.1007/978-1-4612-1035-1},
}

\bib{zannier}{article}{
   author={Zannier, Umberto},
   title={Hyperelliptic continued fractions and generalized Jacobians},
   journal={Amer. J. Math.},
   volume={141},
   date={2019},
   number={1},
   pages={1--40},
   issn={0002-9327},
   review={\MR{3904765}},
   doi={10.1353/ajm.2019.0000},
}

\end{biblist}
\end{bibdiv}

\end{document}